\documentclass[pdflatex,iicol,sn-mathphys-num]{sn-jnl}

\usepackage{graphicx}%
\usepackage{multirow}%
\usepackage{amsmath,amssymb,amsfonts}%
\usepackage{amsthm}%
\usepackage{mathrsfs}%
\usepackage[title]{appendix}%
\usepackage{xcolor}%
\usepackage{textcomp}%
\usepackage{manyfoot}%
\usepackage{booktabs}%
\usepackage{algorithm}%
\usepackage{algorithmicx}%
\usepackage{algpseudocode}%
\usepackage{subfigure}%
\usepackage{listings}%
\usepackage{bm}

\usepackage[english]{babel}
\usepackage[round,sort]{natbib}
 
\usepackage{blkarray}
\usepackage{bbm}

\usepackage{amsmath}  
\usepackage{etoolbox} 




\theoremstyle{thmstyleone}%
\newtheorem{theorem}{Theorem}

\newtheorem{definition}{Definition}[section]
\newtheorem{example}{Example}[section]

\newtheorem{lemma}{Lemma}[section]

\newtheorem{proposition}{Proposition}[section]
\newtheorem{remark}{Remark}[section]

\newtheorem{assumption}{Assumption}[section]


\theoremstyle{thmstyletwo}%
\theoremstyle{thmstylethree}%

\DeclareMathOperator{\range}{Range}

\DeclareMathOperator{\tr}{Tr}

\newcommand{\MTc}[1]{\textcolor{red}{}}
\newcommand{\SKc}[1]{\textcolor{red}{}}

\newcommand{\de}{\mathrm{d}}
\usepackage{dsfont}

\newcommand{\diag}{\text{diag}}


\raggedbottom

\begin{document}

\title{Truss topology design under harmonic loads: Peak power minimization with semidefinite programming}

\author[1]{Shenyuan Ma}\email{shenyma@fel.cvut.cz}

\author[1]{Jakub Mare\v{c}ek, Vyacheslav Kungurtsev}

\author[2]{Marek Tyburec}

\affil[1]{\orgdiv{Czech Technical University in Prague}, \orgname{Faculty of Electrical Engineering, Department of Computer Science}, \orgaddress{Thakurova 7, 16629 Prague 6, \country{Czech Republic}}}

\affil[2]{\orgdiv{Czech Technical University in Prague}, \orgname{Faculty of Civil Engineering, Department of Mechanics}, \orgaddress{Thakurova 7, 16629 Prague 6, \country{Czech Republic}}}
 
\abstract{Designing lightweight yet stiff structures that can withstand vibrations is a crucial task in structural optimization.  Here, we present a novel framework for truss topology optimization under undamped harmonic oscillations. Our approach minimizes the peak power of the structure under harmonic loads, overcoming the limitations of single-frequency and in-phase assumptions found in previous methods. For this, we leverage the concept of semidefinite representable (SDr) functions, demonstrating that while compliance readily conforms to an SDr representation, peak power requires a derivation based on the non-negativity of trigonometric functions.  Finally, we introduce convex relaxations for the minimization problem and provide promising computational results.}

\keywords{semidefinite programming relaxation, truss topology optimization, peak power, harmonic oscillations, positive trigonometric polynomial}
\maketitle

\section*{Acknowledgements}
The authors gratefully acknowledge the support of the Czech Science Foundation (grant number 22-15524S).

\section{Introduction}
The design of lightweight yet stiff vibrating structures has been a long-standing objective in structural optimization \citep{Kang2006}. Here, we consider the optimization of a structural design that is subject to forces in the form of harmonic oscillations, a topic of significance in a range of applications. These include rotating machinery in household appliances, construction machinery for cars and ships \citep{Liu2015rev}, and wind turbine support structures \citep{Liu2015}. In this study, we focus on optimizing truss structures, leveraging their high stiffness-weight ratio \citep{Liu2008,Tyburec2019}.

Structural optimization was pioneered in Michell's seminal work \citep{Michell_1904}, which demonstrated that the optimal material distribution for trusses, under a single loading condition, aligns with the principal stresses. Because the principal stresses are generally not straight, optimum designs may involve an infinite number of bars. To circumvent this, \citet{Dorn1964} discretized the design domain into the so-called ground structure, containing a fixed and finite number of optimized elements.

Relying on ground structure discretization, many theoretical and applied results have been presented in the literature. These include convex formulations for truss topology optimization under single \citep{Dorn1964} or multiple \citep{Achtziger1992, Vandenberghe_1996,Lobo_1998,ben-tal_lectures_2001} loading conditions, or possibly under worst-case loading \citep{BenTal1997}. In dynamics, it has been shown that the constraint for the lower bound on the fundamental free-vibration eigenfrequencies is convex \citep{Ohsaki1999}, also allowing for an efficient maximization of the fundamental free-vibration eigenfrequency \citep{achtziger_structural_2008,Aroztegui2024,nishioka2023}. 

In 2009, \citet{heidari_optimization_2009} observed that, similar to the convex semidefinite programming formulation developed for the static setting \citep{Vandenberghe_1996}, it is also possible to derive a convex semidefinite program for the peak-power minimization under harmonic oscillations. However, the reformulation requires that only a single-frequency in-phase load is present, with the driving frequency strictly below the lowest resonance frequency of the structure itself.

Surveying the more applied literature on continuum topology optimization under harmonic oscillations surprisingly reveals that the same restricting assumptions have been used in this community as well. In particular, the predominant objective is to minimize the dynamic compliance function \citep{Ma1993}, which is physically meaningful only when the driving frequency of the load is below resonance \citep{silva_critical_2019}. Otherwise, the optimization converges to a disconnected material distribution at anti-resonance. To the best of our knowledge, this issue has not yet been resolved.

The second typical assumption of single-frequency loads acting only in phase can be partially justified because it represents the worst-case situation \citep{heidari_optimization_2009}. On the other hand, such designs are hardly optimal for out-of-phase settings, such as unbalanced rotating loads. To the best of our knowledge, the only author who has explored this setting is \citet{Liu2015}, who considered minimization of the energy loss per cycle due to structural damping and minimization of the displacement amplitude. The latter case was handled by aggregating samples over the cycle, with the number of samples balancing computational efficiency with the accuracy of the approximation. 

As was pointed out in \citep{Venini2016}, real-world applications require multiple-frequency loads. In this direction, \citet{Liu2015rev} minimized the integral of displacement amplitudes over a non-uniformly discretized frequency range, whereas \citet{Zhang2015} introduced an aggregation scheme to minimize the worst-case dynamic compliance for multiple frequencies. However, we are not aware of any publication that considers multiple-frequency harmonic loads acting on a structure concurrently.

\subsection{Aims and contributions}
Inspired by the initial results of \citep{heidari_optimization_2009}, in this study, we develop a unifying framework for the minimization of compliance and peak power functions, while avoiding the assumptions for single-frequency and in-phase loads.

Our procedure inherently relies on the notion of semidefinite representable (SDr) functions \citep[Lecture 4.2]{ben-tal_lectures_2001}, which are convex functions whose epigraph can be represented by the projection of a linear matrix inequality (LMI) feasibility set, the so-called LMI shadow. Minimization of such functions under linear constraints can be equivalently reformulated into linear semidefinite programming (SDP), and hence convex, problems. 

We show that for compliance such representation is immediate due to the linearity. For the peak power, however, the SDP representation is not trivial; it relies on the positivity certificate of the trigonometric polynomials. Such certificates have already been extensively studied in the signal processing community for filter design \citep{dumitrescu_positive_2017}. 

Using the notion of a SDr function, we propose convex relaxations of the minimization involving an SDr function and the equilibrium equation as a constraint. Minimization problems subject to an equilibrium equation are, in general, non-convex because of their complementarity nature. In compliance minimization, however, one can exploit the problem structure and eliminate state variables from the minimization in order to obtain a linear SDP problem. Because this is not possible for peak power minimization in general, we propose a convex relaxation with penalization.

For this relaxation, we discuss its link to the Lagrange relaxation of the constrained minimization problem, showing that our convex relaxation corresponds to the Lagrange relaxation with a penalty coefficient equal to $0$. Based on this observation, we adopt a convex relaxation with a non-zero penalty term to generate high-quality sub-optimal solutions.

First, we use the convex relaxation method for single-frequency harmonic loads. The inherent novelty there is allowing the forces to act out of phase. Numerical experiments show that although the relaxation alone is not tight, a penalization term added to the objective function secures convergence to a suboptimal solution that is a feasible point of the original problem. Subsequently, we study the peak power function under loads with multiple harmonics formulated as an SDr.

This article is structured as follows. In Section \ref{sec:notation}, we present the notation and definitions of terms used throughout the paper. In Section \ref{sec:formulation}, we formalize the optimization problem and also present a relaxation procedure to solve the problem in Section \ref{sec:minSDR}. Section \ref{sec:examples} then illustrates our method with five examples. Finally, Section \ref{sec:discussion} summarizes our developments and provides an outlook on related future research.

Throughout the paper, we use italic letter such as $x$ for a real or complex scalar value and boldface letter such as $\mathbf{x}$ for a vector or a matrix of finite dimension.

\section{Notations}\label{sec:notation}
In this section, we introduce the necessary notation. In particular, $\mathbb{R}$ and $\mathbb{C}$ represent real and complex fields, respectively. $\mathbb{T}$ denotes the set of complex numbers of modulus $1$, that is, the set of all complex numbers $z=x + iy$ for which it holds that $\lvert z \rvert = \sqrt{x^2 + y^2}=1$ and $i^2 = -1$. $\mathbb{S}^N$ is the set of $N\times N$ symmetric or Hermitian matrices according to the context. For a complex number $z$, $\overline{z}$ is its complex conjugate, while for a complex vector or matrix, $\mathbf{x}^*=\overline{\mathbf{x}}^T$ stands for the transpose conjugate.

Consider $\mathbf{A}\in\mathbb{R}^{n\times m}$ and assume that $\mathbf A$ has rank $r\leq \min\{n,m\}$. There exists a singular value decomposition $\mathbf{A}=\mathbf{U}\mathbf{\Sigma}\mathbf{V}^T$ where $\mathbf\Sigma$ is $r\times r$ diagonal with positive elements and where $\mathbf{U}\in\mathbb{R}^{n\times r}$ and $\mathbf{V}\in\mathbb{R}^{m\times r}$ whose respective column vectors form an orthonormal family of $\mathbb{R}^n$ (respectively $\mathbb{R}^m$). The Moore-Penrose pseudo inverse $\mathbf{A}^\dagger$ of $\mathbf{A}$, see, e.g., \citep{boyd_convex_2004}, is
\begin{equation}
    \mathbf{A}^\dagger = \mathbf{V}\mathbf{\Sigma}^{-1}\mathbf{U}^T.
\end{equation}
If $m=n$ and $\mathbf{A}$ is invertible then the pseudo-inverse coincides with the usual inverse $\mathbf{A}^\dagger=\mathbf{A}^{-1}$.

We define the Frobenius scalar product $\langle \mathbf{A},\mathbf{B}\rangle=\tr\{\mathbf{A}^T\mathbf{B}\}$, where $\mathbf{A},\mathbf{B}\in\mathbb{S}^N$. When $\mathbf{A},\mathbf{B}$ are real, it holds that $\tr\{\mathbf{A}^T\mathbf{B}\}=\sum_{i,j}A_{i,j}B_{i,j}$, and $\tr\{\mathbf{A}^T\mathbf{B}\}=\sum_{i,j}\overline{A_{i,j}}B_{i,j}$ applies to the Hermitian case. Thus, the set $\mathbb{S}^N$ with the Frobenius scalar product can be identified as a Euclidean vector space.

Let $\mathbb{S}_+^N$ be the set of positive semidefinite (PSD) matrices and define the partial order $\mathbf{A}\succeq 0 \iff \mathbf{A}\in\mathbb{S}_+^N$. A real symmetric (resp. Hermitian) matrix $\mathbf{A}$ is PSD iff $\mathbf{x}^T\mathbf{A}\mathbf{x}\geq 0,\forall \mathbf{x}$ (resp. $\mathbf{x}^*\mathbf{A}\mathbf{x}\geq 0$). 

For a Hermitian matrix $\mathbf{A}$, let $\Re{\mathbf{A}}$ and $\Im{\mathbf{A}}$ denote the matrices obtained by taking the real and imaginary part of $\mathbf{A}$'s entries element-wise. Then, 
\begin{equation}
    \mathbf{A}\succeq 0\iff\begin{pmatrix}\Re{\mathbf{A}}&-\Im{\mathbf{A}}\\\Im{\mathbf{A}}&\Re{\mathbf{A}}\end{pmatrix}\succeq 0.
\end{equation}
The latter matrix is real PSD. Thus, any PSD Hermitian matrix is equivalent to a real symmetric PSD matrix of larger size.

Let $\mathbf{F}$ be an affine map from $\mathbb{R}^n$ (or $\mathbb{C}^n$) to the set of real symmetric matrices $\mathbb{S}^N$, a linear matrix inequality (LMI) is defined as $\mathbf{F}(\mathbf{x})\succeq 0$. The feasible set $\{\mathbf{x} \;\vert\; \mathbf{F}(\mathbf{x})\succeq 0\}$ of the LMI is called a spectrahedron and it is a convex and closed set; see, e.g., \citep{boyd_convex_2004}.

Let $\mathbf{f}:[0,T]\rightarrow \mathbb{R}^{n}$ be a vector valued function from $[0,T]$ to $\mathbb{R}^{n}$, the notation $\dot{\mathbf{f}}$ and $\ddot{\mathbf{f}}$ stand for the first $\frac{\de \mathbf{f}}{\de t}$ and second derivative $\frac{\de ^2\mathbf{f}}{\de t^2}$ of $\mathbf{f}$.

\section{Optimization problem formulation}\label{sec:formulation}

Here, we consider optimization problems occurring in the context of topology optimization of discrete structures with time-varying loads. 

\subsection{Variables of the optimization problem} We can distinguish two types of variables:

\begin{itemize}
    \item The \textit{design variables} $\mathbf{a}\in\mathbb{R}^m$ represent the vector of parameters of the structure, e.g., cross-section areas, (pseudo)densities, etc.
    \item The \textit{state variables} $\mathbf{u}\in\mathbb{R}^n$ describe the physics, e.g., displacements, velocities, temperature, etc.
\end{itemize}

\subsection{Periodic time varying loads}
Let $\mathbf{f}$ represent the load of the structure.
We assume here that the time-varying load $\mathbf{f}$ contains $N$ harmonic components of base angular frequency $\omega$, described by a sequence of complex vector-valued Fourier coefficients $\mathbf{c}_k(\mathbf{f})\in\mathbb{C}^d,\forall k=-N,\dots,N$. $\mathbf{f}$ is thus a periodic function:
\begin{equation}
	\mathbf{f}(t)=\sum_{k=-N}^{N}\mathbf{c}_k(\mathbf{f})e^{ik\omega t}.
\end{equation}
$\mathbf{f}$ is real-valued, implying that the complex Fourier coefficients of $\mathbf{f}$ satisfy the symmetric condition $\mathbf{c}_{-k}(\mathbf{f})=\overline{\mathbf{c}_k(\mathbf{f})}$, where the complex conjugation is taken entry-wise. We also assume that there is no constant component in the load, thus $\mathbf{c}_0(\mathbf{f})$. 

\subsection{Equilibrium equation}
By the finite element discretization, the nodal velocity $\mathbf{v}$ is the solution of the ordinary differential equation (ODE)
\begin{equation}
	\mathbf{M}(\mathbf{a})\ddot{\mathbf{v}}+\mathbf{K(a)v}=\dot{\mathbf{f}}.
\end{equation}

For any design variable $\mathbf{a}$, we can define an ordinary differential operator $\mathbf{L}(\mathbf{a})$ such that for each vector-valued function $\mathbf{v}$, there is:
\begin{equation}
    \mathbf{L}(\mathbf{a}):\mathbf{v}(t)\mapsto \mathbf{M(a)}\ddot{\mathbf{v}}+\mathbf{K(a)v}. 
\end{equation}
The solution of the previous system of ODE can now be consicely written as $\mathbf{L(a)v}=\dot{\mathbf{f}}$, to which we refer by the equilibrium equation.

The steady-state solution of the ODE is a periodic function with $N$ harmonic components as well, satisfying
\begin{equation}
	(-k^2\omega ^2\mathbf{M(a)}+\mathbf{K(a)})\mathbf{c}_k(\mathbf{v})=ik\omega \mathbf{c}_k(\mathbf{f}),\forall k.
\end{equation}
The equilibrium equation relates the state variables and design variables:
\begin{equation}
    \begin{split}
        &\mathbf{L(a)v}=\dot{\mathbf{f}} \\
        \iff& (-k^2\omega ^2\mathbf{M(a)}+\mathbf{K(a)})\mathbf{c}_k(\mathbf{v})=ik\omega \mathbf{c}_k(\mathbf{f}),\forall k.
    \end{split}
\end{equation}%

Using the equilibrium equation as a constraint in the minimization \eqref{eqn:general_minimization}, we automatically eliminate the values of $\mathbf{a}$ for which the design does not allow to carry the load, i.e. $\mathbf{f}\notin\range{\mathbf{L(a)}}$.

The matrix $\mathbf{M(a)}$ and $\mathbf{K(a)}$ are called the mass and the stiffness matrix. They depend linearly on the design variables $a$ in the current study,
\begin{equation}
	\begin{split}
		\mathbf{M(a)}=\sum_{i=1}^m a_i\mathbf{M}_i,\\
		\mathbf{K(a)}=\sum_{i=1}^m a_i\mathbf{K}_i,
	\end{split}
\end{equation}
where $\mathbf{M}_i$ and $\mathbf{K}_i$ are PSD mass and stiffness matrices of individual finite elements. 

We assume that the highest harmonic frequency $N\omega$ is less than or equal to the smallest non-singular free-vibration angular eigenfrequency. It is shown in \citep{achtziger_structural_2008} that this is equivalent to
\begin{equation}
    -N^2\omega^2\mathbf{M(a)}+\mathbf{K(a)}\succeq 0.
\end{equation}

For the sake of conciseness, we use the notation of dynamic stiffness matrix $\mathbf{K}_\lambda(\mathbf{a}) = -\lambda^2\mathbf{M}(a) + \mathbf{K(a)}$ for any real number $\lambda$.

\subsection{Further constraints on design variables} 
With $\mathbf{a}\in\mathbb{R}^m$ representing the vector of the cross-section areas of individual truss elements, we collect a number of constraints on the design variables $\mathbf{a}$:
\begin{itemize}
	\item $a_i\geq 0, \forall i=1,\dots,m$, securing non-negative cross-section areas,
	\item $m-\mathbf{q}^T\mathbf{a}\ge 0$, providing an upper bound for structural mass, with $\mathbf{q}\in\mathbb{R}^m$ being the constant vector of the element weight contributions,
	\item\label{constraint_fv} $\mathbf{K}_\lambda(\mathbf{a})\succeq 0$, ensuring that the smallest non-singular free-vibration angular eigenfrequency is at least $\lambda$, see, e.g., \citep{achtziger_structural_2008}.
\end{itemize}

\subsection{Objective function} In this study, we consider the peak power $p[\mathbf{u}]$ as an objective function. It is the maximum value of the instant power delivered to the structure by the load $\mathbf{f}(t)$, i.e.,
\begin{equation}
	p[\mathbf{c(v)}]=\underset{t\in\left[0,\frac{2\pi}{\omega }\right]}{\max}\{\left\lvert \mathbf{f}(t)^T\mathbf{v}(t)\right\lvert\}.
\end{equation}

\subsection{Peak power minimization} To summarize, we aim at solving the following minimization problem:
\begin{equation}\label{eqn:peak_power_min}
	\begin{split}
		\underset{\mathbf{a},\mathbf{c(v)}}{\min} ~&~ p[\mathbf{c(v)}]\\
		\text{ s.t } & \left\{\begin{array}{l}
			a_i\geq0, \mathbf{a}^T\mathbf{q}\leq m,\\
            \mathbf{K}_{N\omega}(\mathbf{a})\succeq 0\\
			\mathbf{K}_{k\omega}(\mathbf{a})\mathbf{c}_k(\mathbf{v})=ik\omega \mathbf{c}_k(\mathbf{f}),\forall k
		\end{array}\right.
	\end{split}
\end{equation}
This is a particular instance of a broader class of problems

\begin{equation}\label{eqn:general_minimization}
	\begin{split}
		\underset{\mathbf{a},\mathbf{u}}{\min} ~&~ p[\mathbf{u}]\\
		\text{ s.t } & \left\{\begin{array}{l}
			\mathbf{G(a)}\succeq 0\\
			\mathbf{L(a)u=f}
		\end{array}\right.
	\end{split}
\end{equation}
that build a unifying framework for compliance and peak power minimizations, namely, the minimization of a semi-definite representable (SDr) function under the equilibrium condition. By the equilibirum condition, we understand that the state variable $\mathbf{u}$ and design variable $\mathbf{a}$ are involved in the optimization problem and are constrained by the equilibrium condition $\mathbf{L(a)u=f}$.

\section{Minimization of SDr function under equilibrium}\label{sec:minSDR}
\MTc{There should be an intro paragraph stating what is inside this section}

Let us develop the framework of minimization of a semidefinite representable (SDr) function under the equilibrium
condition in more detail.

\subsection{Schur's complement Lemma}
Schur's complement of a block matrix is an essential tool to transform the non-linear constraints of a special structure into LMI constraints. Let us consider a block matrix $\mathbf{M}=\begin{pmatrix}\mathbf{A} & \mathbf{B}^T \\ \mathbf{B} & \mathbf{C}\end{pmatrix}$. Schur's complement Lemma relates the positive (semi)definiteness of $\mathbf{M}$ to the positive (semi)definiteness of its blocks. Let us first assume that the block $\mathbf{C}$ is invertible.
\begin{lemma}[Schur's complement Lemma with $\mathbf{C}$ invertible \citep{wolkowicz_handbook_2000}]
	Consider the block matrix $\mathbf{M}$ as defined above with the matrix $\mathbf{C}$ invertible. Then, $\mathbf{M}\succeq 0$ if and only if the following two conditions hold:
	\begin{equation}
		\left\{\begin{array}{l}
		\mathbf{C}\succeq 0,\\
		\mathbf{A} - \mathbf{B}^T\mathbf{C}^{-1}\mathbf{B}\succeq 0.
		\end{array}\right.
	\end{equation}
\end{lemma}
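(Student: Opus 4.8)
The plan is to reduce the block matrix to a block-diagonal form by a congruence transformation, so that positive semidefiniteness can be read off blockwise. Since $\mathbf{C}$ is invertible and symmetric (so that $\mathbf{C}^{-1}$ is symmetric as well), I would first exhibit the block $\mathbf{L}\mathbf{D}\mathbf{L}^T$-type factorization
\begin{equation*}
    \mathbf{M} = \mathbf{P}\begin{pmatrix}\mathbf{A}-\mathbf{B}^T\mathbf{C}^{-1}\mathbf{B} & \mathbf{0}\\ \mathbf{0} & \mathbf{C}\end{pmatrix}\mathbf{P}^T, \qquad \mathbf{P}=\begin{pmatrix}\mathbf{I} & \mathbf{B}^T\mathbf{C}^{-1}\\ \mathbf{0} & \mathbf{I}\end{pmatrix},
\end{equation*}
which is verified by a direct block multiplication; the symmetry of $\mathbf{C}^{-1}$ is precisely what makes the right factor equal to $\mathbf{P}^T$. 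The Schur complement $\mathbf{A}-\mathbf{B}^T\mathbf{C}^{-1}\mathbf{B}$ emerges exactly as the top-left block once the off-diagonal coupling is eliminated.

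Next, I would note that $\mathbf{P}$ is invertible, being block upper triangular with identity diagonal blocks. A congruence $\mathbf{M}\mapsto\mathbf{P}^{-1}\mathbf{M}\mathbf{P}^{-T}$ by an invertible matrix preserves the signs of the eigenvalues (Sylvester's law of inertia), and in particular preserves positive semidefiniteness. Hence $\mathbf{M}\succeq 0$ if and only if the middle block-diagonal matrix is PSD. Finally, a block-diagonal symmetric matrix is PSD if and only if each diagonal block is PSD, which yields precisely the two stated conditions $\mathbf{C}\succeq 0$ and $\mathbf{A}-\mathbf{B}^T\mathbf{C}^{-1}\mathbf{B}\succeq 0$.

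An equivalent elementary route avoids the factorization and argues directly with the quadratic form: for $\mathbf{z}=(\mathbf{x}^T,\mathbf{y}^T)^T$ one completes the square in $\mathbf{y}$ to obtain
\begin{equation*}
    \mathbf{z}^T\mathbf{M}\mathbf{z} = \mathbf{x}^T(\mathbf{A}-\mathbf{B}^T\mathbf{C}^{-1}\mathbf{B})\mathbf{x} + (\mathbf{y}+\mathbf{C}^{-1}\mathbf{B}\mathbf{x})^T\mathbf{C}(\mathbf{y}+\mathbf{C}^{-1}\mathbf{B}\mathbf{x}).
\end{equation*}
The ``if'' direction is then immediate, since both terms are nonnegative whenever the two conditions hold. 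For ``only if'', setting $\mathbf{x}=\mathbf{0}$ forces $\mathbf{C}\succeq 0$, and for arbitrary $\mathbf{x}$ the choice $\mathbf{y}=-\mathbf{C}^{-1}\mathbf{B}\mathbf{x}$ annihilates the second term and forces $\mathbf{A}-\mathbf{B}^T\mathbf{C}^{-1}\mathbf{B}\succeq 0$.

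I do not anticipate a genuine obstacle here, as this is a classical identity; the only point requiring care is the ``only if'' direction, where one must make the substitution $\mathbf{y}=-\mathbf{C}^{-1}\mathbf{B}\mathbf{x}$ (which uses invertibility of $\mathbf{C}$ crucially) to decouple the two quadratic contributions, rather than attempting to compare them directly. I would favour the congruence argument as the primary proof, since it is coordinate-free and generalizes cleanly to the singular-$\mathbf{C}$ case treated later.
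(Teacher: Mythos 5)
Your proof is correct. Be aware, though, that the paper contains no proof of this lemma at all: it is quoted as a known result with a citation to \citep{wolkowicz_handbook_2000}, so there is no internal argument to compare yours against, and your write-up is a genuine addition rather than a variant. Of your two routes, both are sound and classical. The congruence route rests on the identity
\begin{equation*}
    \mathbf{M}=\mathbf{P}\begin{pmatrix}\mathbf{A}-\mathbf{B}^T\mathbf{C}^{-1}\mathbf{B} & \mathbf{0}\\ \mathbf{0} & \mathbf{C}\end{pmatrix}\mathbf{P}^T, \qquad \mathbf{P}=\begin{pmatrix}\mathbf{I} & \mathbf{B}^T\mathbf{C}^{-1}\\ \mathbf{0} & \mathbf{I}\end{pmatrix},
\end{equation*}
which your block multiplication verifies (and which indeed uses the symmetry of $\mathbf{C}$ to make the right factor equal to $\mathbf{P}^T$); since $\mathbf{P}$ is invertible, congruence preserves semidefiniteness, and a block-diagonal matrix is PSD iff its blocks are, giving both directions at once. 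The completing-the-square route is the same identity read through the quadratic form, and your handling of the ``only if'' direction --- take $\mathbf{x}=\mathbf{0}$ to force $\mathbf{C}\succeq 0$, then $\mathbf{y}=-\mathbf{C}^{-1}\mathbf{B}\mathbf{x}$ to isolate the Schur complement --- is exactly the step where invertibility of $\mathbf{C}$ is essential. Your closing preference for the congruence argument is also well judged in this paper's context: it is the formulation that degrades gracefully to singular $\mathbf{C}$, where the pseudo-inverse $\mathbf{C}^\dagger$ and the range condition $(\mathbf{I}-\mathbf{C}\mathbf{C}^\dagger)\mathbf{B}=\mathbf{0}$ replace invertibility, i.e., precisely the Generalized Schur complement Lemma that the paper states (again without proof, citing \citep{boyd_convex_2004}) immediately afterwards.
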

In the case when $\mathbf{C}$ is not invertible, we can use the Moore-Penrose pseudo-inverse instead of the inverse. However, additional conditions for $\mathbf{B}$ are now required, see \citep[Appendix A.5]{boyd_convex_2004}.
\begin{lemma}[Generalized Schur's complement Lemma \citep{boyd_convex_2004}]
	Consider a block matrix $\mathbf{M}$. $\mathbf{M}\succeq 0$ iff the following conditions hold:
	\begin{equation}
		\left\{\begin{array}{l}
		\mathbf{C}\succeq 0,\\
		(\mathbf{I}-\mathbf{C}\mathbf{C}^\dagger)\mathbf{B}=0,\\
		\mathbf{A} - \mathbf{B}^T\mathbf{C}^{\dagger}\mathbf{B}\succeq 0.
		\end{array}\right.
	\end{equation}
\end{lemma}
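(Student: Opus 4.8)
The plan is to reduce $\mathbf{M}$ to block-diagonal form by a congruence transformation, after first giving the middle condition its natural geometric meaning. Since $\mathbf{C}$ is symmetric, $\mathbf{C}\mathbf{C}^\dagger$ is the orthogonal projector onto $\range{\mathbf{C}}$, so the identity $(\mathbf{I}-\mathbf{C}\mathbf{C}^\dagger)\mathbf{B}=0$ states exactly that every column of $\mathbf{B}$ lies in $\range{\mathbf{C}}$, i.e. $\range{\mathbf{B}}\subseteq\range{\mathbf{C}}$, equivalently $\kernel{\mathbf{C}}\subseteq\kernel{\mathbf{B}^T}$. I would treat this range condition as the pivot of the whole argument: once it holds, the pseudo-inverse behaves just as the inverse did in the previous lemma, and the only genuine work is to show that $\mathbf{M}\succeq 0$ forces it.

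For the \emph{if} direction I would assume all three conditions and exhibit the explicit factorization
\begin{equation}
\mathbf{M}=\mathbf{P}^T\begin{pmatrix}\mathbf{A}-\mathbf{B}^T\mathbf{C}^\dagger\mathbf{B} & 0\\ 0 & \mathbf{C}\end{pmatrix}\mathbf{P},\qquad \mathbf{P}=\begin{pmatrix}\mathbf{I} & 0\\ \mathbf{C}^\dagger\mathbf{B} & \mathbf{I}\end{pmatrix}.
\end{equation}
Multiplying out, the off-diagonal blocks reduce to $\mathbf{C}\mathbf{C}^\dagger\mathbf{B}$ and its transpose, which equal $\mathbf{B}$ and $\mathbf{B}^T$ precisely because of the range condition (using also that $\mathbf{C}^\dagger$ is symmetric when $\mathbf{C}$ is); the $(1,1)$ block collapses to $\mathbf{A}$ after applying the pseudo-inverse identity $\mathbf{C}^\dagger\mathbf{C}\mathbf{C}^\dagger=\mathbf{C}^\dagger$ to cancel $\mathbf{B}^T\mathbf{C}^\dagger\mathbf{C}\mathbf{C}^\dagger\mathbf{B}$ against $\mathbf{B}^T\mathbf{C}^\dagger\mathbf{B}$. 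Since $\mathbf{P}$ is block lower triangular with identity diagonal blocks it is invertible, so this is a genuine congruence. As positive semidefiniteness is congruence-invariant and a block-diagonal matrix is PSD iff each diagonal block is, $\mathbf{M}\succeq 0$ becomes equivalent to $\mathbf{C}\succeq 0$ together with $\mathbf{A}-\mathbf{B}^T\mathbf{C}^\dagger\mathbf{B}\succeq 0$.

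For the \emph{only if} direction, assuming $\mathbf{M}\succeq 0$, restricting the quadratic form to vectors $(\mathbf{0},\mathbf{y})$ gives $\mathbf{C}\succeq 0$ at once, and once the range condition is available the factorization above returns the Schur-complement inequality. The step I expect to be the crux is establishing the range condition itself, since it does not follow from a naive congruence. Here I would take any $\mathbf{z}\in\kernel{\mathbf{C}}$ and evaluate the form on $(\mathbf{x},t\mathbf{z})$, which equals $\mathbf{x}^T\mathbf{A}\mathbf{x}+2t\,\mathbf{x}^T\mathbf{B}^T\mathbf{z}$ because the $t^2$ term $\mathbf{z}^T\mathbf{C}\mathbf{z}$ vanishes. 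An expression that is affine rather than quadratic in the free scalar $t$ can remain nonnegative for all $t\in\R$ only if its linear coefficient is zero, which forces $\mathbf{x}^T\mathbf{B}^T\mathbf{z}=0$ for every $\mathbf{x}$, hence $\mathbf{B}^T\mathbf{z}=\mathbf{0}$. Thus $\kernel{\mathbf{C}}\subseteq\kernel{\mathbf{B}^T}$, which is the range condition, and feeding it back into the congruence closes the equivalence.
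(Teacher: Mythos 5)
Your proof is correct and complete. Note first that the paper itself gives no proof of this lemma at all---it is stated as a quoted result with a citation to Boyd and Vandenberghe, Appendix A.5---so there is no in-paper argument to compare against; your write-up stands as a valid self-contained substitute. All three delicate points are handled properly: (i) the identification of $(\mathbf{I}-\mathbf{C}\mathbf{C}^\dagger)\mathbf{B}=0$ with $\range{\mathbf{B}}\subseteq\range{\mathbf{C}}$, equivalently $\kernel{\mathbf{C}}\subseteq\kernel{\mathbf{B}^T}$; (ii) the congruence with $\mathbf{P}=\bigl(\begin{smallmatrix}\mathbf{I} & 0\\ \mathbf{C}^\dagger\mathbf{B} & \mathbf{I}\end{smallmatrix}\bigr)$, whose verification indeed requires exactly the ingredients you name ($\mathbf{C}\mathbf{C}^\dagger\mathbf{B}=\mathbf{B}$ from the range condition, symmetry of $\mathbf{C}^\dagger$ for symmetric $\mathbf{C}$, and $\mathbf{C}^\dagger\mathbf{C}\mathbf{C}^\dagger=\mathbf{C}^\dagger$ for the $(1,1)$ block); and (iii) the only genuinely non-formal step, deducing the range condition from $\mathbf{M}\succeq 0$, which you obtain by evaluating the form at $(\mathbf{x},t\mathbf{z})$ with $\mathbf{z}\in\kernel{\mathbf{C}}$ and observing that a nonconstant affine function of $t$ cannot stay nonnegative. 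For comparison, the proof in the cited reference takes a slightly different route: it computes the partial infimum of the quadratic form over the second block, $\inf_{\mathbf{y}}\{\mathbf{x}^T\mathbf{A}\mathbf{x}+2\mathbf{y}^T\mathbf{B}\mathbf{x}+\mathbf{y}^T\mathbf{C}\mathbf{y}\}$, which for $\mathbf{C}\succeq 0$ equals $\mathbf{x}^T(\mathbf{A}-\mathbf{B}^T\mathbf{C}^\dagger\mathbf{B})\mathbf{x}$ when $\mathbf{B}\mathbf{x}\in\range{\mathbf{C}}$ and $-\infty$ otherwise, so that nonnegativity of $\mathbf{M}$ becomes exactly the three stated conditions. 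The two arguments are equivalent in substance: your congruence makes the algebra fully explicit, while the minimization view makes the appearance of the pseudo-inverse and of the range condition more automatic; either one suffices, and both extend verbatim to the Hermitian case (with $\mathbf{B}^T$ replaced by $\mathbf{B}^*$), which is the form the paper actually uses for the peak-power constraints.
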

The second condition $(\mathbf{I}-\mathbf{CC}^\dagger)\mathbf{B}=0$ means that all the column vectors of $B$ are in the range space of $\mathbf{C}$. The use of Schur's complement Lemma is essential for obtaining equivalent (convex) reformulations of problems under the equilibrium equation $\mathbf{L(a)u=f}$, see \citep{tyburec_global_2021}, for example. The Generalized Schur's complement Lemma also allows us to conclude that $\mathbf{B}$'s columns are in the range space of $\mathbf{C}$, once there exists indeed a matrix $\mathbf{A}$ of an appropriate dimension such that
\begin{equation}
    \begin{pmatrix}\mathbf{A} & \mathbf{B}^T \\ \mathbf{B} & \mathbf{C}\end{pmatrix}\succeq 0.
\end{equation}

The (Generalized) Schur complement Lemma still holds whenever $\mathbf{A}$ and $\mathbf{C}$ are Hermitian and $\mathbf{B}^T$ is replaced by $\mathbf{B}^*$, this property is used to construct the constraints for peak power minimization since we use complex Fourier coefficients.

\subsection{Semidefinite representable function}
In \eqref{eqn:general_minimization}, we deal with the case where $p$ is a semidefinite representable function.
\begin{definition}[SDr function, {\citep[Lecture 4.2]{ben-tal_lectures_2001}}]\label{def:sdprep_1}
Let $p$ be a convex function; it is semidefinite representable (SDr) if and only if its epigraph is an LMI shadow. Namely, there are linear matrix-valued functions $P_j$, $j \in \{0,1,2\}$, such that
	\begin{equation}
        \begin{split}
                &\forall \theta \in \mathbb{R},\forall \mathbf{u}\in\mathbb{R}^n,\theta\geq p[\mathbf{u}]\\
            \iff& \exists \mathbf{w}\in\mathbb{R}^m, \mathbf{P}_0(\theta)+\mathbf{P}_1(\mathbf{u})+\mathbf{P}_2(\mathbf{w})\succeq 0.
        \end{split}
	\end{equation}
\end{definition}
Alternatively, a second type of semidefinite representation can be used. A function whose epigraph admits a semidefinite representation of the second type is also called the SDr function in this work.

\begin{definition}[SDr function, second representation]\label{def:sdprep_2}
Let $p$ be a convex function; it is semidefinite representable (SDr) if and only if there exists $\mathbf{b}\in\mathbb{R}^m$, together with $\mathbf{g}_k\in\mathbb{R}^n$ and $\mathbf{A}_k\in \mathbb{S}^N$ for $k=1,\dots,m$, such that :
\begin{equation}\label{eqn:sdr_second_type}
    \begin{split}
        &\forall \theta\in\mathbb{R}, \forall \mathbf{u}\in\mathbb{R}^n,\theta\geq p[\mathbf{u}]\\
        \iff& \exists \mathbf{Q}\in\mathbb{S}^N_+, b_k\theta+\mathbf{g}_k^T\mathbf{u} = \tr\{\mathbf{A}_k^T\mathbf{Q}\}.
    \end{split}
\end{equation}
\end{definition}

Whenever $p$ is SDr, the minimization problem \eqref{eqn:general_minimization} is, after adding the variables $\theta$ and $\mathbf{w}$, equivalent to
\begin{equation}\label{eqn:SDr_minimization}
	\begin{split}
		\underset{\mathbf{a},\mathbf{u},\theta,\mathbf{w}}{\min} ~&~ \theta\\
		\text{ s.t } & \left\{\begin{array}{l}
			a_i\geq0,m-\mathbf{q}^T\mathbf{a}\geq 0, \mathbf{L(a)}\succeq 0,\\
			\mathbf{L(a)u=f},\\
			\mathbf{P}_0(\theta)+\mathbf{P}_1(\mathbf{u})+\mathbf{P}_2(\mathbf{w})\succeq 0.
		\end{array}\right.
	\end{split}
\end{equation}
If the semidefinite representation of the second type is used, then we obtain the following equivalent optimization problem:
\begin{equation}\label{eqn:SDr_minimization_sec}
	\begin{split}
		\underset{\mathbf{a,u},\theta,\mathbf{Q}}{\min} ~&~ \theta\\
		\text{ s.t } & \left\{\begin{array}{l}
			a_i\geq0,m-\mathbf{q}^T\mathbf{a}\geq 0, \mathbf{L(a)}\succeq 0,\\
			\mathbf{L(a)u=f},\\
			b_k\theta + \mathbf{g}_k^T\mathbf{u} = \tr\{\mathbf{A}_k^T\mathbf{Q}\}, \forall k\in\{1,\dots,m\},\\
            \mathbf{Q}\succeq 0.
		\end{array}\right.
	\end{split}
\end{equation}
\begin{example}[Compliance minimization]\label{example:compl_min_1}
	Compliance minimization is formulated as
	\begin{equation}\tag{$\mathcal{P}_{\text{compl}}$}\label{eqn:compl_minimization}
		\begin{split}
			\underset{\mathbf{a},\mathbf{u}}{\min} ~&~ \mathbf{f}^T\mathbf{u}\\
			\text{ s.t } & \left\{\begin{array}{l}
				a_i\geq 0,\mathbf{q}^T\mathbf{a}\leq m, \mathbf{K(a)}\succeq 0,\\
				\mathbf{K(a)u=f}.
			\end{array}\right.
		\end{split}
	\end{equation}
	The compliance $\mathbf{u}\mapsto \mathbf{f}^T\mathbf{u}$ is SDr, as it is linear in $u$. $\theta\geq \mathbf{f}^T\mathbf{u}$ is already an LMI, with a one-dimensional matrix. Thus \eqref{eqn:compl_minimization} is equivalent to
    \begin{equation}
		\begin{split}
			\underset{\mathbf{a,u}, \theta}{\min} ~&~ \theta\\
			\text{ s.t } & \left\{\begin{array}{l}
				a_i\geq 0,\mathbf{q}^T\mathbf{a}\leq m, \mathbf{K(a)}\succeq 0,\\
				\mathbf{K(a)u=f},\\
                \theta\geq \mathbf{f}^T \mathbf{u}.
			\end{array}\right.
		\end{split}
	\end{equation}
\end{example}
As we can see in the example \ref{example:compl_min_1}, the semidefinite representation of compliance is trivial because it is reduced to a single inequality which is in fact an LMI with $1\times 1$ matrix. However, we should note that this does not change the difficulty of the compliance minimization because the non-convex constraint of equilibrium equation persists in the formulation. The compliance minimization has a very particular structure that allows us to reformulate it equivalently as a convex problem. This is classical in the literature, see for e.g. \citep{BentalBendsoe1993}, \citep{Achtziger1992} and \citep[lecture 4]{Ben_Tal_2001}. To explain the motivation of the method proposed in the current study, we reproduce the same convex reformulation of compliance minimization in the example \ref{example:compl_min2}.

\begin{example}[Peak power minimization under single harmonic load \citep{heidari_optimization_2009}]
	Assume for now that the time-varying load has one frequency component $\mathbf{f}(t)=\cos(\omega t)\mathbf{f}_R+\sin(\omega t)\mathbf{f}_I$. The nodal velocities $\mathbf{v}(t)=\cos(\omega t)\mathbf{v}_R+\sin(\omega t)\mathbf{v}_I$ satisfy the ODE
	\begin{equation}
		\mathbf{M(a)}\ddot{\mathbf{v}}+\mathbf{K(a)v}=\dot{\mathbf{f}}
	\end{equation}
	and at the steady state $\mathbf{v}_R$ and $\mathbf{v}_I$ satisfy
	\begin{equation}
		\left\{\begin{array}{l}
			\mathbf{K}_\omega(\mathbf{a})\mathbf{v}_R=\omega \mathbf{f}_I,\\
			\mathbf{K}_\omega(\mathbf{a})\mathbf{v}_I=-\omega \mathbf{f}_R.\\
		\end{array}\right.
	\end{equation}
	It has been shown in \citep{heidari_optimization_2009} that
	\begin{equation}
        \begin{split}
            \mathbf{f}(t)^T\mathbf{v}(t)&=\frac{e^{i2\omega t}}{4}\left[(\mathbf{f}_R^T\mathbf{v}_R-\mathbf{f}_I^T\mathbf{v}_I)-i(\mathbf{f}_R^T\mathbf{v}_I+\mathbf{f}_I^T\mathbf{v}_R)\right] \\
                &+ \frac{e^{-i2\omega t}}{4}\left[(\mathbf{f}_R^T\mathbf{v}_R-\mathbf{f}_I^T\mathbf{v}_I)+i(\mathbf{f}_R^T\mathbf{v}_I+\mathbf{f}_I^T\mathbf{v}_R)\right].
        \end{split}
	\end{equation}    
    Consider $(\theta,\mathbf{v})$ in the epigraph of the peak power function $\theta\geq \underset{t}{\max}{|\mathbf{f}(t)^T\mathbf{v}(t)|}$. Then, letting $z=e^{i\omega t}$, we can write $\mathbf{f}^T\mathbf{v}$ as a trigonometric polynomial in $z$. The epigraph condition is equivalent to the nonnegativity of polynomials $\theta\pm \mathbf{f}^T\mathbf{v}$. By Theorem \ref{thrm:pos_sdp},  iff $\exists \mathbf{Q}^0\in\mathbb{S}^3_+, \mathbf{Q}^1\in\mathbb{S}^3_+$:
	\begin{equation}\label{eqn:single_harmonic_representation1}
		\begin{split}
		    \theta = \tr\{\mathbf{\Lambda}_0\mathbf{Q}^0\},\\
            0 = \tr\{\mathbf{\Lambda}_{-1}\mathbf{Q}^0\},\\
            \frac{1}{4}((\mathbf{f}_R^T\mathbf{v}_R-\mathbf{f}_I^T\mathbf{v}_I)-i(\mathbf{f}_R^T\mathbf{v}_I+\mathbf{f}_I^T\mathbf{v}_R)) = \tr\{\mathbf{\Lambda}_{-2}\mathbf{Q}^0\}
		\end{split}
	\end{equation}
 and
	\begin{equation}\label{eqn:single_harmonic_representation2}
		\begin{split}
		    -\theta = \tr\{\mathbf{\Lambda}_0\mathbf{Q}^1\},\\
            0 = \tr\{\mathbf{\Lambda}_{-1}\mathbf{Q}^1\},\\
            -\frac{1}{4}((\mathbf{f}_R^T\mathbf{v}_R-\mathbf{f}_I^T\mathbf{v}_I)+i(\mathbf{f}_R^T\mathbf{v}_I+\mathbf{f}_I^T\mathbf{v}_R)) = \tr\{\mathbf{\Lambda}_{-2}\mathbf{Q}^1\},
		\end{split}
	\end{equation}
 where $\mathbf{\Lambda}_0$ is the $3$ by $3$ identity matrix, $\mathbf{\Lambda}_{-1}=\begin{pmatrix}
     0&0&0\\
     1&0&0\\
     0&1&0
 \end{pmatrix}$ and $\mathbf{\Lambda}_{-2}=\begin{pmatrix}
     0&0&0\\
     0&0&0\\
     1&0&0
 \end{pmatrix}$. 
 
Now, we perform several equivalent transformations of the optimization problem \eqref{eqn:peak_power_min} when the periodic load has one Fourier component. First, we replace the objective function $p[\mathbf{v_R},\mathbf{v_I}]=\max_{t}|\mathbf{f}(t)^T\mathbf{v}(t)|$, which is the peak power by its epigraph representation. This is done by introducing a slack variable $\theta$. We obtain:
\begin{equation}
    \begin{split}
        \underset{\mathbf{a},\mathbf{v_R},\mathbf{v_I},\theta}{\min}&\theta\\
        \text{s.t. }&\begin{cases}
            a_i\geq 0,\mathbf{q}^T\mathbf{a}\leq m, \mathbf{K}_\omega(\mathbf{a})\succeq 0,\\
            \mathbf{K}_\omega(\mathbf{a})\mathbf{v}_R=\omega \mathbf{f}_I,\\
			\mathbf{K}_\omega(\mathbf{a})\mathbf{v}_I=-\omega \mathbf{f}_R,\\
            \theta\geq p[\mathbf{v_R},\mathbf{v_I}]
        \end{cases}
    \end{split}
\end{equation}
Further, we use the fact that the peak power is SDP representable. The condition $\theta\geq p[\mathbf{v_R},\mathbf{v_I}]$ is equivalent to the existence of $\mathbf{Q}^{0}\succeq 0,\mathbf{Q}^{1}\succeq 0$ so that \eqref{eqn:single_harmonic_representation1} and \eqref{eqn:single_harmonic_representation2} hold with constant matrices $\mathbf{\Lambda}_j$ for $j=0,-1,-2$. Let us also introduce $\mathbf{Q}^0$ and $\mathbf{Q}^1$ as slack variables and use \eqref{eqn:single_harmonic_representation1} and \eqref{eqn:single_harmonic_representation2} as linear constraints, the minimization of the peak power under a single harmonic load reads as
\begin{equation}
	\begin{split}
		\underset{\mathbf{a},\mathbf{v}_R,\mathbf{v}_I,\theta,\mathbf{Q}^0,\mathbf{Q}^1}{\min} & \theta\\
		\text{ s.t. } & \left\{\begin{array}{l}
			a_i\geq 0, \mathbf{q}^T\mathbf{a}\leq m,\\
            \mathbf{K}_\omega(\mathbf{a})\succeq 0,\\
			\mathbf{K}_\omega(\mathbf{a})\mathbf{v}_R=\omega \mathbf{f}_I,\\
			\mathbf{K}_\omega(\mathbf{a})\mathbf{v}_I=-\omega \mathbf{f}_R,\\
			\text{\eqref{eqn:single_harmonic_representation1} and \eqref{eqn:single_harmonic_representation2} as linear constraints,}\\
            \mathbf{Q}^0,\mathbf{Q}^1\succeq 0.
		\end{array}\right.
	\end{split}
\end{equation}
\end{example}

\begin{example}[Peak power minimization]
	The peak power minimization problem is formalized as
	\begin{equation}
		\begin{split}
			\underset{\mathbf{a},\mathbf{c(v)}}{\min} ~&~ p[\mathbf{c(v)}]\\
			\text{ s.t } & \left\{\begin{array}{l}
				a_i\geq0, \mathbf{a}^T\mathbf{q}\leq m\\
                \mathbf{K}_{N\omega}(\mathbf{a}),\\
				\mathbf{K}_{k\omega}(\mathbf{a})\mathbf{c}_k(\mathbf{v})=ik\omega \mathbf{c}_k(\mathbf{f}),\forall k.
			\end{array}\right.
		\end{split}
	\end{equation}
	The peak power $p[c(v)]=\underset{t\in[0,\frac{2\pi}{\omega }]}{\max}\{\left\lvert f(t)^Tv(t)\right\lvert\}$ is SDr thanks to the semidefinite certificate of nonnegativity of trigonometric polynomials, which follows from Theorem \ref{thrm:pos_sdp} in Appendix \ref{appendix:A}. The peak power minimization is, therefore, equivalent to
	\begin{equation}\tag{$\mathcal{P}_{\text{pp}}$}\label{eqn:peak_power_min_SDr}
		\begin{split}
			\underset{\mathbf{a},\mathbf{c(v)},\theta,\mathbf{Q}^i}{\min} ~&~ \theta\\
			\text{ s.t } & \left\{\begin{array}{l}
				a_i\geq0, \mathbf{a}^T\mathbf{q}\leq m\\
                \mathbf{K}_{N\omega}(\mathbf{a})\succeq 0,\\
				\mathbf{K}_{k\omega}(\mathbf{a})\mathbf{c}_k(\mathbf{v})=ik\omega \mathbf{c}_k(\mathbf{f})\forall k\\
				\text{ the equality constraints \eqref{eqn:sdr_peak_power}},\\
                \mathbf{Q}^0,\mathbf{Q}^1\succeq 0.
			\end{array}\right.
		\end{split}
	\end{equation}
\end{example}
The difficulty of the minimization problem of the form \eqref{eqn:general_minimization} is now concentrated in the equilibrium equation constraint. All other constraints can be treated efficiently using existing SDP solvers. 

For the next developments, we define a so-called ``physical'' feasible point of the equilibrium equation and focus on the first type of semidefinite representation, Definition \ref{def:sdprep_1}. For the semidefinite representation of the second type, Definition \ref{def:sdprep_2}, we can easily adapt the results that will be presented now, and we will treat them in the end of this subsection.

We start by noting that if a particular pair of variables $\mathbf{(a,u)}$ is feasible for the constraint $\mathbf{L(a)u=f}$, then $\mathbf{f}$ is in the range of $\mathbf{L(a)}$, and $\mathbf{u}$ must be of the form $\mathbf{u=L(a)}^\dagger \mathbf{f}+\mathbf{u}_0$, where $\mathbf{L(a)}^\dagger$ is the Moore-Penrose pseudoinverse of $\mathbf{L(a)}$ and $\mathbf{u}_0$ is in the null space of $\mathbf{L(a)}$. We call $\mathbf{L(a)}^\dagger \mathbf{f}$ (resp. $\mathbf{u}_0$) the physical (resp. nonphysical) part of $\mathbf{u}$. Note that since $\mathbf{f}$ is in the range of $\mathbf{L(a)}$, $(\mathbf{a}, \mathbf{L(a)}^\dagger \mathbf{f})$ is feasible for the equilibrium condition. Thus, we make the following assumption.
\begin{assumption}\label{assumption:independance1}
	Assume that for any feasible $\mathbf{(a,u)}$ of the constraint $\mathbf{L(a)u=f}$, $p[\mathbf{u}]$ is independent of the nonphysical part of $\mathbf{u}$, i.e., $p[\mathbf{u}]=p[\mathbf{L(a)}^\dagger \mathbf{f}]$.
\end{assumption}
\begin{example}[Compliance minimization, continued]\label{example:compl_min2}
    For compliance minimization, this assumption is satisfied by the symmetry of $\mathbf{K(a)}$. If $\mathbf{f}$ is in the range of $\mathbf{K(a)}$ then $\mathbf{f}$ is orthogonal to $\mathbf{u_0}$, and thus, $p[\mathbf{u}]=\mathbf{f}^T\mathbf{u}=\mathbf{f}^T\mathbf{K(a)}^\dagger \mathbf{f}$. We can reformulate the compliance minimization as a linear SDP problem
	\begin{equation}
		\begin{split}
			\underset{\mathbf{a},\theta}{\min} ~&~ \theta\\
			\text{ s.t } & \left\{\begin{array}{l}
				a_i\geq 0,\mathbf{q}^T\mathbf{a}\leq m,\\
				\begin{pmatrix}
					\theta & \mathbf{f}^T \\
					\mathbf{f} & \mathbf{K(a)}
				\end{pmatrix}\succeq 0
			\end{array}\right.
		\end{split}
	\end{equation}
	thanks to generalized Schur's complement Lemma,
	\begin{equation}
		\left\{\begin{array}{l}
			\mathbf{K(a)}\succeq 0,\\
			\mathbf{f}\in\range{\mathbf{K(a)}},\\
			\theta\geq \mathbf{f}^T\mathbf{K(a)}^\dagger \mathbf{f},
		\end{array}\right.\iff \begin{pmatrix}
			\theta & \mathbf{f}^T \\
			\mathbf{f} & \mathbf{K(a)}
		\end{pmatrix}\succeq 0.
	\end{equation}
\end{example}
Because $p$ is independent of the nonphysical solution, its epigraph is also independent of the nonphysical solution for any feasible $\mathbf{(a,u)}$. By the SDP representability of $p$, the LMI representation of the epigraph should also be independent of the nonphysical solution. We recall that from Definition \ref{def:sdprep_1} we have
\begin{equation}
	\theta\geq p[\mathbf{u}] \iff \exists \mathbf{w}, P_0(\theta)+P_1(\mathbf{u})+P_2(\mathbf{w})\succeq 0.
\end{equation} 
Then, we can express the linear matrix function $P_1(u)$ as
\begin{equation}
	P_1(u)=\sum_j (g_j^Tu) \tilde{P}_j
\end{equation}
with a family of vectors $\mathbf{g}_j$ and matrices $\tilde{\mathbf{P}}_j$ of an appropriate size. We assume that the following condition holds:

\begin{assumption}\label{assumption:independance2}
	Assume that $\mathbf{g}_j$ is in the range of $\mathbf{L(a)}$ whenever $\mathbf{f}$ is in the range of $\mathbf{L(a)}$.
\end{assumption}
\begin{lemma}
    Assumption \ref{assumption:independance2} is sufficient for Assumption~\ref{assumption:independance1}.
\end{lemma}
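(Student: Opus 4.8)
The plan is to show that Assumption~\ref{assumption:independance2} forces the linear term $P_1(\mathbf{u})$ — and therefore the entire LMI that characterizes the epigraph of $p$ — to depend on $\mathbf{u}$ only through its physical part $\mathbf{L(a)}^\dagger\mathbf{f}$. Once $P_1(\mathbf{u})=P_1(\mathbf{L(a)}^\dagger\mathbf{f})$, the feasibility sets defining $\theta\geq p[\mathbf{u}]$ and $\theta\geq p[\mathbf{L(a)}^\dagger\mathbf{f}]$ coincide, which is exactly the content of Assumption~\ref{assumption:independance1}.

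First I would fix a feasible pair $(\mathbf{a},\mathbf{u})$ of $\mathbf{L(a)u=f}$ and use the decomposition $\mathbf{u}=\mathbf{L(a)}^\dagger\mathbf{f}+\mathbf{u}_0$ with $\mathbf{u}_0\in\kernel\mathbf{L(a)}$, as introduced in the paragraph preceding Assumption~\ref{assumption:independance1}. Feasibility gives $\mathbf{f}\in\range\mathbf{L(a)}$, so Assumption~\ref{assumption:independance2} yields $\mathbf{g}_j\in\range\mathbf{L(a)}$ for every $j$. The key step is then the orthogonality $\mathbf{g}_j^T\mathbf{u}_0=0$: because $\mathbf{L(a)}=-(k\omega)^2\mathbf{M(a)}+\mathbf{K(a)}$ is symmetric (a combination of the symmetric matrices $\mathbf{M(a)}$ and $\mathbf{K(a)}$), its range is the orthogonal complement of its kernel, $\range\mathbf{L(a)}=(\kernel\mathbf{L(a)})^\perp$. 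Hence $\mathbf{g}_j\perp\mathbf{u}_0$, and
\begin{equation}
    \mathbf{g}_j^T\mathbf{u}=\mathbf{g}_j^T\mathbf{L(a)}^\dagger\mathbf{f}+\mathbf{g}_j^T\mathbf{u}_0=\mathbf{g}_j^T\mathbf{L(a)}^\dagger\mathbf{f},\qquad\forall j.
\end{equation}
Substituting this into $P_1(\mathbf{u})=\sum_j(\mathbf{g}_j^T\mathbf{u})\tilde{\mathbf{P}}_j$ gives $P_1(\mathbf{u})=P_1(\mathbf{L(a)}^\dagger\mathbf{f})$.

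Finally I would feed this back into Definition~\ref{def:sdprep_1}. Since $P_0(\theta)+P_1(\mathbf{u})+P_2(\mathbf{w})$ and $P_0(\theta)+P_1(\mathbf{L(a)}^\dagger\mathbf{f})+P_2(\mathbf{w})$ are literally the same matrix pencil in $(\theta,\mathbf{w})$, the half-lines $\{\theta:\theta\geq p[\mathbf{u}]\}$ and $\{\theta:\theta\geq p[\mathbf{L(a)}^\dagger\mathbf{f}]\}$ coincide. Taking the infimum of each set then yields $p[\mathbf{u}]=p[\mathbf{L(a)}^\dagger\mathbf{f}]$, establishing Assumption~\ref{assumption:independance1}.

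I expect the only genuine subtlety to be the identity $\range\mathbf{L(a)}=(\kernel\mathbf{L(a)})^\perp$, which is precisely where symmetry of $\mathbf{L(a)}$ is used; this mirrors the role played by the symmetry of $\mathbf{K(a)}$ in Example~\ref{example:compl_min2}. If $\mathbf{L(a)}$ were not symmetric the orthogonality $\mathbf{g}_j^T\mathbf{u}_0=0$ could fail and $p$ could acquire a genuine dependence on the nonphysical part $\mathbf{u}_0$. In the complex Fourier setting one argues identically, replacing transposes by conjugate transposes and invoking self-adjointness of $\mathbf{L(a)}=\mathbf{K}_{k\omega}(\mathbf{a})$ to obtain $\mathbf{g}_j^*\mathbf{u}_0=0$.
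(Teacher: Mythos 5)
Your proof is correct and takes essentially the same route as the paper's own: the same decomposition $\mathbf{u}=\mathbf{L(a)}^\dagger\mathbf{f}+\mathbf{u}_0$, the same key orthogonality $\mathbf{g}_j^T\mathbf{u}_0=0$ obtained from the symmetry of $\mathbf{L(a)}$ (i.e., $\range \mathbf{L(a)}=(\kernel \mathbf{L(a)})^\perp$) together with Assumption~\ref{assumption:independance2}, and the same conclusion that $\mathbf{P}_1(\mathbf{u})=\mathbf{P}_1(\mathbf{L(a)}^\dagger\mathbf{f})$, so the LMI representation and hence $p$ itself are independent of the nonphysical part. Your final step of identifying the two half-lines $\{\theta:\theta\geq p[\mathbf{u}]\}$ and $\{\theta:\theta\geq p[\mathbf{L(a)}^\dagger\mathbf{f}]\}$ and taking infima merely makes explicit what the paper leaves implicit.
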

\begin{proof}
    Let $p$ be an SDr function such that Assumption \ref{assumption:independance2} holds. Then for any $\mathbf{(a,u)}$ feasible for $\mathbf{L(a)u=f}$, by the symmetry of $\mathbf{L(a)}$, if $\mathbf{g}_j$ is in the range of $\mathbf{L(a)}$ then $\mathbf{u}_0$ is orthogonal to $\mathbf{g}_j$ and thus
    \begin{equation}
    	\mathbf{P}_1(\mathbf{u})=\sum_j (\mathbf{g}_j^T\mathbf{L(a)}^\dagger \mathbf{f}) \tilde{\mathbf{P}}_j.
    \end{equation}
    Thus, the LMI representation of $p$ is independent of the non-physical solution, which implies that $p$ is independent of the nonphysical solution.
\end{proof}

Returning to the peak power minimization problem, by a careful study of the generalized eigenvalue problem of free vibrations \citep{achtziger_structural_2008},  we can show that the peak power function is indeed independent of the nonphysical part of $(\mathbf{a},\mathbf{c(v)})$ whenever it is feasible and whenever the highest driving frequency of the load is below the smallest non-singular free-vibration eigenfrequency of the structure, i.e., $-N^2\omega^2\mathbf{M(a)}+\mathbf{K(a)}\succeq 0$. For the proof, we refer the reader to Lemma \ref{lemma:same_range} of Appendix \ref{appendix:B}.

Assumptions \ref{assumption:independance1} and \ref{assumption:independance2} are not restrictive, since it would be questionable to optimize an objective function that depends on the nonphysical part of the solution of $\mathbf{L(a)u=f}$. In the context of topology optimization of discrete structures, the non-physical part of $\mathbf{L(a)u=f}$ would consist of the displacements or the velocities of nodes without any attached member.

To check Assumption \ref{assumption:independance2} for objective function with the second SDP representation, we recall that if the objective function $p[\mathbf{u}]$ has the second type SDP representation then there exists $b_k\in\mathbb{R}$, $\mathbf{g}_k$ and $\mathbf{A}_k$ of fixed dimension for $k=1,\dots,m$ such that $\theta\geq p[\mathbf{u}]\iff b_k\theta + \mathbf{g}_k^T\mathbf{u} = \tr\{\mathbf{A}_k^TQ\}$ where $\mathbf{Q}\succeq 0$. $p$ satisfies the assumption \ref{assumption:independance2} iff the $\mathbf{g}_k$'s of the SDP representation are in the range of $\mathbf{L(a)}$ whenever $\mathbf{f}$ is in the range of $\mathbf{L(a)}$.

\subsection{Convex relaxation}\label{sec:conv_relax}
Let us now consider a SDr function and a minimization problem of the form \eqref{eqn:general_minimization}. Furthermore, let Assumptions \ref{assumption:independance1} and \ref{assumption:independance2} hold. In what follows, we propose a convex relaxation of \eqref{eqn:general_minimization}. For brevity, we only present the strategy for the first-type representation, Definition \ref{def:sdprep_1}. Adaptation of the results of this subsection to Definition \ref{def:sdprep_2} is straightforward.

First, by the semidefinite representability of the objective function, we introduce slack variables $\theta$ and $w$ and rewrite \eqref{eqn:general_minimization} as
\begin{equation}
	\begin{split}
		\underset{\mathbf{a,u},\theta,\mathbf{w}}{\min} ~&~ \theta\\
		\text{ s.t } & \left\{\begin{array}{l}
			a_i\geq0,m-\mathbf{q}^T\mathbf{a}\geq 0, \mathbf{L(a)}\succeq 0,\\
			\mathbf{L(a)u=f},\\
			\mathbf{P}_0(\theta)+\mathbf{P}_1(\mathbf{u})+\mathbf{P}_2(\mathbf{w})\succeq 0.\\
		\end{array}\right.
	\end{split}
\end{equation}
Due to Assumption \ref{assumption:independance1}, we can remove $\mathbf{u}$ from the set of optimization variables, since only the physical part of the equation $\mathbf{L(a)u=f}$ matters. However, we need to keep in mind that $\mathbf{f}$ must be in the range of $\mathbf{L(a)}$, 
\begin{equation}
	\begin{split}
		\underset{\mathbf{a},\theta,\mathbf{w}}{\min} ~&~ \theta\\
		\text{ s.t } & \left\{\begin{array}{l}
			a_i\geq0,m-\mathbf{q}^T\mathbf{a}\geq 0, \mathbf{L(a)}\succeq 0,\\
			\mathbf{f}\in\range{\mathbf{L(a)}},\\
			\mathbf{P}_0(\theta)+\mathbf{P}_1(\mathbf{L(a)}^\dagger \mathbf{f})+\mathbf{P}_2(\mathbf{w})\succeq 0.\\
		\end{array}\right.
	\end{split}
\end{equation}
After exploiting Assumption \ref{assumption:independance2}, we observe that the linear matrix function $\mathbf{P}_2$ evaluated at $\mathbf{L(a)}^\dagger \mathbf{f}$ is
\begin{equation}
	\mathbf{P}_1(\mathbf{L(a)}^\dagger \mathbf{f}) = \sum_{j}(\mathbf{g}_j^T\mathbf{L(a)}^\dagger \mathbf{f}) \tilde{\mathbf{P}}_j.
\end{equation} 
Furthermore, we consider a matrix $\mathbf{F} = (\mathbf{f}~\mathbf{g}_j)$ whose column vectors are $\mathbf{f}$ and $\mathbf{g}_j$ for all (finitely many) $j$. Then, there exist constant matrices $\mathbf{C}_j$ such that $\mathbf{g}_j^T\mathbf{L(a)}^\dagger \mathbf{f} = \tr\{\mathbf{C}_j^T(\mathbf{F}^T\mathbf{L(a)}^\dagger \mathbf{F})\}$. To derive $\mathbf{C}_j$, we note that
\begin{equation}
    \begin{split}
        \mathbf{g}_j^T\mathbf{L(a)}^\dagger \mathbf{f} &= (\mathbf{Fe}_{j+1})^T\mathbf{L(a)}^\dagger (\mathbf{Fe}_0) \\
        &= \tr\left\{\mathbf{e}_0^T\mathbf{e}_{j+1} \mathbf{F}^T\mathbf{L(a)}^\dagger \mathbf{F}\right\}
    \end{split}
\end{equation}
with $\mathbf{e}_i$ denoting vectors of the canonical basis. Consequently, we can symmetrize the last expression and obtain $\mathbf{C}_j$. After introducing a variable $\mathbf{X}$ and a constraint $\mathbf{X}=\mathbf{F}^T\mathbf{L(a)}^\dagger \mathbf{F}$, we obtain the optimization problem
\begin{equation}
	\begin{split}
		\underset{\mathbf{a},\theta,\mathbf{w},\mathbf{X}}{\min} ~&~ \theta\\
		\text{ s.t } & \left\{\begin{array}{l}
			a_i\geq0,m-\mathbf{q}^T\mathbf{a}\geq 0, \mathbf{L(a)}\succeq 0,\\
			\mathbf{f}\in\range{\mathbf{L(a)}},\\
			\mathbf{X}=\mathbf{F}^T\mathbf{L(a)}^\dagger \mathbf{F},\\
			\mathbf{P}_0(\theta)+\sum_j\tr\{\mathbf{C}_j^T\mathbf{X}\} \tilde{\mathbf{P}}_j+\mathbf{P}_3(\mathbf{w})\succeq 0.\\
		\end{array}\right.
	\end{split}
\end{equation}
The range constraint can now be replaced because $\mathbf{X}=\mathbf{F}^T\mathbf{L(a)}^\dagger \mathbf{F}$ is equivalent to
\begin{equation}
   \mathbf{X}=\mathbf{F}^T\mathbf{L(a)}^\dagger \mathbf{F}\iff\left\{\begin{array}{l}
         \mathbf{X}\preceq \mathbf{F}^T\mathbf{L(a)}^\dagger \mathbf{F}\\
         \mathbf{X}\succeq \mathbf{F}^T\mathbf{L(a)}^\dagger \mathbf{F}
    \end{array}\right.,
\end{equation}
by splitting matrix equality into two semidefinite inequalities. Combined with Assumption \ref{assumption:independance2} and thanks to Schur's complement Lemma, we have the equivalent formulation
\begin{equation}
	\begin{split}
		\underset{\mathbf{a},\theta,\mathbf{w},\mathbf{X}}{\min} ~&~ \theta\\
		\text{ s.t } & \left\{\begin{array}{l}
			a_i\geq0,m-\mathbf{q}^T\mathbf{a}\geq 0,\\
			\begin{pmatrix}
			    \mathbf{X} & \mathbf{F}^T\\
                \mathbf{F} & \mathbf{L(a)}
			\end{pmatrix}\succeq 0,\\
			\mathbf{X}\preceq \mathbf{F}^T\mathbf{L(a)}^\dagger \mathbf{F},\\
			\mathbf{P}_0(\theta)+\sum_j\tr\{\mathbf{C}_j^T \mathbf{X}\} \tilde{\mathbf{P}}_j+\mathbf{P}_3(\mathbf{w})\succeq 0.\\
		\end{array}\right.
	\end{split}
\end{equation}
For any $\mathbf{X}$ and $\mathbf{a}$ such that $\begin{pmatrix}\mathbf{X} & \mathbf{F}^T\\\mathbf{F} & \mathbf{L(a)}\end{pmatrix}\succeq 0$, there is the trace inequality $\tr\{\mathbf{X}-\mathbf{F}^T\mathbf{L(a)}^\dagger \mathbf{F}\}\geq 0$. The trace equality $\tr\{\mathbf{X}-\mathbf{F}^T\mathbf{L(a)}^\dagger \mathbf{F}\}= 0$ is thus sufficient and necessary for the satisfaction of $\mathbf{X}\preceq \mathbf{F}^T\mathbf{L(a)}^\dagger \mathbf{F}$. Consequently, we can replace $\mathbf{X}\preceq \mathbf{F}^T\mathbf{L(a)}^\dagger \mathbf{F}$ by $\tr\{\mathbf{X}-\mathbf{F}^T\mathbf{L(a)}^\dagger \mathbf{F}\}= 0$.

The only non-convex and non-linear constraint is now $\tr\{\mathbf{X}-\mathbf{F}^T\mathbf{L(a)}^\dagger \mathbf{F}\}= 0$.  A convex relaxation consists of removing the trace equality constraint,
\begin{equation}\label{eqn:convex_relaxation}
	\begin{split}
		\underset{\mathbf{a},\theta,\mathbf{w},\mathbf{X}}{\min} ~&~ \theta\\
		\text{ s.t } & \left\{\begin{array}{l}
			a_i\geq0,m-\mathbf{q}^T\mathbf{a}\geq 0,\\
			\begin{pmatrix}
			    \mathbf{X} & \mathbf{F}^T\\
                \mathbf{F} & \mathbf{L(a)}
			\end{pmatrix}\succeq 0,\\
			\mathbf{P}_0(\theta)+\sum_j\tr\{\mathbf{C}_j^T \mathbf{X}\} \tilde{\mathbf{P}}_j+\mathbf{P}_3(\mathbf{w})\succeq 0.\\
		\end{array}\right.
	\end{split}
\end{equation}

\begin{remark}
	We observe that:
	\begin{itemize}
		\item The relaxation does not require linearity of $\mathbf{L(a)}$, i.e., it applies to both truss and frame settings. The relaxation requires only that $\mathbf{L(a)} \succeq 0$. Under the assumption of linearity of $\mathbf{L(a)}$, the relaxation is indeed a convex SDP problem, and thus can be solved efficiently by existing software.
		\item The feasible points of the relaxation are statically admissible designs, i.e., cross-section areas that carry the structural loads, which holds because the second LMI constraint secures that $\mathbf{f}$ is in the range of $\mathbf{L(a)}$.
		\item If $\mathbf{X}=\mathbf{F}^T\mathbf{L(a)}^\dagger \mathbf{F}$ holds at the optimal solution of the convex relaxation, then we can extract the optimal solution of \eqref{eqn:general_minimization} from the convex relaxation.

	\end{itemize}
\end{remark}
\subsection{Link between the convex relaxation and Lagrange relaxation}\label{sec:lag_relax}
Let us now take another look at the equivalent formulation with trace equality:
\begin{equation}
	\begin{split}
		\underset{\mathbf{a},\theta,\mathbf{w},\mathbf{X}}{\min} ~&~ \theta\\
		\text{ s.t } & \left\{\begin{array}{l}
			a_i\geq0,m-\mathbf{q}^T\mathbf{a}\geq 0,\\
			\begin{pmatrix}
			    \mathbf{X} & \mathbf{F}^T\\
                \mathbf{F} & \mathbf{L(a)}
			\end{pmatrix}\succeq 0,\\
            \tr\{\mathbf{X}-\mathbf{F}^T\mathbf{L(a)}^\dagger \mathbf{F}\}= 0\\
			\mathbf{P}_0(\theta)+\sum_j\tr\{\mathbf{C}_j^T \mathbf{X}\} \tilde{\mathbf{P}}_j+\mathbf{P}_3(\mathbf{w})\succeq 0.\\
		\end{array}\right.
	\end{split}
\end{equation}
Considering the second LMI, if $\mathbf{a}$ and $\mathbf{X}$ are feasible, it must hold that
\begin{equation}
    \tr\{\mathbf{X}-\mathbf{F}^T\mathbf{L(a)}^\dagger \mathbf{F}\} \geq 0.
\end{equation}
Hence, we can replace the equality by $\tr\{\mathbf{X}-\mathbf{F}^T\mathbf{L(a)}^\dagger \mathbf{F}\} \leq 0$ and the following optimization stays equivalent to \eqref{eqn:general_minimization}:
\begin{equation}\label{eqn:before_lag_trace_ineq}
		\begin{split}
		\underset{\mathbf{a},\theta,\mathbf{w},\mathbf{X}}{\min} ~&~ \theta\\
		\text{ s.t } & \left\{\begin{array}{l}
			a_i\geq0,m-\mathbf{q}^T\mathbf{a}\geq 0,\\
			\begin{pmatrix}
			    \mathbf{X} & \mathbf{F}^T\\
                \mathbf{F} & \mathbf{L(a)}
			\end{pmatrix}\succeq 0,\\
            \tr\{\mathbf{X}-\mathbf{F}^T\mathbf{L(a)}^\dagger \mathbf{F}\} \leq 0\\
			\mathbf{P}_0(\theta)+\sum_j\tr\{\mathbf{C}_j^T \mathbf{X}\} \tilde{\mathbf{P}}_j+\mathbf{P}_3(\mathbf{w})\succeq 0.\\
		\end{array}\right.
	\end{split}
\end{equation}
The Lagrange relaxation consists of moving the inequality constraint into the objective and penalizing it with a positive weight $\eta$, leading to
\begin{equation}\label{eqn:lag_relax}
		\begin{split}
		\underset{\mathbf{a},\theta,\mathbf{w},\mathbf{X}}{\min} ~&~ \theta+\eta\tr\{\mathbf{X}-\mathbf{F}^T\mathbf{L(a)}^\dagger \mathbf{F}\}\\
		\text{ s.t } & \left\{\begin{array}{l}
			a_i\geq0,m-\mathbf{q}^T\mathbf{a}\geq 0,\\
			\begin{pmatrix}
			    \mathbf{X} & \mathbf{F}^T\\
                \mathbf{F} & \mathbf{L(a)}
			\end{pmatrix}\succeq 0,\\
			\mathbf{P}_0(\theta)+\sum_j\tr\{\mathbf{C}_j^T \mathbf{X}\} \tilde{\mathbf{P}}_j+\mathbf{P}_3(\mathbf{w})\succeq 0.\\
		\end{array}\right.
	\end{split}
\end{equation}
where the penalty factor $\eta$ is increased until the original constraint $\tr\{\mathbf{X}-\mathbf{F}^T\mathbf{L(a)}^\dagger \mathbf{F}\}\leq 0$ becomes feasible. Let us denote by $L(\eta)$ the optimal value depending on $\eta$. For one $\eta^*$, suppose that $(\mathbf{a}^*,\theta^*,\mathbf{w}^*,\mathbf{X}^*)$ is the corresponding solution, then for any $(\mathbf{a},\theta,\mathbf{w},\mathbf{X})$ satisfying the constraint of \eqref{eqn:before_lag_trace_ineq}, we have
\begin{equation}
    \begin{split}
        \theta&\geq\theta+\underbrace{\eta^*\tr\{\mathbf{X}-\mathbf{F}^T\mathbf{L(a)}^\dagger \mathbf{F}\}}_{\leq 0}\\
        &\geq \theta^*+\eta^*\tr\{\mathbf{X}^*-\mathbf{F}^T\mathbf{L}(\mathbf{a}^*)^\dagger \mathbf{F}\} = L(\eta^*).
    \end{split}
\end{equation}
Therefore, $\underset{\eta\geq 0}{\max}~L(\eta)$ provides a lower bound of the problem \eqref{eqn:before_lag_trace_ineq}.

We note that the convex relaxation \eqref{eqn:convex_relaxation} corresponds to the Lagrange relaxation with weight $\eta=0$. However, the Lagrange relaxation remains a difficult problem due to the presence of $\tr\{\mathbf{F}^T\mathbf{L(a)}^\dagger \mathbf{F}\}$. We propose to solve the following relaxation, by removing $\tr\{\mathbf{F}^T\mathbf{L(a)}^\dagger \mathbf{F}\}$ from the objective function:
\begin{equation}\label{eqn:lag_relax_drop}
	\begin{split}
		\underset{\mathbf{a},\theta,\mathbf{w},\mathbf{X}}{\min} ~&~ \theta+\eta\tr\{\mathbf{X}\}\\
		\text{ s.t } & \left\{\begin{array}{l}
			a_i\geq0,m-\mathbf{q}^T\mathbf{a}\geq 0,\\
			\begin{pmatrix}
			    \mathbf{X} & \mathbf{F}^T\\
                \mathbf{F} & \mathbf{L(a)}
			\end{pmatrix}\succeq 0,\\
			\mathbf{P}_0(\theta)+\sum_j\tr\{\mathbf{C}_j^T \mathbf{X}\} \tilde{\mathbf{P}}_j+\mathbf{P}_3(\mathbf{w})\succeq 0.\\
		\end{array}\right.
	\end{split}
\end{equation}
which is a linear SDP problem. 

\MTc{Returning to the comment by Didier on Thursday, $\eta\tr\{X-F^TL(a)^\dagger F\}$ is equivalent to $\eta\tr\{X\}$ - $\eta\tr\{F^TL(a)^\dagger F\}$. For the first term, we want as low a trace of $X$ as possible. For the second term and due to the negative sign, we would like to have the maximum trace of the pseudo-inverse (weighted by $F$), which can be roughly approximated by minimizing the trace of $L(a)$. Consequently, we may have $\eta\tr\{X\} + \eta\tr\{L(a)\}$ in the objective. I don't know whether there is any good in that, however, because the term $\tr\{L(a)\}$ has a similar effect as the volume/weight function $q^Ta$. Because of the penalty, the relaxed solution may then tend to have a very low weight (not desired; we expect the opposite for the optimum points). To conclude this reasoning, I think you can also justify the only present $X$ by this.}

\section{Numerical examples}\label{sec:examples}

This section illustrates the theoretical developments of Section \ref{sec:minSDR} by means of numerical experiments. The construction of the SDP problems is detailed in Appendix \ref{appendix:peak_power_minimization_multi}. Specifically, we solve a problem introduced in \citep{heidari_optimization_2009}, showing that our approach not only reaches the optimal solution for the in-phase setting, but also provides a better design for the out-of-phase scenario. Our second example \ref{sec:cantilever} illustrates the applicability of the method to larger problems. The third \ref{sec:two_rotation} and fourth \ref{sec:multi_freq} examples illustrate the influence of multiple harmonics on design. 

All numerical examples presented in this section were implemented in a Python code available at \url{https://gitlab.com/ma.shenyuan/truss_exp}, with the optimization programs modeled in \textbf{CVXPY} \citep{diamond2016cvxpy}, a Python interface for modeling optimization problems, and providing an interface to the optimizer \textbf{MOSEK} \citep{mosek} that we used to solve the resulting SDP problems. The optimization problems were solved on a standard laptop, equipped with the 6 AMD Ryzen 5 4500U processors and $16$ GB of RAM.

\subsection{$21$-element problem by Heidari~\emph{et al.}}\label{sec:examples_heidari}
As the first illustration, we consider the problem introduced in \citep{heidari_optimization_2009}: a ground structure containing $21$ finite elements and $12$ nodes, see Fig.~\ref{fig:heidari_bc}. In this problem, all horizontal and vertical bars share a length of $1$, whereas the diagonal ones are $\sqrt{2}$ long. All elements are made of the same material, with the Young modulus $E=25,000$ and density $\rho=1.0$. Furthermore, we bound the total structural weight from above by $m=1$.

\begin{figure}[!htbp]
\centering
\includegraphics{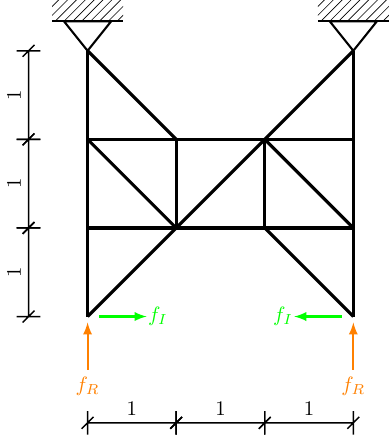}
\caption{Boundary conditions and ground structure of the $21$-element truss structure introduced in \citep{heidari_optimization_2009}.}\label{fig:heidari_bc}
\end{figure}

\begin{figure*}[!h]
\centering
\includegraphics[width=\linewidth]{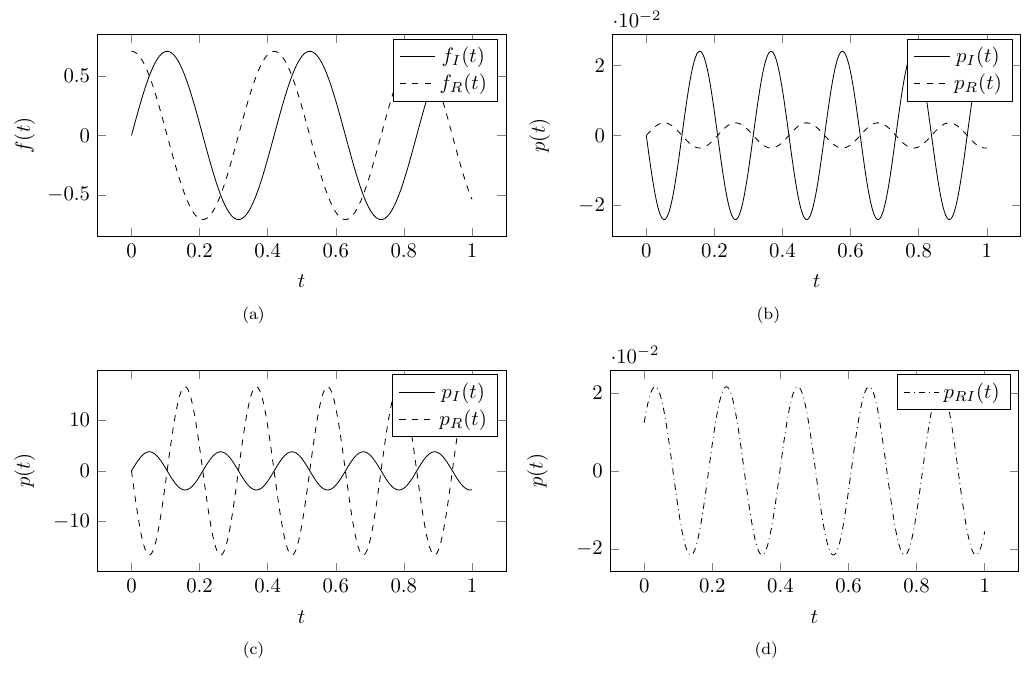}
\caption{(a) Time-varying loads $f_R(t)$ and $f_I(t)$ and the powers associated with the (b) optimal designs under in-phase loads, and (c) the same designs under out-of-phase loads. Figure (d) shows the power of a design optimized for out-of-phase loads.}
\label{fig:powers}
\end{figure*}

Kinematic boundary conditions consist of fixed supports at the top nodes, preventing their horizontal and vertical movements. For dynamic loads, we consider them acting at the bottom nodes, with the horizontal forces (denoted in green color in Fig.~\ref{fig:heidari_bc}) $f_I(t) = \frac{1}{2} \sin(\omega t)$ and the vertical forces (drawn in orange in Fig.~\ref{fig:heidari_bc}) being $f_R(t)=\frac{1}{2} \cos(\omega t)$. For both these loading functions, we assume the same angular frequency $\omega=15$. The time-dependent force magnitudes appear visualized in Fig.~\ref{fig:powers}a. Notice that the resultant of the components $f_R$ and $f_I$ is always a force of magnitude $1/2$, thus modeling an unbalanced rotating load with the period of $T=\frac{2\pi}{15}\approx0.42$. Under this setting, the time varying load has one Fourier component $\mathbf{c}_1(\mathbf{f})$ such that the last four values are
\begin{equation}
    \mathbf{c}_1(\mathbf{f})=\begin{pmatrix}
        \vdots\\
        i/4\\
        1/4\\
        -i/4\\
        1/4
    \end{pmatrix}.
\end{equation}

\subsubsection{In-phase loads}
First, we optimize the peak power of the loads $f_R(t)$ and $f_I(t)$ independently. For this setting, the relaxation with penalty \eqref{eqn:lag_relax_drop} is exact, as was shown in \citep{heidari_optimization_2009}. Thus, globally optimal solutions can be found by solving a single SDP problem; see Figs.~\ref{fig:in_phase} and \ref{fig:in_phase2} for their topology and Fig.~\ref{fig:powers}b for the optimal time-varying powers $p_R(t)$ and $p_I(t)$. The corresponding optimal peak powers are $0.0036$ and $0.0241$, respectively.

\begin{figure*}[!h]
\centering
\subfigure[]{
         \includegraphics[height=4cm]{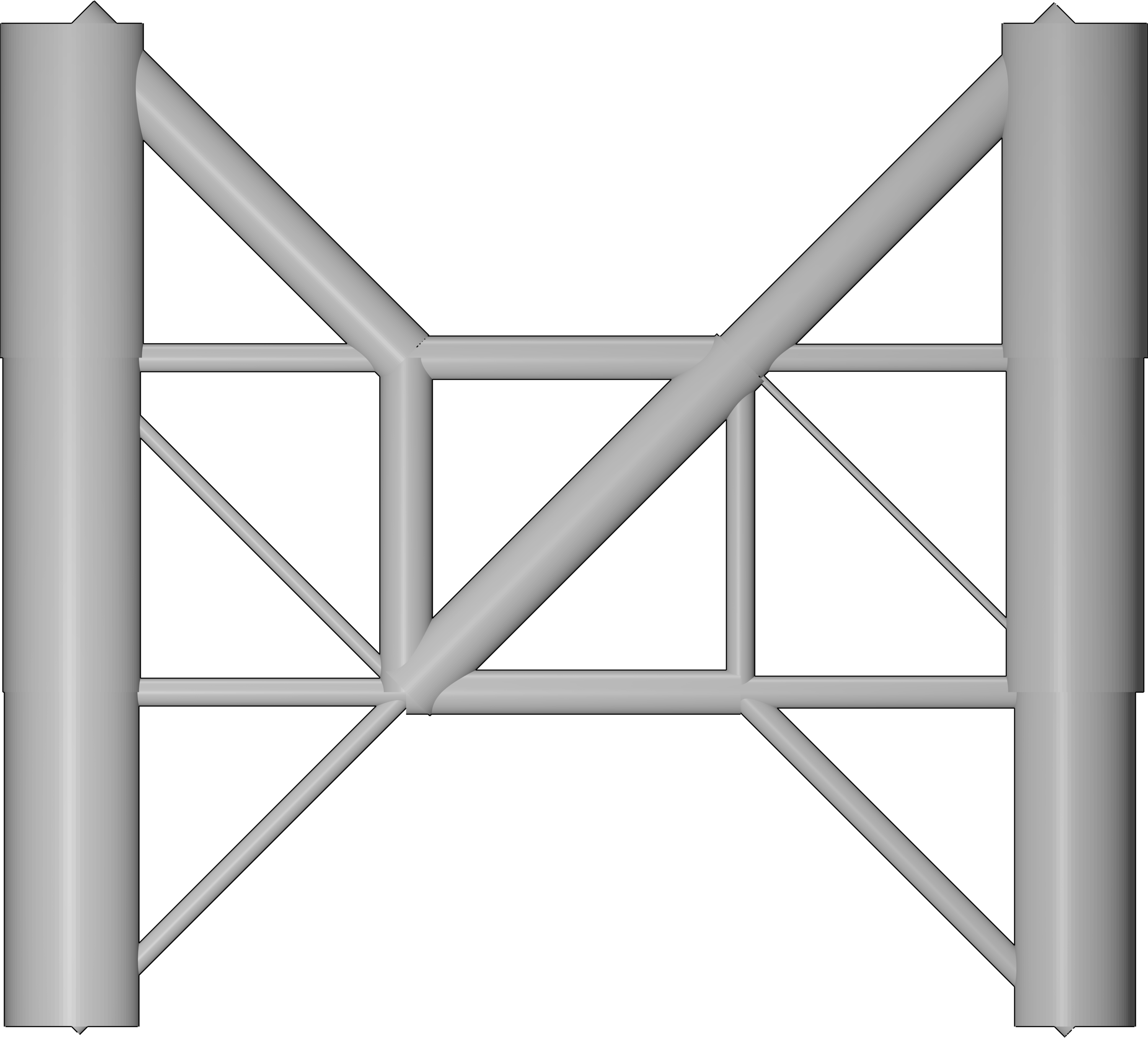}
        \label{fig:in_phase}
    }%
\hfill\subfigure[]{
         \includegraphics[height=4cm]{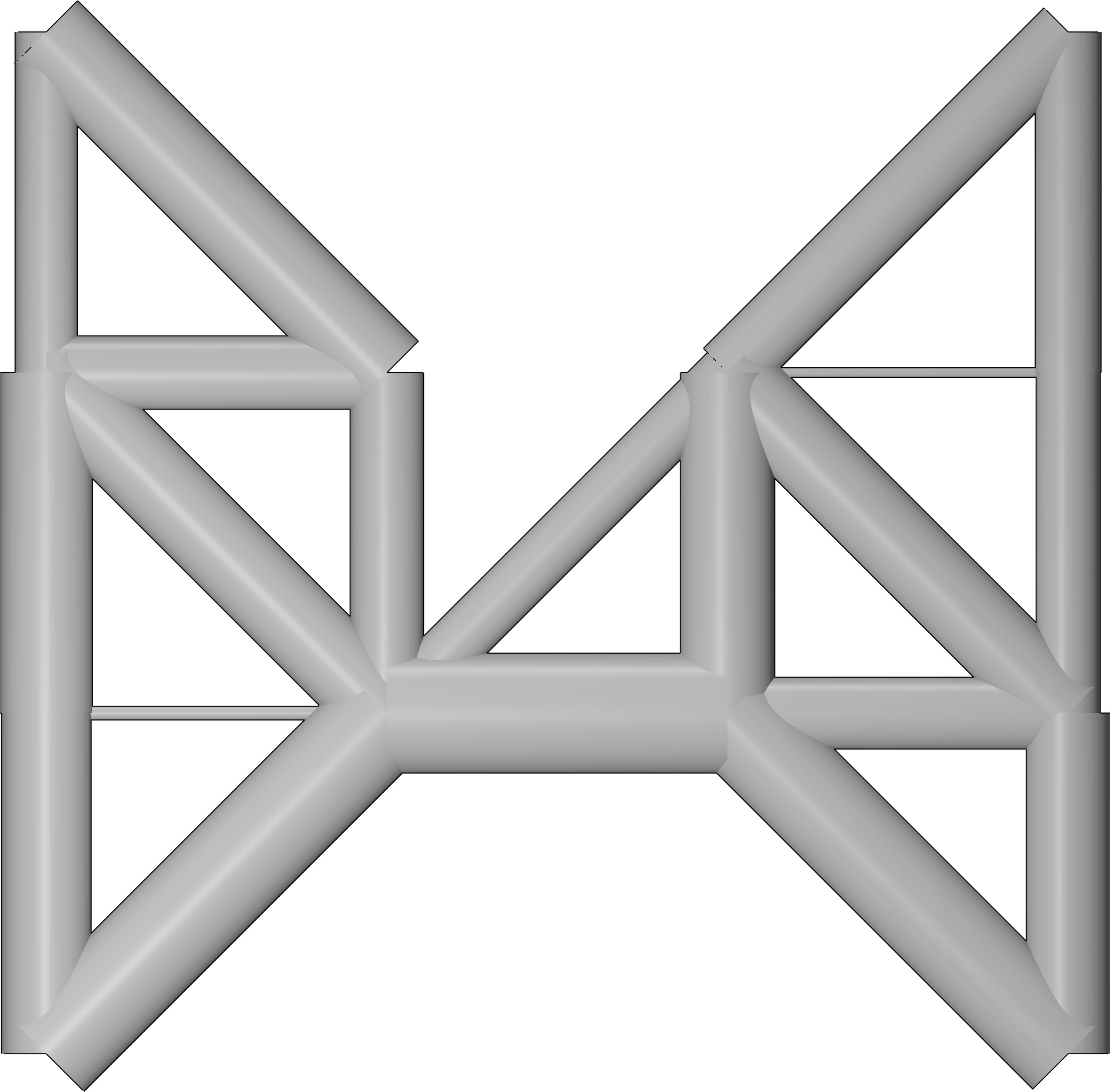}
        \label{fig:in_phase2}
    }%
\hfill\subfigure[]{
         \includegraphics[height=4cm]{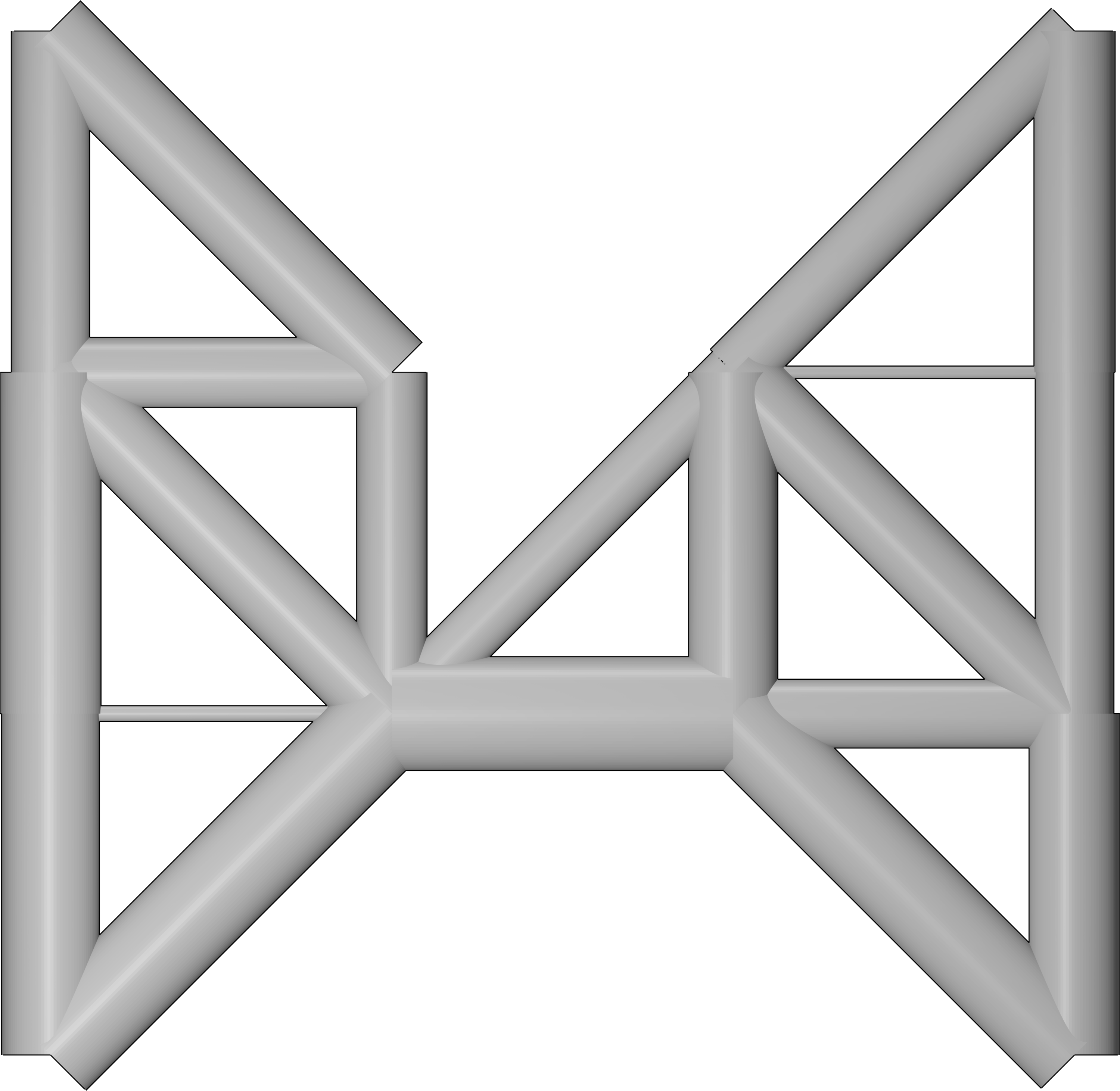}
        \label{fig:out_phase2}
    }
\caption{Optimized topology for the (a) in-phase loads $f_R$, (b) in-phase loads $f_I$, and (c) out-of-phase loads $f_R$ and $f_I$ acting according to Fig.~\ref{fig:heidari_bc}.}
\end{figure*}

\subsubsection{Out-of-phase loads}
Next, we consider the setting of the loads $f_I(t)$ and $f_R(t)$ that act simultaneously. The first, naive option, would be to investigate the performance of structures designed for in-phase loads $f_R(t)$ or $f_I(t)$ only. Not surprisingly, the performance of these designs is far from optimal because they do not suppress power by exploiting the interaction between the loads; see Fig.~\ref{fig:powers}c. A similar situation also appears for the worst-case peak power minimization by Heidari \textit{et al.} \citep{heidari_optimization_2009}, in which a solution is searched for the worst-case of all unit forces, i.e., the loads acting in phase. As shown in Fig.~\ref{fig:powers}d, the peak power for the out-of-phase configuration can be considerably reduced by enabling this interaction.

To achieve this, we apply the convex relaxation introduced in Section \ref{sec:minSDR}. The optimal value of the objective function is equal to $1.203\times 10^{-11}$, which is a lower bound for the minimal peak power. However, the actual peak power of the design given by the convex relaxation is equal to $0.0831$. For the relaxed solution, we also observe that the total mass constraint is not active: The total mass for the relaxation solution is only $0.40$, which means that we have used $40\%$ of the available mass.

Also, the optimal $\mathbf{X}$ is different from $\mathbf{F}^*\mathbf{L}_{1,\omega}(\mathbf{a})^\dagger \mathbf{F}$ at the optimal solution. The trace difference is evaluated as $\tr\{\mathbf{X}-\mathbf{F}^*\mathbf{L}_{1,\omega}(\mathbf{a})^\dagger \mathbf{F}\}=1508.038$, which is far from zero.

Furthermore, we implement the penalized relaxation \eqref{eqn:lag_relax_drop} with $\eta=10$, chosen large enough for the equality $\tr\{\mathbf{X}-\mathbf{F}^*\mathbf{L}_{1,\omega}(\mathbf{a})^\dagger \mathbf{F}\}=0$ to hold, and also render the mass bound active.

We numerically confirm that $\theta$ is indeed equal to the peak power $0.0216$, and there is a trace equality $\mathbf{X}=\mathbf{F}^*\mathbf{L}_{1,\omega}(\mathbf{a})^\dagger \mathbf{F}$. The optimized design $\mathbf{a}$ is shown in Figure \ref{fig:pen_single_harmonic}, with the line widths illustrating the actual value of $\mathbf{a}$.

\begin{figure}[H]
    \centering
    \includegraphics[width=0.5\textwidth]{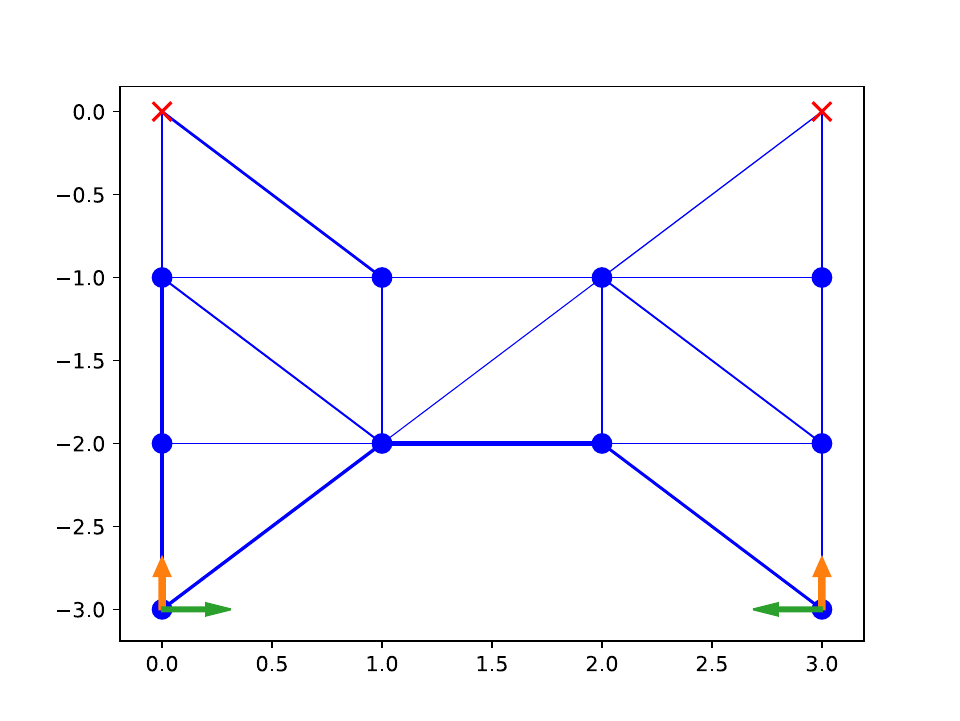}
    \caption{Optimal solutions of convex relaxation with penalty parameter $\eta=10$.}
    \label{fig:pen_single_harmonic}
\end{figure}

As we can see in the above optimization, a large enough $\eta$ results in trace equality. It remains to investigate the effect of $\eta$ on the peak power performance of the optimal solution. To this end, we solved the SDP relaxation for $80$ different values of $\eta$ ranging from $10^{-9}$ to $10$. We collected the trace differences, total masses and optimal $\theta$ of the solutions of the SDP relaxations. In addition, we also plot the actual peak power of the optimal design in an orange color, see Figure \ref{fig:many_eta}.

\begin{figure}[H]
    \centering
    \includegraphics[width=0.5\textwidth]{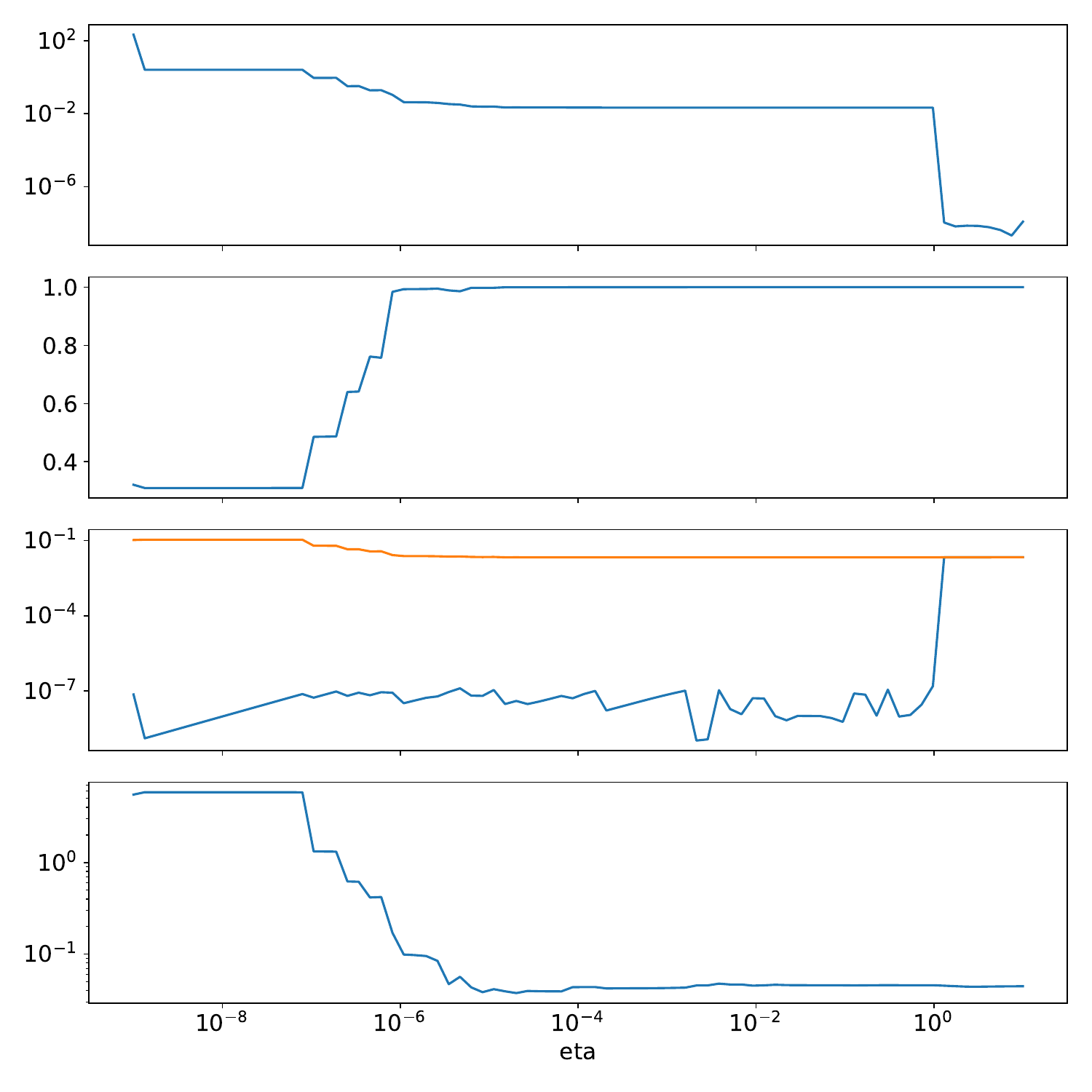}
    \caption{SDP relaxation with $80$ different values of $\eta$ for the Heidari truss with not-in-phase single harmonic load. The first figure shows the evolution of the trace difference $\tr\{\mathbf{X}-\mathbf{F}^*\mathbf{L}_{1,\omega}(\mathbf{a})^\dagger \mathbf{F}\}$ as $\eta$ ranged from $10^{-9}$ to $10$, the trace difference is indeed small as $\eta$ is large enough. The second figure shows the activation of mass constraint, the optimal structure uses all the available mass as $\eta$ is large enough. In the third figure, the blue (orange) curve shows respectively the evolution of $\theta$ (resp. the actual peak power computed at optimal design) as $\eta$ is increased. They agree only when $\eta$ is large enough. The last figure shows the evolution of the KKT residual defined in \eqref{eq:lp}.}
    \label{fig:many_eta}
\end{figure}

 For small values of $\eta$, Figure \ref{fig:many_eta} reveals that there is a nonzero trace difference at the optimal solution and the total mass constraint is not active. As a result, we observe in the early part of the graphs that the actual peak power of the optimal design is higher than that of the optimal design obtained for large $\eta$. With $\eta$ increasing, the trace equality becomes satisfied and the total mass constraint becomes active. The optimal value of $\theta$ then agrees with the actual peak power. However, the actual peak power does not improve further when the trace equality reaches $0$.

Finally, we try to estimate the optimality of the design obtained by SDP relaxations. The peak power of the truss under harmonic loads is a differentiable function of $a$ and the gradient can be obtained by sensitivity analysis as shown in Appendix \ref{appendix:d}, suggesting that it might also be possible to implement the gradient-based method to minimize the peak power. Without any additional variables, the peak power minimization can be expressed as
\begin{equation}\label{eq:nonlin}
    \begin{array}{ll}
         \underset{\mathbf{a}\in\mathbb{R}^n}{\min}& p(\mathbf{a})  \\
         \text{ s.t. }& \mathbf{a}_i\geq0, \mathbf{a}^T\mathbf{q}\leq m, 
    \end{array}
\end{equation}
where $p$ is the function of the peak power evaluated at $\mathbf{a}$, which takes $+\infty$ value if the load can not be carried at a design configuration $\mathbf{a}$. 
The formulation \eqref{eq:nonlin} makes peak power minimization a nonlinear optimization problem with linear inequality constraints. As a result, the linear constraint qualification (LCQ) holds, which implies that the Karush–Kuhn–Tucker conditions (KKT) are necessary for any local minimizer of the peak power minimization. In other words, if $\Bar{\mathbf{a}}$ is a local minimizer of the peak power minimization, then there exists a unique vector of Lagrange multipliers $\bm{\gamma}\in\mathbb{R}^n$ and $\Gamma\in\mathbb{R}$ such that the following KKT system is satisfied:
\begin{equation}
    \begin{cases}
        \nabla p(\Bar{\mathbf{a}})-\bm{\gamma} + \Gamma \bm{q} = 0,\\
        \gamma_i\geq 0, \Bar{a}_i\geq 0, \gamma_i\bar{a}_i = 0,\forall i,\\
        \Gamma \geq 0, m-\Bar{\mathbf{a}}^T\mathbf{q} \geq 0, \Gamma(m-\mathbf{a}^T\mathbf{q}) = 0.
    \end{cases}
\end{equation}

For any feasible $a$, we define the KKT residual at $\mathbf{a}$ as the solution of the linear programming problem
\begin{equation}\label{eq:lp}
    \begin{array}{ll}
         \underset{\bm{\gamma}\in\mathbb{R}^n,\Gamma\in\mathbb{R}}{\min}& \mathbf{a}^T\bm{\gamma}+\Gamma(m-\mathbf{a}^T\mathbf{q})  \\
         \text{ s.t. }& \begin{cases}
             \nabla p(\mathbf{a})-\bm{\gamma} + \Gamma \mathbf{q} = 0,\\
             \gamma_i\geq0,\forall i,\\
             \Gamma\geq 0.
         \end{cases} 
    \end{array}
\end{equation}
By the necessary condition of optimality, if $\bar{\mathbf{a}}$ is a local minimizer of \eqref{eq:nonlin}, then the solution of the linear programming problem \eqref{eq:lp} is the unique vector of Lagrange multipliers, and the optimal value is zero. If a design $\mathbf{a}$ is suboptimal but in the vicinity of a local optimal solution, then we may expect a low value of the KKT residual. The KKT residual can thus be used as an approximate optimality certificate even though it is not sufficient for non-convex problems.

For the illustrative problem considered in this section, we observe that the KKT residual is approximately $0.045$ for a sufficiently large $\eta$. Moreover, the KKT residual decreases with increasing $\eta$, as shown in the last subplot of Figure \ref{fig:many_eta}. This analysis suggests that the optimal solution with a sufficiently large $\eta$ might be indeed near to a local optimum of the peak power minimization problem \eqref{eq:nonlin}.

\subsection{Cantilever beam problem}\label{sec:cantilever}
Next, we consider a cantilever beam under a rotating tip load; see Figure \ref{fig:full_beam}. The design domain of the outer dimensions $2 \times 1$ is discretized into a uniform grid of $4\times 7$ nodes, with the left edge of the domain fixed. We connect each pair of the nodes by a bar element, resulting in $378$ elements in total. The rotating out-of-phase load is applied in the upper right corner of the beam. The components of this load act at the angular frequency of $15$ and 
$$
    \mathbf{c}_1(\mathbf{f})=-\begin{pmatrix}
        0\\
        \vdots\\
        i\\
        1\\
        \vdots\\
        0
    \end{pmatrix}/\sqrt{2}.
$$
The non-zeros components of $\mathbf{c}_1(\mathbf{f})$ correspond to the loaded degrees of freedom as shown in Figure \ref{fig:full_beam} and the load has an amplitude of $||\mathbf{c}_1(\mathbf{f})||=1$.

The truss members are made of a material with Young's modulus $E=25,000$ and density $\rho=1$. The total mass is bounded from above by $10$. In what follows, we consider convex relaxation with the penalty coefficient $\eta=10$, for which the minimization converges to the design shown in Figure \ref{fig:cantilever_pen_1}. This design contains $40$ finite elements and possesses the peak power of $0.147$ and the lowest angular eigenfreqency of $18.97$ which is strictly larger than the driving frequency of the loads $15$.

The optimization problem contained $389$ scalar design variables. The compiled problem instance had $3$ additional matrix variables containing $5295$ scalar variables, and there were $406$ constraints in total. The solution of the problem took $0.76$ seconds, suggesting a relatively good scalability of the method.

\begin{figure}[H]
    \centering
    \subfigure[]{
         \includegraphics[width=0.5\textwidth]{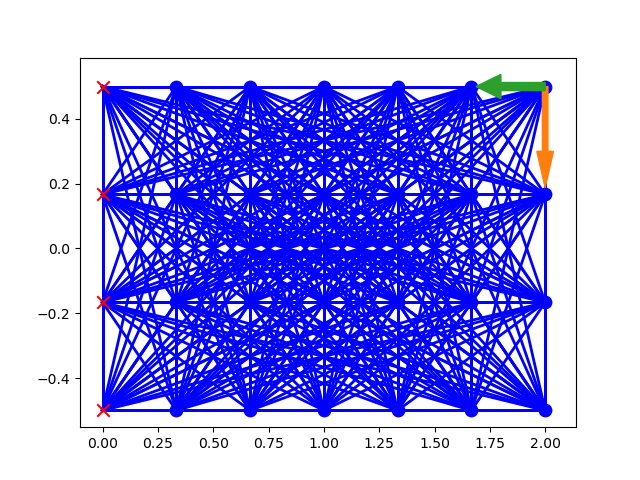}
        \label{fig:full_beam}
    }
    \hfill
    \subfigure[]{
         \includegraphics[width=0.5\textwidth]{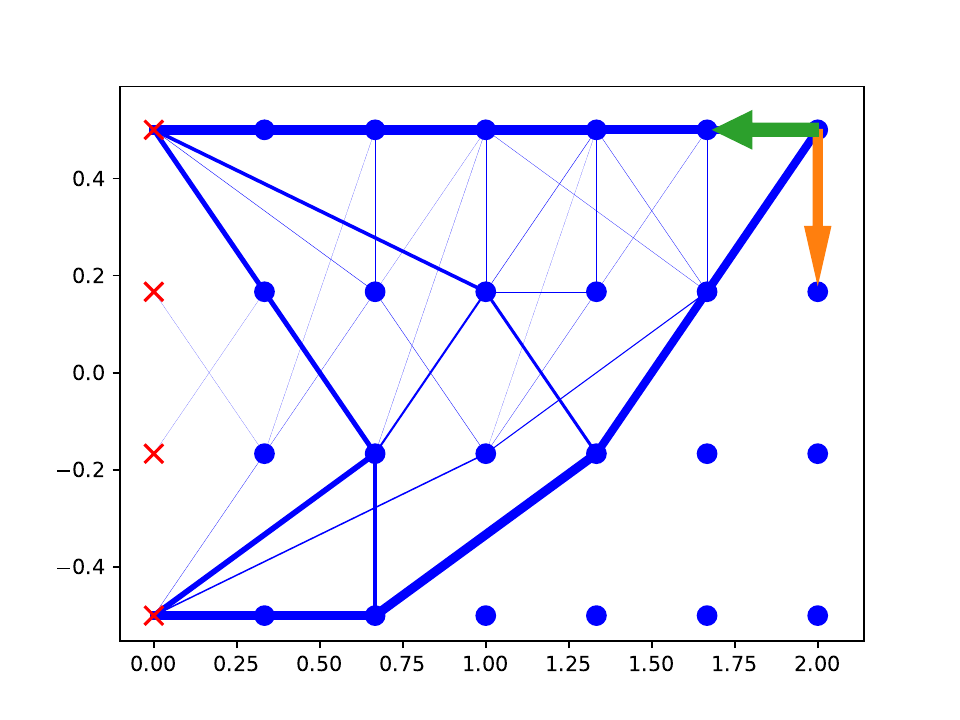}
        \label{fig:cantilever_pen_1}
    }
    \caption{Cantilever beam problem: (a) fully connected ground structure, and (b) optimal design for the penalized relaxation with $\eta=10$.}
\end{figure}

\begin{figure}[tb]
    \centering
    \includegraphics[width=0.5\textwidth]{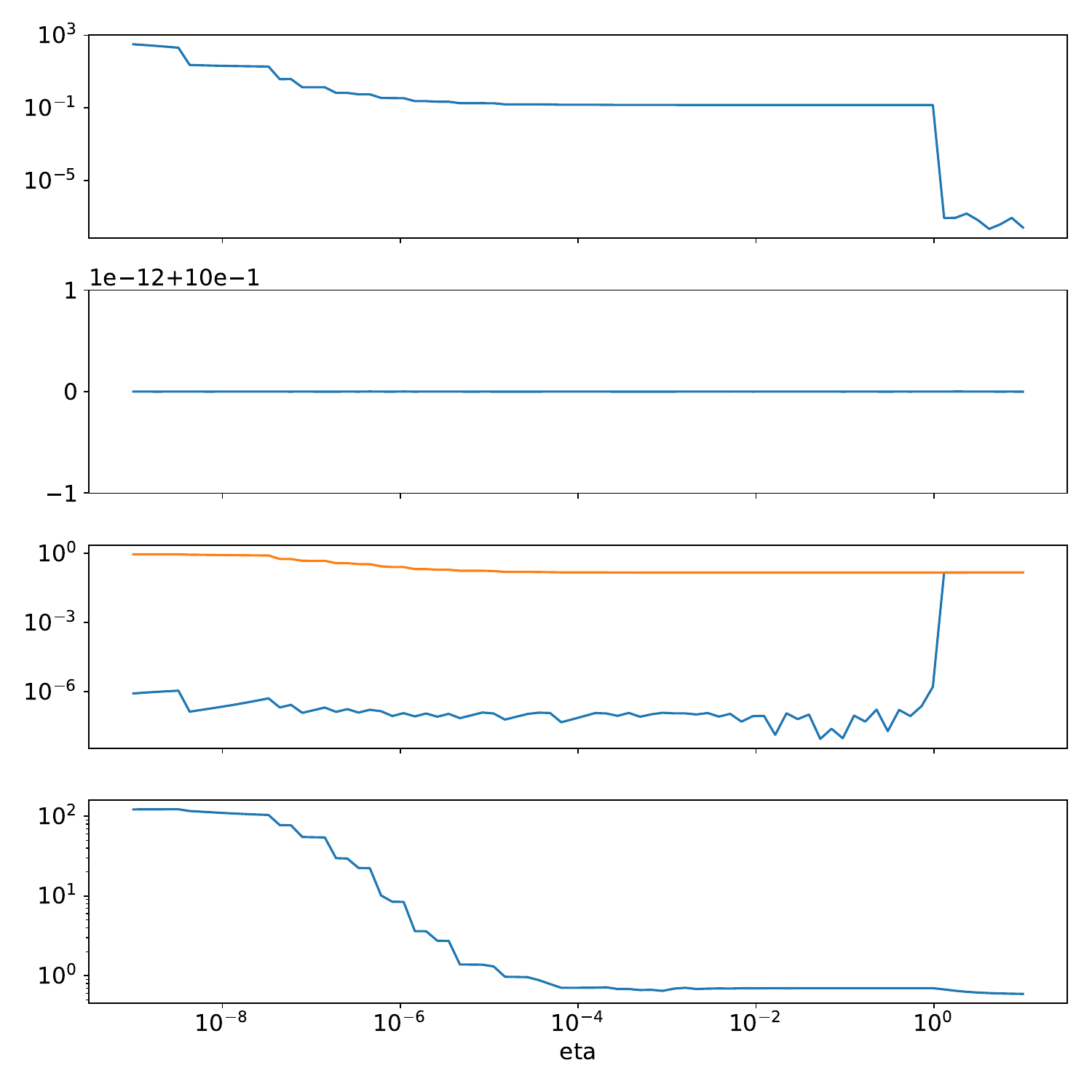}
    \caption{SDP relaxation with $80$ different values of $\eta$ for the cantilever problem. The first figure shows the evolution of the trace difference $\tr\{\mathbf{X}-\mathbf{F}^*\mathbf{L}_{1,\omega}(\mathbf{a})^\dagger \mathbf{F}\}$ as $\eta$ ranged from $10^{-9}$ to $10$, the trace difference is indeed small as $\eta$ is large enough. The second figure shows the activation of mass constraint, the optimal structure uses all the available mass as $\eta$ is large enough. In the third figure, the blue (orange) curve shows respectively the evolution of $\theta$ (resp. the actual peak power computed at optimal design) as $\eta$ is increased. They agree only when $\eta$ is large enough. The last figure shows the evolution of the KKT residual defined in \eqref{eq:lp}.}
    \label{fig:cantilever_many_eta}
\end{figure}

As in the previous example, we consider $80$ values of $\eta$ ranging from $10^{-9}$ to $10^1$ to visualize the effect of $\eta$ on the performance of the optimal design, see Figure \ref{fig:cantilever_many_eta}. From this figure we observe that the satisfaction of the trace difference becomes zero when $\eta$ is larger than $1$, and as a result, the optimal value of $\theta$ agrees with the actual value of the peak power. However, the mass bound is always active for each value of $\eta$ in this example. The KKT residual is reduced by $2$ orders of magnitude as $\eta$ increases.

\subsection{Peak power minimization under multifrequency rotating loads}\label{sec:two_rotation}

Optimizing structures under rotating loads is valuable for industrial applications. Here, we again consider the Heidari \textit{et al.}~\citep{heidari_optimization_2009} truss structure subjected to two rotating loads with different angular frequencies $\omega_1$ and $\omega_2$ simultaneously and the mass bound equal to $1$, Figure \ref{fig:two_rotation}. These loads are applied to the bottom nodes of the truss and share the same constant amplitude $1$. Their relative angle to the horizontal axis is a function of time and is equal to $\omega_1 t+\phi_1$ and $\omega_2t+\phi_2$, respectively.

\begin{figure}[H]
    \centering
    \includegraphics[width=0.5\textwidth]{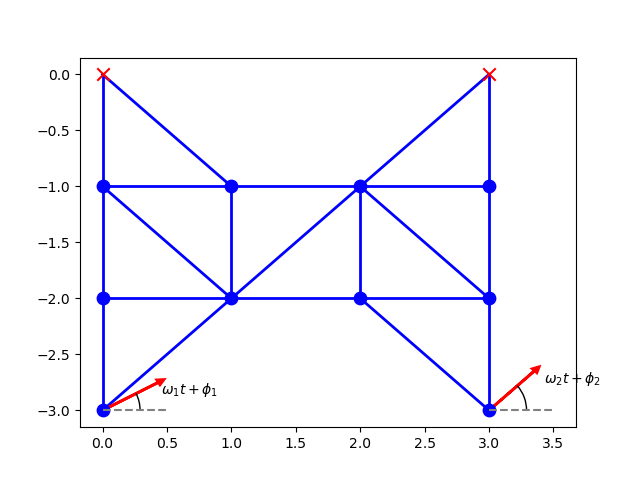}
    \caption{A truss subjected to two rotational loads of different angular frequencies $\omega_1$ and $\omega_2$.}
    \label{fig:two_rotation}
\end{figure}

In the context of the current method, we have to assume that two frequencies $\omega_1$ and $\omega_2$ are harmonic to each other, which implies that there is a basic frequency $\omega_0$ such that $\omega_1=n_1\omega_0$ and $\omega_2=n_2\omega_0$ are integer multiples of the basic frequency $\omega_0$. We assume without loss of generality that $n_2\geq n_1$, so that the time-varying load $f$ has $n_2$ Fourier components. To apply the SDP relaxation method we set the Fourier components of the load $\mathbf{f}$ to
\begin{equation}
    \forall n\geq 1, \mathbf{c}_n(\mathbf{f})=\frac{\delta_{n=n_1}e^{i\phi_1}}{2}\begin{pmatrix}
        0\\\vdots\\0\\\frac{1}{2}\\\frac{1}{2i}\\0\\0
    \end{pmatrix}+\frac{\delta_{n=n_2}e^{i\phi_2}}{2}\begin{pmatrix}
        0\\\vdots\\0\\0\\0\\\frac{1}{2}\\\frac{1}{2i}
    \end{pmatrix},
\end{equation}
where $\delta_{n=k}$ is the Kronecker delta notation such that $\delta_{n=k}$ is equal to one if and only if $n=k$.

For numerical experiments, we set $\phi_1=\frac{\pi}{2}$ and $\phi_2=-\frac{\pi}{2}$ and consider different values of $\omega_1$ and $\omega_2$, see Table \ref{table:two_rotation}. For each instance, we solve the resulting SDP relaxation with the penalty coefficient $\eta=10$ to make the trace difference approximately zero in the optimal solution. 

\begin{table*}[h]
    \centering
    \caption{Peak power minimization under multifrequency rotating loads. Parameters of the rotating loads and performance of the optimal design of the SDP relaxation with penalty. $\omega_1$ and $\omega_2$ denote the angular driving frequencies of the load and $\omega_0$ is the basic frequency such that $\omega_1=n_1 \omega_0$ and $\omega_2 = n_2 \omega_2$. In addition, $p[c(v)]$ denotes the peak power, and $\tilde{\omega}_1$ stands for the lowest resonance eigenfrequency. }
    \label{table:two_rotation}
        \begin{tabular}{|c|c|c|c|c|c|c|c|}
         \hline
         $\omega_1$ & $\omega_2$ & $\omega_0$ & $n_1$ & $n_2$ & $p[c(v)]$ & $\tilde{\omega}_1$ & KKT residual\\ 
         \hline
         7.5 & 15 & 7.5 & 1 & 2 & 0.0334 & 18.611&0.1448\\
         \hline
         10 & 15 & 5 & 2 & 3 & 0.0377 & 19.224&0.1630\\
         \hline
         12.5 & 15 & 2.5 & 5 & 6 & 0.0421 & 20.146&0.1815\\
         \hline
         13.125 & 15 & 1.875 & 7 & 8 & 0.0432 & 20.436&0.1882\\
         \hline
        \end{tabular}
\end{table*}
In each instance, we reached a sub-optimal solution with non zero KKT residual of the peak power and the mass bound was active. Because there is no comparable design available in the literature, we further compare the objective function value with that of the common initial point, the uniform truss design. Compared to this case, each of our optimized designs provides approximately $50\%$ improvement in terms of peak power. Each of the optimized designs also has its first resonance frequency strictly higher than the driving frequency $15$. The optimal designs for $n_2=2$ and $n_2=8$ are shown in Figures \ref{fig:two_rot_2} and \ref{fig:two_rot_8}.

\begin{figure}[H]
    \centering
    \subfigure[]{
         \includegraphics[width=0.45\textwidth]{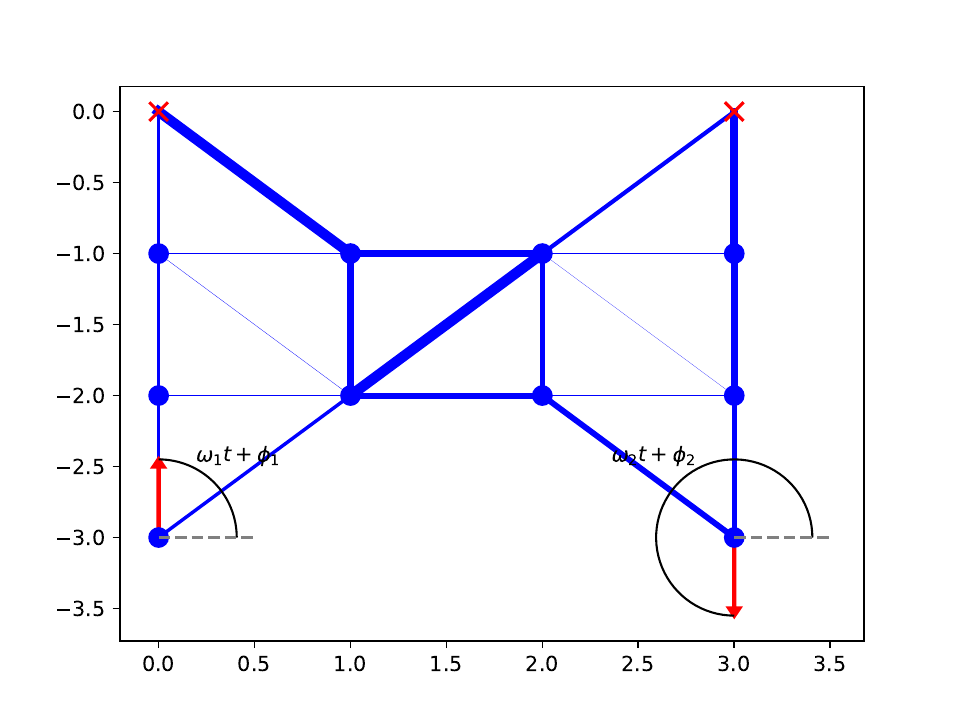}
        \label{fig:two_rot_2}
    }
    \hfill
    \subfigure[]{
         \includegraphics[width=0.45\textwidth]{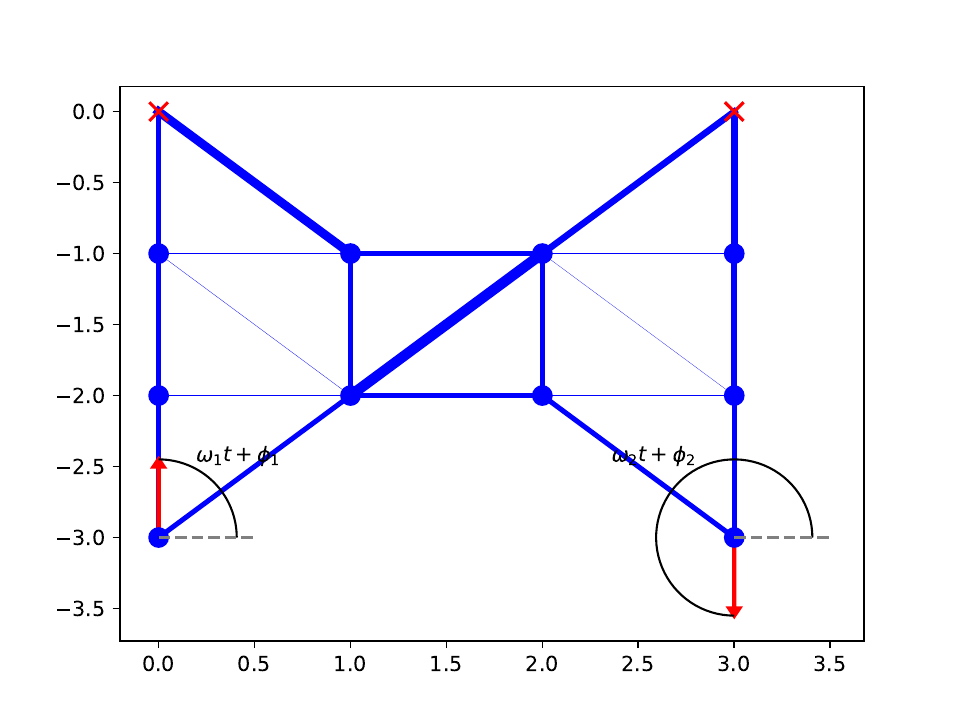}
        \label{fig:two_rot_8}
    }
    \caption{Peak power minimization under multifrequency rotating loads: (a) optimized design for $n_2=2$, and (b) for $n_2=8$.}
\end{figure}

\begin{table*}[h]
    \centering
    \caption{Problem sizes of the peak power minimization under multifrequency rotating loads. $\omega_1$ and $\omega_2$ denote the angular driving frequencies of the load and $\omega_0$ is the basic frequency such that $\omega_1=n_1 \omega_0$ and $\omega_2 = n_2 \omega_2$. In addition, $n_\mathrm{var}$ and $n_\mathrm{constr}$ denote the number of variables and constraints, respectively, and $t$ stands for the solution time.}
    \label{table:two_rotation_metrics}
    
    \begin{tabular}{|c|c|c|c|c|c|c|c|}
         \hline
         $\omega_1$ & $\omega_2$ & $\omega_0$ & $n_1$ & $n_2$ & $n_\mathrm{var}$ &$n_\mathrm{constr}$ & $t$ (s)\\ 
         \hline
         7.5 & 15 & 7.5 & 1 & 2 & 108&10 &0.094\\
         \hline
         10 & 15 & 5 & 2 & 3 & 201&14 &0.163\\
         \hline
         12.5 & 15 & 2.5 & 5 & 6 & 684& 26&1.293\\
         \hline
         13.125 & 15 & 1.875 & 7 & 8 & 1176 &34&2.567\\
         \hline
    \end{tabular}
\end{table*}
     
The size of the corresponding SDP problem for each instance is reported in Table \ref{table:two_rotation_metrics}, but the actual size of the solved problem may differ due to the remodeling done in the CVXPY package, which transforms the problem into a conic form accepted by MOSEK. This remodeling involves additional variables and constraints. For example, every LMI constraint $\mathbf{L(x)}\succeq 0$ is converted to $\mathbf{L(x)=X}$ and $\mathbf{X}\succeq 0$ with a slack variable $\mathbf{X}$ that has the same size of the LMI $\mathbf{L(x)}$ and additional linear equality constraints are introduced to ensure equality $\mathbf{L(x)=X}$. Moreover, to convert a Hermitian PSD constraint into a real symmetric PSD constraint, one also needs to include additional variables and constraints.

As a result, even when the number of ``active'' Fourier components was the same throughout all test cases, the sizes of the problems were different. Typically, for the last instance where $n_2=8$, the constraint $\begin{pmatrix}
    \mathbf{X} & \mathbf{F}^*\\
    \mathbf{F} & \mathbf{L}_{n_2,\omega_0}(\mathbf{a})
\end{pmatrix}\succeq 0$ involves a LMI of the size $8\times(21+3)=192$. To transform it into the standard formulation of MOSEK, we need to include a $384$ by $384$ real symmetric matrix variable and additional linear equality constraints. From this we observe that it is definitely needed to exploit sparsity to solve for large instances of the peak power minimization using the proposed SDP method.

\subsection{Peak power minimization under multiple frequency loads}\label{sec:multi_freq}
The convex relaxation method is also capable of optimizing structural designs under periodic loads with multiple frequency components. The theoretical construction of the minimization problem remains the same as in the single-frequency situation and is summarized in Appendix \ref{appendix:peak_power_minimization_multi}. 

A finite energy (being $L^2$ function) periodic time-varying load $\mathbf{f}$ can be decomposed into its Fourier series. In theory, if we truncate the Fourier sequence of a periodic load $\mathbf{f}$ up to a finite number of Fourier coefficients and optimize the truss using the presented method with respect to the truncated load, it approximates the optimization with respect to the original periodic load. However, not every such periodic load is suitable for the method presented here. 

For example, let us again consider the Heidari \textit{et al.} \citep{heidari_optimization_2009} problem that we have already presented in Section \ref{sec:examples_heidari}, but with a modification of the dynamic loads. In particular, we apply the loads shown in Figure \ref{fig:heidari_bc_multi}, where $f_1$ and $f_2$ are two periodic loads acting simultaneously. 
\begin{figure}[H]
\centering
\includegraphics{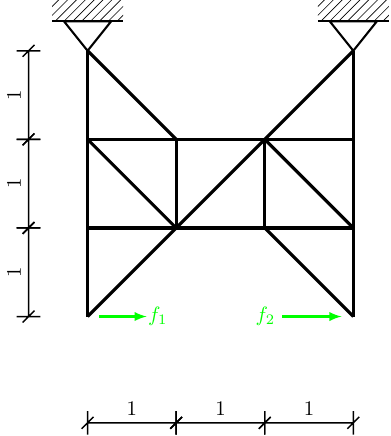}
\caption{Ground structure of the $21$-element truss structure under multiple frequency loads $f_1$ and $f_2$.}\label{fig:heidari_bc_multi}
\end{figure}
$f_1$ is a periodic function with period $T$ and amplitude $1$ such that
\begin{equation}
    f_1(t)=\begin{cases}
        -1\text{ if }\frac{-T}{2}< t\leq0,\\
        1\text{ if }0< t\leq \frac{T}{2}.
    \end{cases}
\end{equation}
The periodic load $f_2$ has the same amplitude and period with a delay $T_0$, so that $\forall t\in\mathbb{R},f_2(t)=f_1(t-T_0)$. To approximate the nodal velocity $v(t)$ for a given vector of cross section areas $a$, we solve for the following linear equations up to a finite number $N$ of Fourier coefficients:
\begin{equation}
    \mathbf{K}_{k\omega}(\mathbf{a})\mathbf{c}_k(\mathbf{v})=ik\omega \mathbf{c}_k(\mathbf{f}), \forall |k|\leq N.
\end{equation}
Because the period is $T$, the lowest harmonic frequency is $\omega=\frac{2\pi}{T}$. We calculate the Fourier coefficients of $f_1$ as
\begin{equation}
    c_k(f_1)=\begin{cases}
        0 \text{ if } k=0,\\
        \frac{1}{T}\int_{-T/2}^{T/2}f_1(t)e^{-ik\omega t}\de t=\frac{i}{\pi}\frac{(-1)^k-1}{k}\text{ otherwise.}
    \end{cases}
\end{equation}
Because $f_2$ is delayed in time, $\forall k,c_k(f_2)=e^{-ik\omega T_0}c_k(f_1)$. Finally, the overall Fourier coefficients of $\mathbf{f}(t)$ read as
\begin{equation}
    \forall k\neq 0,\mathbf c _k(\mathbf f) = \begin{pmatrix}\vdots\\ c_k(f_1)\\0\\c_k(f_2)\\0\end{pmatrix}
\end{equation}

Let $T=2(s)$ and $T_0=0.2(s)$ and consider a truss with uniform cross-section areas of the total mass $m=1$. For this case, we calculate the power function with $\mathbf{f}$ using a truncated Fourier series. In Figure \ref{fig:power_functions}, we plot the instant power of the truss in time for different numbers of Fourier coefficients, observing that the power function requires at least $15$ harmonics. Their spectrum in Figure \ref{fig:norm_of_ckv} reveals that the coefficients must cover all resonance frequencies (drawn in red). For the adopted discretization, the maximum eigenfrequency is evaluated as $233$ rad/s, so we need to include at least $\frac{233}{\omega}\approx 74$ Fourier components of $\mathbf{f}$ to cover the range of eigenfrequencies of the uniform truss.

\begin{figure}[H]
    \subfigure[]{
        \centering
         \includegraphics[width=0.5\textwidth]{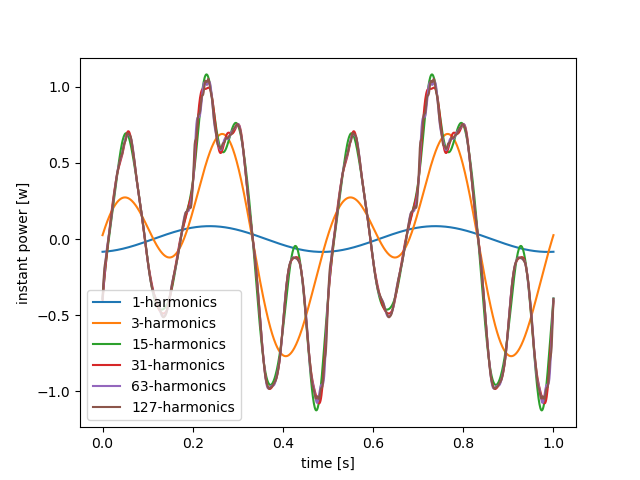}
        \label{fig:power_functions}
    }
    \subfigure[]{
        \centering
        \includegraphics[width=0.5\textwidth]{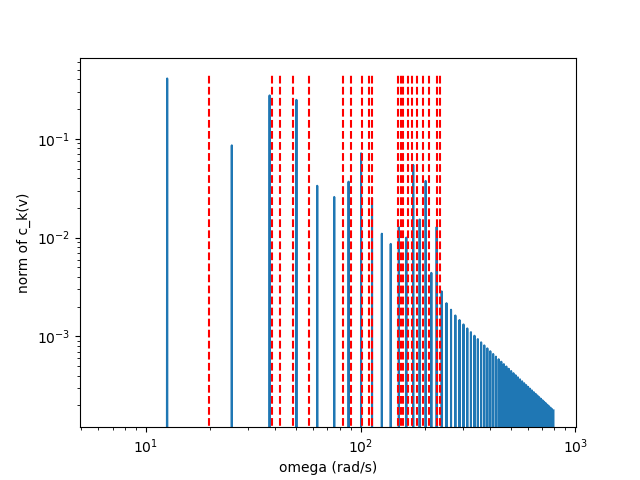}
        \label{fig:norm_of_ckv}
    }
    \caption{Response of the Heidari \textit{et al.} truss under time-varying load $\mathbf{f}$: (a) Effect of truncations of the time-varying load on the instant power, and (b) norm of the Fourier coefficients of the nodal velocity up to some finite number of frequencies in logarithmic scale. The vertical red lines indicates the position of the eigenfrequencies of the specific design. It is observed that the Fourier components of the velocity start to converge only when all the eigenfrequencies are covered which shows that for the specific time varying load it is required to include a large number of Fourier components to compute an accurate response of the structure.}
\end{figure}

Because we have assumed that the maximum frequency of the load is less than the minimum resonance frequency $\mathbf{K}_{N\omega}(\mathbf{a})\succeq 0$, and because the minimum resonance frequency of a truss under a mass constraint cannot be arbitrarily large, we can only consider small enough $N$. This limitation suggests that the current method of peak power minimization is suitable for time-varying loads of low frequencies whose Fourier components have a fast decreasing norm. This implies smooth-enough time-varying loads $\mathbf{f}$. If the time-varying loads $\mathbf{f}$ are functions with $C^k$ regularity ($k$ times derivable and with a continuous $k$-th derivative), then the norm of $\mathbf{c}_n(\mathbf{f})$ tends to zero at speed $\frac{1}{n^k}$. When $\mathbf{f}$ is smooth, we can expect that a few numbers of $\mathbf{c}_n(\mathbf{f})$ suffice to approximate the original $\mathbf{f}$. For the current example, since $\mathbf{f}$ is not smooth, it requires many Fourier components.

For the reasons outlined in the previous paragraph, in the remaining part of this section, we solve the peak power minimization problem while considering truncation of $f$ up to the third Fourier coefficient, i.e.,
\begin{equation}
    \mathbf{f}(t)=\sum_{k=-3}^3\mathbf{c}_k(\mathbf{f})e^{ik\omega t}.
\end{equation}
We lower its frequency, thus, we set $T=2(s)$ for numerical reasons. The convex relaxation with penalty of the corresponding peak power minimization can be written as the minimization problem \eqref{eqn:multiple_harmonics}  with $N=3$. Taking $\eta=10$, set so that the mass constraint in relaxation \eqref{eqn:multiple_harmonics} is active and the matrix inequality $\mathbf{X}=\mathbf{F}^*\mathbf{L}_{3,\omega}(\mathbf{a})^\dagger \mathbf{F}$ holds, the minimization converges to the design in Figure \ref{fig:opt_truss} that exhibits the instant power function shown in Figure \ref{fig:opt_power_function}. The peak power is improved by $40\%$ compared to the uniform truss. The lowest resonance frequency of the optimal truss is evaluated as $13.062$ rad/s, which is less than the highest frequency of the load $3\omega=\frac{3\times 2\pi}{T}=3\pi$ rad/s.

\begin{figure}[H]
    \centering
    \subfigure[]{
        \includegraphics[width=0.5\textwidth]{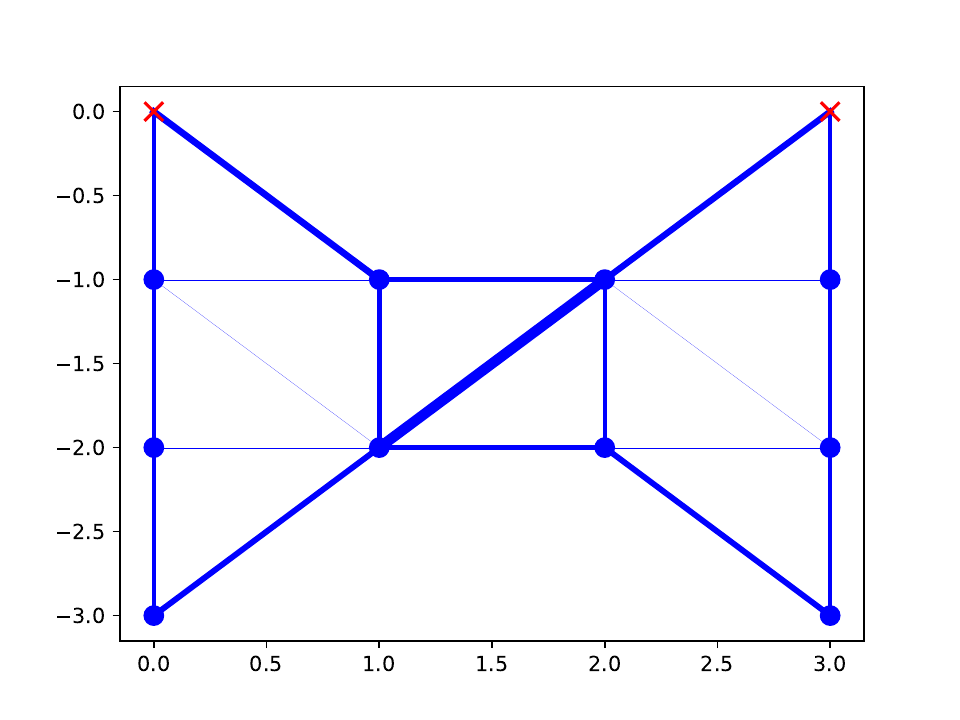}
        \label{fig:opt_truss}
    }
    \subfigure[]{
         \includegraphics[width=0.5\textwidth]{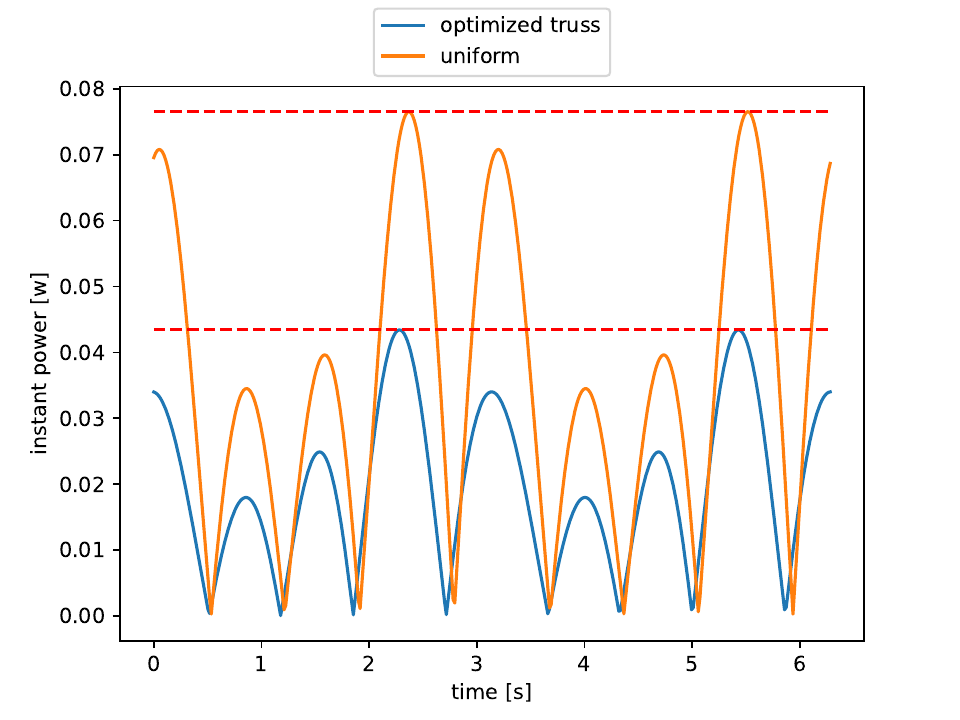}
        \label{fig:opt_power_function}
    }
    \caption{Optimized design of the Heidari \textit{et al.} truss obtained by solving a convex relaxation with penalty $\eta=10$ and $N=3$: (a) The optimized design , and (b) Comparison between the instant power function of the uniform truss and the optimized truss under harmonic loads with $3$ Fourier components.}
\end{figure}

Similar results can be obtained by including a larger number of Fourier components of $\mathbf{f}$. For $N=5$, the matrix $\mathbf{L}_{5,\omega}(\mathbf{a})$ increases in size compared to $\mathbf{L}_{3,\omega}(\mathbf{a})$ due to the larger matrix variables $\mathbf{X}$, $\mathbf{Q}_1$ and $\mathbf{Q}_2$ of the respective sizes $15\times15$, $10\times 10$, and $10\times 10$. Following the former setup, we also adopt the penalty factor $\eta=10$.

\begin{figure}[H]
    \centering
    \subfigure[]{
        \includegraphics[width=0.5\textwidth]{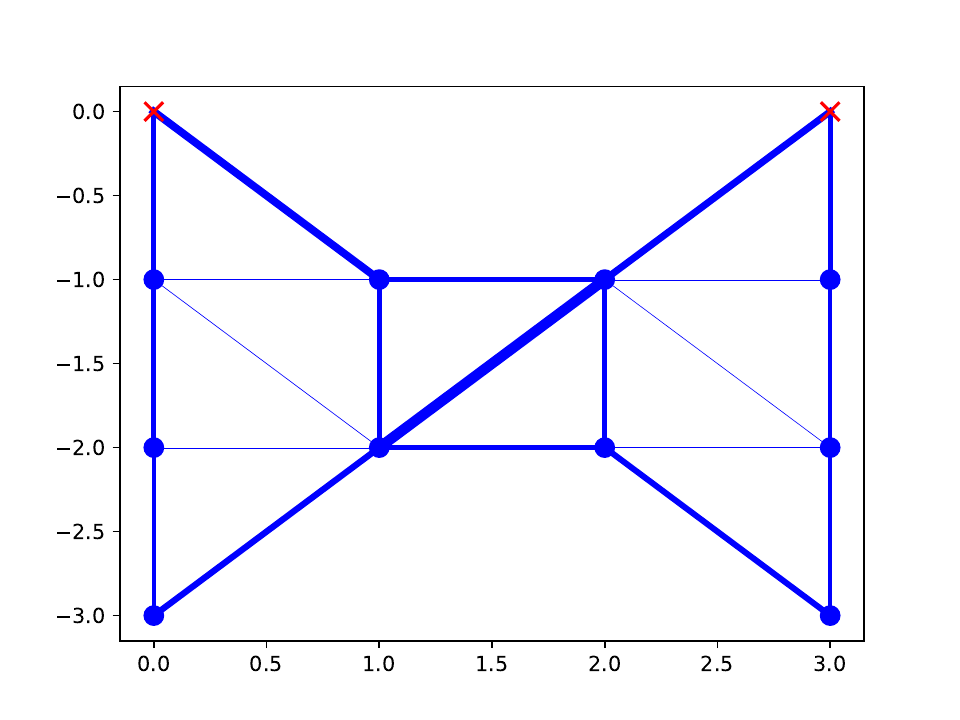}
        \label{fig:opt_truss_5}
    }
    \subfigure[]{
         \includegraphics[width=0.5\textwidth]{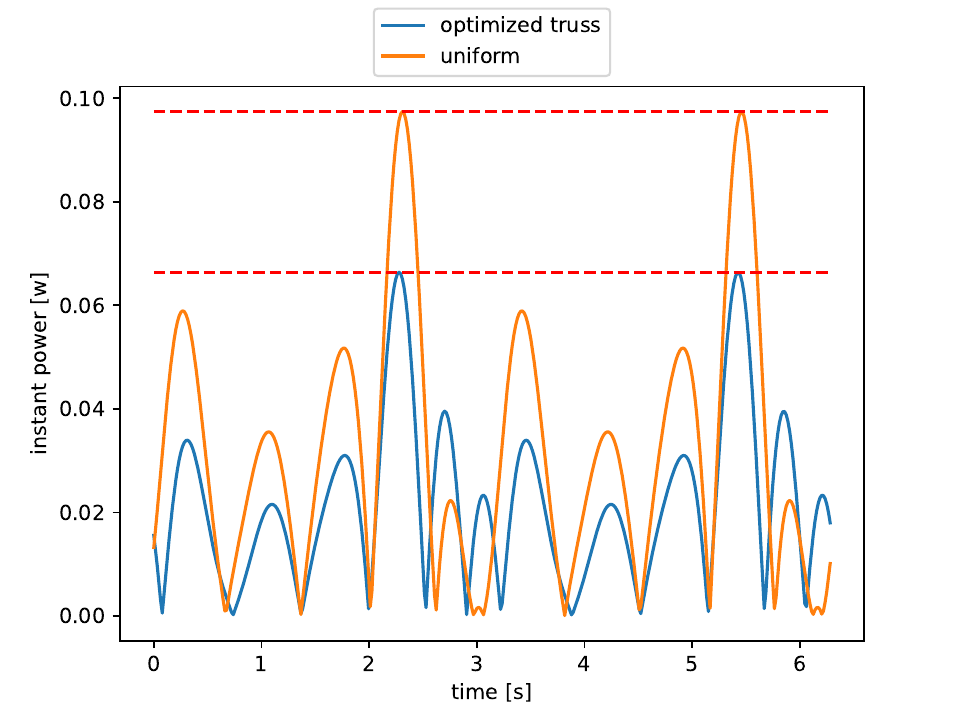}
        \label{fig:opt_power_function_5}
    }
    \caption{Optimized design of the Heidari \textit{et al.} truss obtained by solving a convex relaxation with penalty $\eta=10$ and $N=5$: (a) The optimized design, and (b) Comparison of the instant power function of the uniform truss and the optimized truss under harmonic loads with $5$ Fourier components.}
\end{figure}

Using the proposed relaxation with penalization, we reached the optimized design shown in Figure \ref{fig:opt_truss_5}. This optimized design exhibits the instant power shown in Figure \ref{fig:opt_power_function_5}. Compared to the uniform truss, it succeeded in suppressing the peak power by approximately $40\%$; an improvement similar to that achieved for the three harmonics. However, we reached a considerably increased lowest free-vibration eigenfrequency of the value $23.950$ rad/s, which is in fact strictly higher than the maximum frequency of the load, $5\pi\approx 15.7$ rad/s.

Clearly, we have observed a certain similarity between the optimal design for $N=3$ and $N=5$. To compare them, we calculate the peak power of the different designs when loaded with $\mathbf{f}$ truncated up to $3$, $5$ and $31$ Fourier components, which is sufficiently large to approximate the peak power of the original periodic load; see Table \ref{tab:compare}.

\begin{table*}[!htbp]
\centering
    \caption{Influence of the number of conserved harmonics on the optimized design for the peak power minimization under multiple frequency loads. In the current table, the first column indicates the number of Fourier components that are kept in the period load to evaluate the peak power. Starting from the second column, we present the peak power performance of the uniform truss and the optimized trusses obtained by minimizing peak power while keeping $N=3$ or $5$ Fourier components of the original periodic load. }
    \label{tab:compare}
	\begin{tabular}{|c|c|c|c|}
		\hline
		$N$ & Uniform truss & Optimized for $N=3$ & Optimized for $N=5$ \\
        \hline
        3 & 0.0765& 0.0434 &0.0451 \\
        \hline
        5 & 0.0974& 0.0674 &0.0664 \\
        \hline
        31 & 1.631& 14.887 &0.8694 \\
        \hline
	\end{tabular}
\end{table*}

In Table \ref{tab:compare}, design $N=3$ and $N=5$ provide the best performance for loads with $3$ and $5$ Fourier components. This is not surprising since they are optimized with the number of Fourier components retained in $\mathbf{f}$. When loaded with $\mathbf{f}$ where we retain a different number of Fourier components, they still outperform the uniform truss in terms of peak power. However, the effect of optimization is not clear when they are loaded by $\mathbf{f}$ with $31$ Fourier components. The performance of the design $N=3$ is worse and the design $N=5$ is nearly $50\%$ better than the uniform truss.

To explain this observation, we first computed the lowest eigenfrequencies of the two optimal designs. Furthermore, we also evaluated the smallest distance of each eigenfrequency to the multiples of $\omega$ and repeated this computation for the uniform truss; see Table \ref{tab:compare_spectrum}.

\begin{table*}[!htbp]
\centering
\caption{Spectral properties of various designs for the peak power minimization under multiple frequency loads. Each row represents a different design. The first two rows are the designed obtained by minimizing peak power while keeping $N=3$ and $5$ Fourier components of the original load. The last row is the uniform truss. For each column, we present the $n$-th eigenfrequency of the design and also the smallest distance to a multiple of the basic frequency $\omega_0$.} $N$ denotes the number of harmonics we used to approximate the time-varying load in the optimization problem, $\tilde{\omega}_n$ denotes the $n$-th angular eigenfrequency of the design, $\omega_0$ is the basic frequency, and $\ell\in \mathbb{N}$ is such that $\lvert\tilde{\omega}_n - \ell \omega_0\rvert$ is minimal.
    \label{tab:compare_spectrum}
	\begin{tabular}{|c|c|c|c|}
		\hline
		& $(\tilde{\omega}_1,\lvert\tilde{\omega}_1-\ell\omega_0\rvert)$ & $(\tilde{\omega}_2,\lvert\tilde{\omega}_2-\ell\omega_0\rvert)$ & $(\tilde{\omega}_3,\lvert\tilde{\omega}_3-\ell\omega_0\rvert)$ \\
        \hline
        Design for $N=3$ & $(13.063,~0.496$)& $(17.873,~0.976)$ & $(21.455,~0.537)$ \\
        \hline
        Design for $N=5$ & $(23.950,~1.183)$& $(26.272,~1.140)$ & $(33.805,~0.753)$ \\
        \hline
        Uniform truss & $(20.244,~1.395)$ & $(41.838,~0.998)$ & $(47.643,~0.519)$\\
        \hline
	\end{tabular}
\end{table*}

The optimal design with $N=3$ has the lowest first eigenfrequency; furthermore, the first three eigenfrequencies of $N=3$ tend to be closer to the frequencies present in the periodic load $\mathbf{f}$. As a result, there is stronger effect of resonance, which results in a worsened performance of peak power when the design is under the time varying load with many frequencies components. This numerical experiments suggests once again that the method we developed in this study is more adapted for optimizing a structure when it is under time-varying load with a small number of harmonic frequencies.

\section{Summary}\label{sec:discussion}

In this study, we have investigated the minimization of the peak power of truss structures under multiple harmonic loads whose driving frequencies are below the lowest resonance frequency of the structure itself. Starting from a general problem formulation, we have exploited the semidefinite representability of the peak power function under the equilibrium condition by exploiting the SOS positivity certificate of trigonometric polynomials. This allows us to extend the results of \citep{heidari_optimization_2009} to multiple harmonic out-of-phase loads that are decomposed into Fourier series. With this, we have developed an equivalent but generally nonconvex formulation that reduces to a convex setting for in-phase single-frequency loads.

For the general settings, convex reformulation is no longer possible. To address the nonconvex problem, we have first introduced a Lagrange-type relaxation by moving the non-convex constraint $\tr\{\mathbf{X}-\mathbf{F}^T\mathbf{L(a)}^\dagger \mathbf{F}\}\leq 0$---which is a difference of convex functions---into the objective and penalized its violation by a positive penalization factor $\eta$. Finally, we neglected the concave term $-\eta\tr\{\mathbf{F}^T\mathbf{L(a)}^\dagger \mathbf{F}\}$ and derived a convex relaxation. With $\eta$ large enough, the solution of the convex relaxation is a feasible point for the original non-convex program and hence is its suboptimal solution.

These theoretical results have been numerically illustrated with four examples. These illustrations revealed that the method indeed reduces to the case of \citep{heidari_optimization_2009} for single-frequency in-phase loads, and for more general cases the method converges to suboptimal points with small KKT residuals, thus providing almost locally optimal solutions.

Furthermore, the numerical examples also uncovered directions for potential future improvements. First, the frequency components of the load must be integer multiples of a basic frequency. Thus, if the basic frequency is small and the driving frequencies of the loads are high, the number of considered frequency components must be large. This also suggests that exploiting the sparsity \citep{kim_exploiting_2011,kocvara_decomposition_2021} in the constraint $$\begin{pmatrix}
    \mathbf{X} & \mathbf{F}^T \\ \mathbf{F} & \mathbf{L(a)}
\end{pmatrix}\succeq 0$$ would substantially help with solution of large-scale instances. Finally, the last example has revealed that the presented method applies to loads with frequencies below the first resonance, and not all periodic loads satisfy such assumption. When some of the frequencies components of the load are higher than the first resonance frequency, the peak power minimization remains an open question.

\section{Replication of results}
The \verb|Python| code for reproducing the experiments in section \ref{sec:examples} is avaible in the repository : \url{https://gitlab.com/ma.shenyuan/truss_exp}.

A \verb|conda| environment file is provided in the repository for ensuring that the package dependencies are met and for reproducing the environment in which the experiments were conducted by the authors.

\begin{appendices}
\section{Semidefinite representability of peak power under harmonic loads}\label{appendix:A}
\subsection{Nonnegativity certificate of trigonometric polynomial}
Readers can find the main results of this subsection in the book \citep{dumitrescu_positive_2017}, which covers applications in signal processing such as filter design. We are interested in the so-called trigonometric polynomials in what follows.
\begin{definition}[Trigonometric polynomial]
A trigonometric polynomial $f$ is a real-valued function with a variable in $\mathbb{T}$. It takes the form:
\begin{equation}
	f(z)=\sum_k c_k z^{-k}
\end{equation}
with finitely many non-zero coefficients $c_k$ with $k\in\mathbb{Z}$. Being real valued for all $z\in\mathbb{T}$, the sequence of coefficients must satisfy the symmetry condition $\forall k\in\mathbb{Z}, c_{-k}=c_k^*$. The degree of $f$ is the largest index $k$ of the non-zero coefficient $c_k$.
\end{definition}

A \textit{nonnegative} trigonometric polynomial is one that satisfies $f(z)\geq 0, \forall z\in\mathbb{T}$. We want to obtain a tractable characterization of the nonnegativity of a trigonometric polynomial. For univariate trigonometric polynomials, a powerful characterization is available using sum-of-squares (SOS).
\begin{definition}[SOS trigonometric polynomial]
A SOS trigonometric polynomial $f$ is such that there exists a finite family of complex polynomials $\{h_j(z)=\sum_{k=0}^N h_{jk}z^k\}_{j=1,\dots,r}$ satisfying
\begin{equation}
	\forall z, f(z)=\sum_{j=1}^{r} h_j(z)h_j(z)^*.
\end{equation}
\end{definition}
Being SOS, such a polynomial is trivially nonnegative. However, the converse is true for univariate trigonometric polynomials only; we refer the reader to \citep[Theorem 1.1]{dumitrescu_positive_2017}. The converse is false for trigonometric polynomials with more than $2$ variables. The proof relies on a careful study of the position of roots of a univariate polynomial.
\begin{theorem}[SOS certificate of nonnegativity]\label{thrm:pos_sos}
A univariate trigonometric polynomial is nonnegative \textit{iff} it is SOS.
\end{theorem}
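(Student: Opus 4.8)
The forward implication needs no work: if $f=\sum_j h_j h_j^*$ then on $\mathbb{T}$ each summand equals $\lvert h_j(z)\rvert^2\ge 0$, so $f\ge 0$. The entire content is the converse, and I would in fact establish the stronger \emph{single-square} statement $f(z)=\lvert h(z)\rvert^2$ for one polynomial $h$, which a fortiori gives an SOS; this is the Fej\'er--Riesz spectral factorization. Assume $f\not\equiv 0$. The plan is to read a factorization off the roots of an associated ordinary polynomial: set $P(z)\udef z^N f(z)=\sum_{k=-N}^{N}c_k z^{N-k}$, a genuine polynomial of degree $2N$ whose leading and trailing coefficients are $c_{-N}=\overline{c_N}$ and $c_N$; in particular $P(0)\neq 0$, so $0$ is not a root, and on $\mathbb{T}$ one has $f(z)=z^{-N}P(z)$.

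Two structural facts drive the argument. First, the coefficient symmetry $c_{-k}=\overline{c_k}$ yields the reflection identity $f(z)=\overline{f(1/\overline{z})}$ for $z\neq 0$, so the roots of $P$ occur in conjugate--reciprocal pairs: if $\zeta$ is a root with $\lvert\zeta\rvert<1$ then $1/\overline{\zeta}$ is a root with modulus $>1$, while a root on $\mathbb{T}$ is its own partner since there $1/\overline{\zeta}=\zeta$. Second, and this is where $f\ge 0$ is genuinely used, every root $\omega\in\mathbb{T}$ of $P$ has \emph{even} multiplicity: parametrizing $z=e^{it}$ makes $g(t)\udef f(e^{it})$ a real-analytic function with $g\ge 0$, and a zero of a sign-definite smooth function has even order, which transfers to the multiplicity of $\omega$ in $P$ because $z\mapsto e^{it}$ is a local diffeomorphism away from $0$.

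With the pairing and parity in hand I would build $h$ by keeping one representative of each pair. Let $\zeta_1,\dots,\zeta_r$ be the roots with $\lvert\zeta_i\rvert<1$ and let $\omega_1,\dots,\omega_s$ be the roots on $\mathbb{T}$, the latter of multiplicity $2m_j$; counting $2N=2r+2\sum_j m_j$ shows that $h(z)\udef\gamma\prod_{i}(z-\zeta_i)\prod_{j}(z-\omega_j)^{m_j}$ has degree exactly $N$. On $\mathbb{T}$, where $1/\overline{z}=z$, the reversed conjugate $\overline{h(z)}$ supplies precisely the reciprocal partners $1/\overline{\zeta_i}$ together with the remaining half of each circle factor, so that $h(z)\overline{h(z)}$ reproduces $z^{-N}P(z)=f(z)$ up to the single scalar $\lvert\gamma\rvert^2$. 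I would fix $\lvert\gamma\rvert$ by evaluating at any point with $f>0$; nonnegativity makes this value positive, so a suitable $\gamma\in\mathbb{C}$ exists, and then matching all Fourier coefficients confirms $f=\lvert h\rvert^2$.

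The main obstacle is the even-multiplicity claim for roots on $\mathbb{T}$: it is the unique step that uses $f\ge 0$ rather than mere realness of $f$, and it is exactly what converts an indefinite conjugate--reciprocal factorization into a genuine square. Everything else --- the reflection symmetry of the roots, the degree bookkeeping, and the final coefficient matching --- is routine once the pairing and the parity are established.
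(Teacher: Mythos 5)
Your proof is correct, and it is precisely the argument the paper relies on: the paper does not prove Theorem~\ref{thrm:pos_sos} itself but defers to \citep[Theorem 1.1]{dumitrescu_positive_2017}, remarking only that ``the proof relies on a careful study of the position of roots of a univariate polynomial,'' which is exactly your Fej\'er--Riesz factorization via conjugate--reciprocal root pairing and even multiplicity of roots on $\mathbb{T}$. The single-square strengthening $f=\lvert h\rvert^2$ you obtain is likewise how the cited reference establishes the result, so there is no substantive difference in approach.
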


The next theorem provides a link between the SOS characterization and PSD matrices. Testing if a trigonometric polynomial is SOS has been shown to be equivalent to the existence of a PSD matrix representation of the polynomial. Consider the following vector $\bm{\psi}$
\begin{equation}
	\bm{\psi}(z)=(1,z,\dots,z^{N})^T
\end{equation}
containing all the canonical basis polynomials up to degree $N$. A trigonometric polynomial has a so-called Gram matrix representation.
\begin{definition}
	Consider a degree $N$ trigonometric polynomial $f$. A Hermitian matrix $\mathbf{Q}\in\mathbb{S}^{N+1}$ is a Gram matrix of $f$ if \begin{equation}
		\forall z, f(z)=\bm{\psi}(z)^*\mathbf{Q}\bm{\psi}(z).
	\end{equation}
\end{definition}
There exists a family of constant matrices that establishes a link between $\mathbf{Q}$ and the coefficients of $f$. To see this, we note that
\begin{equation}
	\bm{\psi}(z)^*\mathbf{Q}\bm{\psi}(z)=\tr\left\{\bm{\psi}(z)\bm{\psi}(z)^*\mathbf{Q}\right\},
\end{equation}
where
\begin{equation}
	\bm{\psi}(z)\bm{\psi}(z)^*=
		\begin{pmatrix}
			1&z^{-1}&z^{-2}&\dots&z^{-N}\\
			z&1&z^{-1}&\ddots\\
			z^2&z&1&\ddots\\
			\vdots&\ddots&\ddots&\ddots&z^{-1}\\
			z^N&&&z&1
		\end{pmatrix}.
\end{equation}
By defining matrices $\mathbf{\Lambda}_k$ with Toeplitz structure,
\begin{equation}
	(\mathbf{\Lambda}_k)_{i,j}=1\iff j-i=k
\end{equation}
and $(\mathbf{\Lambda}_k)_{i,j}=0$ otherwise, we see that $\bm{\psi}(z)\bm{\psi}(z)^*=\sum_{k=-N}^{N}\mathbf{\Lambda}_kz^{-k}$. Thus, by identifying the coefficients, we also observe that
\begin{equation}\label{appendix:sos_def}
    \begin{split}
        &\forall z,f(z)=\sum_kc_kz^{-k}=\sum_k\tr{\mathbf{\Lambda}_k\mathbf{Q}}z^{-k}\\
    \iff&\forall k, c_k=\tr{\mathbf{\Lambda}_k\mathbf{Q}}.
    \end{split}
\end{equation}
The Gram matrix $\mathbf{Q}$ of a trigonometric polynomial is not unique. 
\begin{example}\label{example:gram}
	Let us now consider the degree-$2$ trigonometric polynomial $$f(z)=c_{-2}z^2+c_{-1}z+c_0+c_1z^{-1}+c_2z^{-2}$$ and we search a Gram matrix representation $\mathbf{Q}=\begin{pmatrix}
		q_{00}&q_{01}&q_{02}\\
		\overline{q}_{01}&q_{11}&q_{12}\\
		\overline{q}_{02}&\overline{q}_{12}&q_{22}
	\end{pmatrix}$. $\mathbf{Q}$ must satisfy:
	\begin{equation}
		\begin{split}
			&k=-2:\tr\{\mathbf{\Lambda}_{-2}\mathbf{Q}\}=q_{02}=c_{-2},\\
			&k=-1:\tr\{\mathbf{\Lambda}_{-1}\mathbf{Q}\}=q_{01}+q_{12}=c_{-1},\\
            &k=0:\tr\{\mathbf{\Lambda}_{0}\mathbf{Q}\}=q_{00} + q_{11} + q_{22}=c_0,
		\end{split}
	\end{equation}
	and so forth. The constant matrices $\mathbf{\Lambda}_k$ for degree-$2$ trigonometric polynomials are
 \begin{equation}
       \mathbf{\Lambda}_{-2}=\begin{pmatrix}
             0&0&0\\
             0&0&0\\
             1&0&0
        \end{pmatrix}~
        \mathbf{\Lambda}_{-1}=\begin{pmatrix}
             0&0&0\\
             1&0&0\\
             0&1&0
        \end{pmatrix}~
        \mathbf{\Lambda}_{0}=\begin{pmatrix}
             1&0&0\\
             0&1&0\\
             0&0&1
        \end{pmatrix}
 \end{equation}
 and so on.
\end{example}

Finally, we state the main theorem of this section.
\begin{theorem}[Semidefinite certificate of nonnegativity, {\citep[Theorem 2.5]{dumitrescu_positive_2017}}]\label{thrm:pos_sdp}
	A trigonometric polynomial is SOS \textit{iff} there exists a Gram matrix $\mathbf{Q}$ that is positive semidefinite. 
\end{theorem}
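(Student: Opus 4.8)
The plan is to prove the two implications of Theorem~\ref{thrm:pos_sdp} separately, using the Gram identity $f(z)=\bm{\psi}(z)^*\mathbf{Q}\bm{\psi}(z)$ already established above and the elementary fact that a Hermitian matrix is positive semidefinite exactly when it admits a decomposition into rank-one outer products $\mathbf{v}\mathbf{v}^*$. The point is that the statement is a purely linear-algebraic bridge between the SOS structure and the PSD Gram structure; the genuine analytic content (nonnegativity $\iff$ SOS) already resides in Theorem~\ref{thrm:pos_sos}, so here I only need to match one representation to the other.

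For the direction ``PSD Gram matrix $\Rightarrow$ SOS'', I would start from a Gram matrix $\mathbf{Q}\succeq 0$ and invoke its spectral (or Cholesky) decomposition $\mathbf{Q}=\sum_j\mathbf{v}_j\mathbf{v}_j^*$ with $\mathbf{v}_j\in\mathbb{C}^{N+1}$. Setting $h_j(z)=\mathbf{v}_j^*\bm{\psi}(z)=\sum_{k=0}^{N}\overline{(v_j)_k}z^k$, each $h_j$ has degree at most $N$ and hence has exactly the form required in the SOS definition. Substituting into the Gram identity and using $\bm{\psi}(z)^*\mathbf{v}_j=\overline{\mathbf{v}_j^*\bm{\psi}(z)}=h_j(z)^*$, I obtain $\bm{\psi}(z)^*\mathbf{v}_j\mathbf{v}_j^*\bm{\psi}(z)=h_j(z)^*h_j(z)$, so that $f(z)=\sum_j h_j(z)h_j(z)^*$ is manifestly SOS.

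For the converse ``SOS $\Rightarrow$ PSD Gram matrix'', I would run the argument backwards: given $f(z)=\sum_j h_j(z)h_j(z)^*$ with coefficient vectors $\mathbf{h}_j$ so that $h_j(z)=\mathbf{h}_j^T\bm{\psi}(z)$, I would rewrite each summand as $|h_j(z)|^2=\bm{\psi}(z)^*\bigl(\overline{\mathbf{h}_j}\,\overline{\mathbf{h}_j}^*\bigr)\bm{\psi}(z)$, using $\overline{z}=z^{-1}$ on $\mathbb{T}$ and the identity $\mathbf{h}_j^T=\overline{\mathbf{h}_j}^*$. Then $\mathbf{Q}=\sum_j\overline{\mathbf{h}_j}\,\overline{\mathbf{h}_j}^*$ is a sum of rank-one Hermitian PSD terms, hence Hermitian with $\mathbf{Q}\succeq 0$, and by construction $f(z)=\bm{\psi}(z)^*\mathbf{Q}\bm{\psi}(z)$. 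The previously derived coefficient correspondence $c_k=\tr\{\mathbf{\Lambda}_k\mathbf{Q}\}$ then confirms that this $\mathbf{Q}$ reproduces the coefficients of $f$, so it is indeed a PSD Gram matrix of $f$.

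The computations are routine, and I do not expect a genuine obstacle once Theorem~\ref{thrm:pos_sos} is available. The only point requiring care is the bookkeeping of complex conjugates---tracking which factor carries the conjugation in $\bm{\psi}(z)^*\mathbf{v}=\overline{\mathbf{v}^*\bm{\psi}(z)}$ and using $\overline{z}=z^{-1}$---together with noting that the Gram matrix is not unique, so I claim only the \emph{existence} of a PSD Gram matrix rather than uniqueness.
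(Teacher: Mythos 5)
Your proof is correct and follows essentially the same route as the paper: for the direction ``PSD Gram matrix $\Rightarrow$ SOS'' the paper uses exactly your construction, decomposing $\mathbf{Q}\succeq 0$ spectrally and reading off $h_j(z)$ from $\bm{\psi}(z)^*\mathbf{v}_j$, while the converse you spell out (packaging the coefficient vectors of the $h_j$ into rank-one Hermitian outer products summing to a PSD Gram matrix) is the standard reversal that the paper leaves to the citation of Dumitrescu. Your conjugation bookkeeping, including $\bm{\psi}(z)^*\mathbf{v}=\overline{\mathbf{v}^*\bm{\psi}(z)}$ and $\overline{z}=z^{-1}$ on $\mathbb{T}$, checks out, so there is no gap.
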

For reader's information, we notice that the smallest number $r$ of complex polynomials $h_j$ such that $f(z)=\sum_j h_j(z)h_j(z)^*$ is upper bounded by the rank of PSD matrices $Q$ satisfying linear constraints \eqref{appendix:sos_def}. Since $\mathbf{Q}$ has size $(N+1)\times (N+1)$, $r$ is at most $N+1$. To find an SOS representation with minimal number of $h_j$ is equivalent to a rank minimization problem.

Suppose that $f$ is SOS and that we have found $\mathbf{Q}\succeq 0$ being a Gram matrix of $f$. By the spectral decomposition of $\mathbf{Q}=\sum_j \sigma_j \mathbf{v}_j\mathbf{v}_j^*$ where $\sigma_j$ are the positive eigenvalues of $\mathbf{Q}$ and $\mathbf{v}_j$ the corresponding unitary family of eigenvectors, the following equation allows to find complex polynomials $h_j$:
\begin{align}
    f(z)&=\bm{\psi}(z)^*\mathbf{Q}\bm{\psi}(z)\\
    &=\sum_j \sigma_j (\bm{\psi}(z)^*\mathbf{v}_j)(\bm{\psi}(z)^*\mathbf{v}_j)^*
\end{align}
Thus $h_j(z)=\sqrt{\sigma_j}\bm{\psi}(z)^*\mathbf{v}_j$.

\subsection{Semidefinite representability of peak power functional}\label{sec:sdr_peak_power}
In this subsection, we show that the peak power is SDr, using the second type of semidefinite representation in Definition \ref{def:sdprep_2}, with respect to the Fourier coefficients of the nodal velocities $\mathbf{v}$. We recall that the peak power is a function defined as
\begin{equation}
	p[\mathbf{c(v)}]=\underset{t\in[0,\frac{2\pi}{\omega}]}{\max}\left\{\left\vert\sum_{n,m}\mathbf{c}_n(\mathbf{f})^T\mathbf{c}_m(\mathbf{v}) e^{i(n+m)\omega t}\right\vert\right\}.
\end{equation}
By collecting the coefficients of the complex exponential, we observe that
\begin{equation}
    \begin{split}
        &\sum_{n,m}\mathbf{c}_n(\mathbf{f})^T\mathbf{c}_m(\mathbf{v}) e^{i(n+m)\omega t}\\
        =&\sum_{k=-2N}^{2N}\sum_{n=-N}^{N}\mathbf{c}_{k-n}(\mathbf{f})^T\mathbf{c}_{n}(\mathbf{v})e^{ik\omega t},
    \end{split}
\end{equation}
with the inner summation conveniently written over $-N$ to $N$. We have extended the sequence $\{\mathbf{c}_n(\mathbf{f})\}$ for $|n|$ strictly larger than $N$ by $0$. In addition, set $z=e^{-i\omega t}$ and define $q_k = \sum_{n=-N}^{N}\mathbf{c}_{k-n}(\mathbf{f})^T\mathbf{c}_{n}(\mathbf{v})$. Then, it follows that
\begin{equation}
	p[\mathbf{c}(\mathbf{v})]=\underset{z\in\mathbb{T}}{\max}\left\vert q(z) \right\vert,
\end{equation}
where $q(z)=\sum_{k=-2N}^{2N}q_kz^{-k}$. We have $q_0=0$, since the average power over one period delivered by the harmonic load is equal to $0$.

Next, we assume $(\theta,\mathbf{c}(\mathbf{v}))$ in the epigraph of $p[.]$. Then, we have
\begin{equation}
    \begin{split}
        &\theta\geq p[\mathbf{c}(\mathbf{v})]\\
        \iff& \theta\pm q \text{ are positive trigonometric polynomials}.
    \end{split}
\end{equation}
By Theorem \ref{thrm:pos_sdp} and using the notation in Example~\ref{example:gram}, and since the coefficients of $\theta\pm q$ depend linearly on $\theta$ and $\mathbf{c}(\mathbf{v})$, $(\theta,\mathbf{c}(\mathbf{v}))$ is in the epigraph of the peak power function \textit{iff} there exists $\mathbf{Q}_1\in\mathbb{S}^{2N+1}_+$ and $\mathbf{Q}_2\in\mathbb{S}^{2N+1}_+$ that satisfy the equality constraints of \eqref{eqn:sdr_second_type}. Therefore, the peak power function is SDr. More precisely, $\theta\pm q$ are positive \textit{iff} the following two sequences of linear equations hold:
\begin{subequations}\label{eqn:sdr_peak_power}
    \begin{equation}
	\exists \mathbf{Q}_1\in\mathbb{S}^{2N+1}_+, \begin{cases}
	    \theta = \tr\{\mathbf{\Lambda}_0\mathbf{Q}_1\},\\
        q_k = \tr\{\mathbf{\Lambda}_k\mathbf{Q}_1\},\forall k\neq 0,
	\end{cases}
    \end{equation}
    \begin{equation}
    		\exists \mathbf{Q}_2\in\mathbb{S}^{2N+1}_+,\begin{cases}
	    -\theta = \tr\{\mathbf{\Lambda}_0\mathbf{Q}_2\},\\
        -q_k = \tr\{\mathbf{\Lambda}_k\mathbf{Q}_2\},\forall k\neq 0.
	   \end{cases}
    \end{equation}
\end{subequations}
Finally, we insert the expression $q_k=\sum_{n=-N}^{N} \mathbf{c}_{k-n}(\mathbf{f})^T\mathbf{c}_n(\mathbf{v})$.
\begin{example}[Single harmonic $N=1$]
	We take the Hermitian matrices $\mathbf{Q}_1,\mathbf{Q}_2\in\mathbb{S}^3$ and $\mathbf{f}(t)=\mathbf{c}_{-1}(\mathbf{f})e^{-i\omega t}+\mathbf{c}_1(\mathbf{f})e^{i\omega t}$:
	\begin{equation}
		\begin{split}
			q_{-2}&=\mathbf{c}_{-1}(\mathbf{f})^T\mathbf{c}_{-1}(\mathbf{u}),\\
			q_{-1}&=\mathbf{c}_{-1}(\mathbf{f})^T\mathbf{c}_{0}(\mathbf{u})+\mathbf{c}_{0}(\mathbf{f})^T\mathbf{c}_{-1}(\mathbf{u})=0,\\
			q_{1}&=\mathbf{c}_{1}(\mathbf{f})^T\mathbf{c}_{0}(\mathbf{u})+\mathbf{c}_{0}(\mathbf{f})^T\mathbf{c}_{1}(\mathbf{u})=0,\\
			q_{2}&=\mathbf{c}_{1}(\mathbf{f})^T\mathbf{c}_{1}(\mathbf{u}).
		\end{split}
	\end{equation}
\end{example}
\begin{example}[Double harmonic $N=2$]
	We take $\mathbf{Q}_1,\mathbf{Q}_2\in\mathbb{S}^5$ and $$f(t)=\mathbf{c}_{-2}(\mathbf{f})e^{-2i\omega t}+\mathbf{c}_{-1}(\mathbf{f})e^{-i\omega t}+\mathbf{c}_{1}(\mathbf{f})e^{i\omega t}+\mathbf{c}_{2}(\mathbf{f})e^{2i\omega t}.$$
	Therefore, it holds that
	\begin{equation}
		\begin{split}
			q_{-4}&=\mathbf{c}_{-2}(\mathbf{f})^T\mathbf{c}_{-2}(\mathbf{u})+\mathbf{c}_{-3}(\mathbf{f})^T\mathbf{c}_{-1}(\mathbf{u})+\mathbf{c}_{-4}(\mathbf{f})^T\mathbf{c}_{0}(\mathbf{u})\\
                &+\mathbf{c}_{-5}(\mathbf{f})^T\mathbf{c}_{1}(\mathbf{u})+\mathbf{c}_{-6}(\mathbf{f})^T\mathbf{c}_{2}(\mathbf{u})\\
				&=\mathbf{c}_{-2}(\mathbf{f})^T\mathbf{c}_{-2}(\mathbf{u}),\\
			q_{-3}&=\mathbf{c}_{-1}(\mathbf{f})^T\mathbf{c}_{-2}(\mathbf{u})+\mathbf{c}_{-2}(\mathbf{f})^T\mathbf{c}_{-1}(\mathbf{u})+\mathbf{c}_{-3}(\mathbf{f})^T\mathbf{c}_{0}(\mathbf{u})\\
                &+\mathbf{c}_{-4}(\mathbf{f})^T\mathbf{c}_{1}(\mathbf{u})+\mathbf{c}_{-5}(\mathbf{f})^T\mathbf{c}_{2}(\mathbf{u})\\
				&=\mathbf{c}_{-1}(\mathbf{f})^T\mathbf{c}_{-2}(\mathbf{u})+\mathbf{c}_{-2}(\mathbf{f})^T\mathbf{c}_{-1}(\mathbf{u}),\\
			q_{-2}&=\mathbf{c}_{0}(\mathbf{f})^T\mathbf{c}_{-2}(\mathbf{u})+\mathbf{c}_{-1}(\mathbf{f})^T\mathbf{c}_{-1}(\mathbf{u})+\mathbf{c}_{-2}(\mathbf{f})^T\mathbf{c}_{0}(\mathbf{u})\\&+\mathbf{c}_{-3}(\mathbf{f})^T\mathbf{c}_{1}(\mathbf{u})+\mathbf{c}_{-4}(\mathbf{f})^T\mathbf{c}_{2}(\mathbf{u})\\
				&=\mathbf{c}_{-1}(\mathbf{f})^T\mathbf{c}_{-1}(\mathbf{u}),\\
			q_{-1}&=\mathbf{c}_{1}(\mathbf{f})^T\mathbf{c}_{-2}(\mathbf{u})+\mathbf{c}_{0}(\mathbf{f})^T\mathbf{c}_{-1}(\mathbf{u})+\mathbf{c}_{-1}(\mathbf{f})^T\mathbf{c}_{0}(\mathbf{u})\\&+\mathbf{c}_{-2}(\mathbf{f})^T\mathbf{c}_{1}(\mathbf{u})+\mathbf{c}_{-3}(\mathbf{f})^T\mathbf{c}_{2}(\mathbf{u})\\
				&=\mathbf{c}_{1}(\mathbf{f})^T\mathbf{c}_{-2}(\mathbf{u})+\mathbf{c}_{-2}(\mathbf{f})^T\mathbf{c}_1(\mathbf{u}).\\
		\end{split}
	\end{equation}
	Then, by the symmetry condition, we receive
		\begin{equation}
		\begin{split}
			q_{4}&=\mathbf{c}_{2}(\mathbf{f})^T\mathbf{c}_{2}(\mathbf{u}),\\
			q_{3}&=\mathbf{c}_{1}(\mathbf{f})^T\mathbf{c}_{2}(\mathbf{u})+\mathbf{c}_{2}(\mathbf{f})^T\mathbf{c}_{1}(\mathbf{u}),\\
			q_{2}&=\mathbf{c}_{1}(\mathbf{f})^T\mathbf{c}_{1}(\mathbf{u}),\\
			q_{1}&=\mathbf{c}_{-1}(\mathbf{f})^T\mathbf{c}_{2}(\mathbf{u})+\mathbf{c}_{2}(\mathbf{f})^T\mathbf{c}_{-1}(\mathbf{u}).\\
		\end{split}
	\end{equation}	
\end{example}
\section{Independence of peak power of non-physical solution}\label{appendix:B}
To state the independence of peak power with non-physical solution, we need to introduce the generalized eigenvalue problem related to $\mathbf{K(a)}$ and $\mathbf{M(a)}$. The generalized eigenvalue problem consists of finding $\lambda$ and $w$ such that
\begin{equation}\label{eqn:eigenproblem}
    \mathbf{K(a)w}=\lambda \mathbf{\mathbf{M(a)}\mathbf{w}}.
\end{equation}
By the construction of $\mathbf{M(a)}$ and $\mathbf{K(a)}$, we can show that $\ker{\mathbf{M(a)}}\subseteq\ker{\mathbf{K(a)}}$ (see \citep{achtziger_structural_2008}). Any solution $w$ in the null space of $\mathbf{M(a)}$ is thus a trivial solution of the eigenproblem. To eliminate this case, we define the so-called well-defined eigenproblem:
\begin{definition}[Well-defined eigenproblem]
    The well-defined solution of \eqref{eqn:eigenproblem} is a pair of $(\lambda,w)$ such that
    \begin{equation}
        \mathbf{K(a)}w=\lambda \mathbf{M(a)}\mathbf{w}
    \end{equation}
    and $\mathbf{w}\notin\ker{\mathbf{M(a)}}$.
\end{definition}
For any feasible $\mathbf{a}$, we can show that there is a minimal solution $\lambda_{\min}(\mathbf{a})>0$ of the well-defined eigenproblem. The following proposition is true for $\lambda_{\min}(\mathbf{a})$:
\begin{proposition}[LMI characterization of minimal solution, {\citep[Proposition 2.2(c)]{achtziger_structural_2008}}]
    \begin{equation}
        \forall\omega, \omega^2<\lambda_{\min}(\mathbf{a})\iff -\omega^2\mathbf{M(a)}+\mathbf{K(a)}\succeq 0.
    \end{equation}
\end{proposition}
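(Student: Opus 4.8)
The plan is to convert the matrix inequality $\mathbf{K}_\omega(\mathbf{a})\succeq 0$ into a spectral condition on the well-defined generalized eigenproblem. First I would read the LMI as nonnegativity of the quadratic form
\[
\mathbf{w}^T\mathbf{K}_\omega(\mathbf{a})\mathbf{w}=\mathbf{w}^T\mathbf{K(a)}\mathbf{w}-\omega^2\,\mathbf{w}^T\mathbf{M(a)}\mathbf{w}\qquad\text{for all }\mathbf{w},
\]
and exploit the inclusion $\ker\mathbf{M(a)}\subseteq\ker\mathbf{K(a)}$ recalled above. Writing $\mathbb{R}^n=V\oplus V^\perp$ with $V=\ker\mathbf{M(a)}$, both $\mathbf{M(a)}$ and $\mathbf{K(a)}$ are symmetric and annihilate $V$, so $\mathbf{K}_\omega(\mathbf{a})$ is block-diagonal with respect to this splitting and vanishes identically on $V$. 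Consequently $\mathbf{K}_\omega(\mathbf{a})\succeq 0$ on $\mathbb{R}^n$ if and only if its compression to $V^\perp$ is positive semidefinite, which removes the trivial kernel from the problem.

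On $V^\perp$ the compression $\mathbf{M}_0$ of $\mathbf{M(a)}$ is positive definite, since $\mathbf{w}^T\mathbf{M(a)}\mathbf{w}=0$ forces $\mathbf{w}\in V$. I would then factor $\mathbf{M}_0=\mathbf{L}\mathbf{L}^T$ and, writing $\mathbf{K}_0$ for the compression of $\mathbf{K(a)}$ to $V^\perp$, apply the congruence $\mathbf{K}_\omega(\mathbf{a})|_{V^\perp}=\mathbf{L}\,(\widetilde{\mathbf{K}}-\omega^2\mathbf{I})\,\mathbf{L}^T$, where $\widetilde{\mathbf{K}}=\mathbf{L}^{-1}\mathbf{K}_0\mathbf{L}^{-T}$ is symmetric and PSD. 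Because congruence preserves inertia, $\mathbf{K}_\omega(\mathbf{a})|_{V^\perp}\succeq 0$ iff $\widetilde{\mathbf{K}}-\omega^2\mathbf{I}\succeq 0$ iff $\omega^2\le\lambda_{\min}(\widetilde{\mathbf{K}})$. The key identification, established through the substitution $\mathbf{y}=\mathbf{L}^T\mathbf{w}$, is that the eigenvalues of $\widetilde{\mathbf{K}}$ are exactly the well-defined generalized eigenvalues of $(\mathbf{K(a)},\mathbf{M(a)})$; hence $\lambda_{\min}(\widetilde{\mathbf{K}})=\lambda_{\min}(\mathbf{a})$. Equivalently, one recognizes $\lambda_{\min}(\mathbf{a})=\min_{\mathbf{w}\notin V}\tfrac{\mathbf{w}^T\mathbf{K(a)}\mathbf{w}}{\mathbf{w}^T\mathbf{M(a)}\mathbf{w}}$ as the generalized Rayleigh quotient minimum (Courant--Fischer).

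The step I expect to be the main obstacle is pinning down the boundary frequency $\omega^2=\lambda_{\min}(\mathbf{a})$. The congruence argument delivers $\mathbf{K}_\omega(\mathbf{a})\succeq 0\iff\omega^2\le\lambda_{\min}(\mathbf{a})$, so the strict inequality in the statement is the genuinely nondegenerate case: it corresponds to $\mathbf{K}_\omega(\mathbf{a})$ being positive definite on $V^\perp$, that is $\ker\mathbf{K}_\omega(\mathbf{a})=\ker\mathbf{M(a)}$. At $\omega^2=\lambda_{\min}(\mathbf{a})$ the smallest eigenvalue $\lambda_{\min}(\widetilde{\mathbf{K}})-\omega^2$ is attained and equal to zero on the compact unit sphere of $V^\perp$, so the dynamic stiffness matrix acquires a kernel strictly larger than $V$ --- precisely the onset of resonance at the fundamental free-vibration frequency. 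I would therefore present the equivalence as $\omega^2<\lambda_{\min}(\mathbf{a})\iff \mathbf{K}_\omega(\mathbf{a})\succeq 0$ with $\ker\mathbf{K}_\omega(\mathbf{a})=\ker\mathbf{M(a)}$, the degenerate equality case being the one carved out. Finally, that $\lambda_{\min}(\mathbf{a})>0$, needed for the statement to be meaningful, is equivalent to $\widetilde{\mathbf{K}}\succ 0$, i.e. to $\ker\mathbf{K(a)}=\ker\mathbf{M(a)}$, which holds for the feasible designs considered here.
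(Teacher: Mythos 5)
The paper itself contains no proof of this proposition: it is imported verbatim, with a citation, from \citep[Proposition~2.2(c)]{achtziger_structural_2008}, so your argument can only be compared against that source and the paper's surrounding usage. Your proof is correct, and it is essentially the standard argument underlying the cited result: split $\mathbb{R}^n=\ker\mathbf{M(a)}\oplus(\ker\mathbf{M(a)})^{\perp}$, use the inclusion $\ker\mathbf{M(a)}\subseteq\ker\mathbf{K(a)}$ to make $\mathbf{K}_\omega(\mathbf{a})$ block-diagonal with a vanishing block, factor the positive-definite compression of $\mathbf{M(a)}$ as $\mathbf{L}\mathbf{L}^T$, and pass by congruence to an ordinary symmetric eigenvalue problem whose spectrum is exactly the set of well-defined generalized eigenvalues. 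All of these steps check out, including the identification $\lambda_{\min}(\widetilde{\mathbf{K}})=\lambda_{\min}(\mathbf{a})$ via the substitution $\mathbf{y}=\mathbf{L}^T\mathbf{w}$.

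Your boundary analysis moreover catches a genuine discrepancy in the statement as printed. What the congruence argument yields is
\begin{equation*}
  \mathbf{K}_\omega(\mathbf{a})\succeq 0 \iff \omega^2\leq\lambda_{\min}(\mathbf{a}),
\end{equation*}
with a non-strict inequality: at $\omega^2=\lambda_{\min}(\mathbf{a})$ the matrix $\mathbf{K}_\omega(\mathbf{a})$ is still positive semidefinite, only with kernel strictly larger than $\ker\mathbf{M(a)}$, so the implication from the matrix inequality to the strict bound $\omega^2<\lambda_{\min}(\mathbf{a})$ fails there. The non-strict form is the one stated in the cited reference and is also how the paper itself uses the result in Section~3 (``less than or equal to the smallest non-singular free-vibration angular eigenfrequency''), so the strict inequality in the displayed proposition should be read as a transcription slip rather than as the intended claim. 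Your refined strict-inequality version---$\omega^2<\lambda_{\min}(\mathbf{a})$ iff $\mathbf{K}_\omega(\mathbf{a})\succeq 0$ together with $\ker\mathbf{K}_\omega(\mathbf{a})=\ker\mathbf{M(a)}$---is correct, and your closing remark is consistent as well: any design feasible for the paper's optimization problems satisfies $\mathbf{K}_{N\omega}(\mathbf{a})\succeq 0$ with $N\omega>0$, which by the proven equivalence forces $\lambda_{\min}(\mathbf{a})\geq N^2\omega^2>0$.
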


\begin{lemma}[Characterization of the range of dynamic stiffness matrices]\label{lemma:same_range}
    If $(\mathbf{a},\mathbf{c(v)})$ is feasible for the peak power minimization, then all the dynamic stiffness matrices $\mathbf{K}_{n\omega}(\mathbf{a})=-n^2\omega^2\mathbf{M(a)}+\mathbf{K(a)}$ have the same range.
\end{lemma}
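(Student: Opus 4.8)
The plan is to reduce the statement about ranges to a statement about kernels. Since each $\mathbf{K}_{n\omega}(\mathbf{a})$ is symmetric (it is a real linear combination of the symmetric matrices $\mathbf{M(a)}$ and $\mathbf{K(a)}$), its range is the orthogonal complement of its kernel, $\range \mathbf{K}_{n\omega}(\mathbf{a}) = (\kernel \mathbf{K}_{n\omega}(\mathbf{a}))^\perp$. Hence it suffices to prove that $\kernel \mathbf{K}_{n\omega}(\mathbf{a})$ does not depend on $n$; I would in fact establish the stronger claim that $\kernel \mathbf{K}_{n\omega}(\mathbf{a}) = \kernel \mathbf{M(a)}$ for every index with $|n| \le N$, which immediately gives the common range $(\kernel \mathbf{M(a)})^\perp$.

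For the inclusion $\kernel \mathbf{M(a)} \subseteq \kernel \mathbf{K}_{n\omega}(\mathbf{a})$, I would use the structural property $\kernel \mathbf{M(a)} \subseteq \kernel \mathbf{K(a)}$ recalled from \citep{achtziger_structural_2008}: if $\mathbf{M(a)}\mathbf{w}=0$, then also $\mathbf{K(a)}\mathbf{w}=0$, so that $\mathbf{K}_{n\omega}(\mathbf{a})\mathbf{w} = -n^2\omega^2\mathbf{M(a)}\mathbf{w}+\mathbf{K(a)}\mathbf{w}=0$. This inclusion holds for all $n$ and uses no feasibility assumption.

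The reverse inclusion is where the feasibility hypothesis enters, and I expect it to be the crux. Suppose $\mathbf{w}\in\kernel \mathbf{K}_{n\omega}(\mathbf{a})$ but $\mathbf{w}\notin\kernel \mathbf{M(a)}$. Then $\mathbf{K(a)}\mathbf{w}=n^2\omega^2\mathbf{M(a)}\mathbf{w}$ with $\mathbf{w}\notin\kernel\mathbf{M(a)}$, so $(n^2\omega^2,\mathbf{w})$ is a \emph{well-defined} solution of the generalized eigenproblem \eqref{eqn:eigenproblem}, forcing $n^2\omega^2 \ge \lambda_{\min}(\mathbf{a})$. I would then derive a contradiction from feasibility: the constraint $\mathbf{K}_{N\omega}(\mathbf{a})\succeq 0$ together with the LMI characterization of $\lambda_{\min}(\mathbf{a})$ gives $N^2\omega^2 < \lambda_{\min}(\mathbf{a})$, whence $n^2\omega^2 \le N^2\omega^2 < \lambda_{\min}(\mathbf{a})$ for every $n$ with $1\le|n|\le N$; for $n=0$ the strict inequality $0 < \lambda_{\min}(\mathbf{a})$ is guaranteed directly by $\lambda_{\min}(\mathbf{a})>0$. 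In either case $n^2\omega^2 < \lambda_{\min}(\mathbf{a})$, contradicting $n^2\omega^2\ge\lambda_{\min}(\mathbf{a})$. Hence no such $\mathbf{w}$ exists and $\kernel \mathbf{K}_{n\omega}(\mathbf{a})\subseteq\kernel\mathbf{M(a)}$.

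Combining the two inclusions yields $\kernel \mathbf{K}_{n\omega}(\mathbf{a}) = \kernel \mathbf{M(a)}$ independently of $n$, and therefore $\range \mathbf{K}_{n\omega}(\mathbf{a}) = (\kernel\mathbf{M(a)})^\perp$ for all $|n|\le N$, which is the asserted common range. The only delicate point is the \emph{strictness} of the inequality furnished by the cited proposition: it is precisely what excludes the borderline case $n^2\omega^2 = \lambda_{\min}(\mathbf{a})$, so I would take care to invoke the characterization as a strict inequality, and to treat the $n=0$ case separately via $\lambda_{\min}(\mathbf{a})>0$.
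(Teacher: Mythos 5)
Your proof is correct and follows essentially the same route as the paper's: both arguments show $\kernel \mathbf{K}_{n\omega}(\mathbf{a})=\kernel\mathbf{M(a)}$ by noting that a kernel vector outside $\kernel\mathbf{M(a)}$ would make $n^2\omega^2$ a well-defined generalized eigenvalue, contradicting the strict bound $n^2\omega^2\le N^2\omega^2<\lambda_{\min}(\mathbf{a})$ furnished by the feasibility constraint $\mathbf{K}_{N\omega}(\mathbf{a})\succeq 0$ and the LMI characterization, and then pass to ranges by orthogonal complementarity. Your write-up is in fact slightly more careful than the paper's, since you state explicitly the reverse inclusion $\kernel\mathbf{M(a)}\subseteq\kernel\mathbf{K}_{n\omega}(\mathbf{a})$ (via $\kernel\mathbf{M(a)}\subseteq\kernel\mathbf{K(a)}$) and the $n=0$ case, both of which the paper leaves implicit.
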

\begin{proof}
$(\mathbf{a},\mathbf{c(v)})$ is feasible iff the following constraints are satisfied:
\begin{align}
    a_i\geq0, \mathbf{a}^T\mathbf{q}\leq m, -N^2\omega^2\mathbf{M(a)}+\mathbf{K(a)}\succeq 0,\\
    (-k^2\omega^2\mathbf{M(a)}+\mathbf{K(a)})\mathbf{c}_k(v)=ik\omega \mathbf{c}_k(\mathbf{f}),\forall k.
\end{align}

By the LMI constraint, each of the $n^2\omega^2$ is strictly less than $\lambda_{\min}(\mathbf{a})$. For a fixed $n$, let us consider the solution of \begin{equation}
    \mathbf{K}_{n\omega}(\mathbf{a})\mathbf{c}_n(\mathbf{v})=in\omega \mathbf{c}_n(\mathbf{f}),
\end{equation}
It must have the form $\mathbf{c}_n(v)=in\mathbf{K}_{n\omega}(\mathbf{a})^\dagger \mathbf{c}_n(\mathbf{f})+\mathbf{c}_0$ for any $\mathbf{c}_0\in\ker{\mathbf{K}_{n\omega}(\mathbf{a})}$. If $\mathbf{c}_0\notin \ker{\mathbf{M(\mathbf{a})}}$, then $n^2\omega^2$ would be a well-defined solution of the generalized eigenvalue problem. However, this is not possible since $n^2\omega^2\leq N^2\omega^2<\lambda_{\min}(\mathbf{a})$.

What we have shown is, in fact, that $\ker{\mathbf{K}_{n\omega}}(\mathbf{a})=\ker{\mathbf{M(\mathbf{a})}}$ for all $n$. By orthogonal complementarity, the range of $\ker{\mathbf{K}_{n\omega}}(\mathbf{a})$ is the same for all $n$.    
\end{proof}

Finally, if all $\mathbf{c}_n(\mathbf{f})$ are in the range of $\mathbf{K}_{n\omega}(\mathbf{a})$, 
\begin{equation}
    \forall m,\mathbf{c}_m(\mathbf{f})^T\mathbf{c}_n(v)=\mathbf{c}_m(\mathbf{f})^T\mathbf{K}_{n\omega}(\mathbf{a})^\dagger \mathbf{c}_n(\mathbf{f}).
\end{equation}
The peak power is thus independent of the nonphysical solution, since its semidefinite representation is also independent of the nonphysical solution.

\section{Convex relaxation of the peak power minimization}\label{appendix:peak_power_minimization_multi}

We recall the peak power minimization under equilibrium equation constraint
\begin{equation}\label{eqn:pp_appendix_C}\tag{$\mathcal{P}_{\text{pp}}$}
    \begin{split}
        \underset{\mathbf{a},\mathbf{c(v)},\theta,\mathbf{Q}_1,\mathbf{Q}_2}{\min} ~&~ \theta\\
        \text{ s.t } & \left\{\begin{array}{l}
            a_i\geq0, \mathbf{a}^T\mathbf{q}\leq m,\\
            \mathbf{K}_{N\omega}(\mathbf{a})\succeq 0,\\
            \mathbf{K}_{k\omega}(\mathbf{a})\mathbf{c}_k(\mathbf{v})=ik\omega \mathbf{c}_k(\mathbf{f}),\forall k,\\
            \theta = \tr\{\mathbf{\Lambda}_0\mathbf{Q}_1\}=-\tr\{\mathbf{\Lambda}_0\mathbf{Q}_2\},\\
            q_k = \tr\{\mathbf{\Lambda}_k\mathbf{Q}_1\}=-\tr\{\mathbf{\Lambda}_k\mathbf{Q}_2\},\forall k\neq 0,\\
            \mathbf{Q}_1,\mathbf{Q}_2\succeq 0.
        \end{array}\right.
    \end{split}
\end{equation}
The coefficients $q_k$ are written as
\begin{equation}
    \forall k\in\{-2N,\dots,2N\},q_k=\sum_{n=-N}^{N}\mathbf{c}_{k-n}(\mathbf{f})^T\mathbf{c}_{n}(\mathbf{v}),
\end{equation}
with the convention that $\mathbf{c}_m(\mathbf{f})=0$ if $|m|>N$. Due to the symmetry condition $\mathbf{c}_{-k}(\mathbf{v})=\overline{\mathbf{c}_k(\mathbf{v})}$, we only need to consider equilibrium equations $\mathbf{K}_{k\omega}(\mathbf{a})\mathbf{c}_k(\mathbf{v})=ik\omega \mathbf{c}_k(\mathbf{f})$ for $k\in\{1,\dots,N\}$. Thus,
\begin{equation}
    \begin{split}
        q_k&=\sum_{n=-N}^{N}\mathbf{c}_{k-n}(\mathbf{f})^T\mathbf{c}_{n}(\mathbf{v})\\
		&=\sum_{n=1}^{N}\mathbf{c}_{n-k}(\mathbf{f})^*\mathbf{c}_{n}(\mathbf{v})+\mathbf{c}_{n}(\mathbf{v})^*\mathbf{c}_{k+n}(\mathbf{f})\\
		&=\sum_{n=1}^{N}\mathbf{c}_{n-k}(\mathbf{f})^*K_{n\omega}(\mathbf{a})^\dagger in\omega \mathbf{c}_{n}(\mathbf{f})\\&+\sum_{n=1}^{N}(in\omega \mathbf{c}_{n}(\mathbf{f}))^*K_{n\omega}(\mathbf{a})^\dagger \mathbf{c}_{k+n}(\mathbf{f}).
    \end{split}
\end{equation}
Let us define $D\mathbf{c}(\mathbf{f})=\begin{pmatrix}
    i\omega \mathbf{c}_1(\mathbf{f})^T & \dots & iN\omega \mathbf{c}_N(\mathbf{f})^T
\end{pmatrix}^T$, a column vector with block structure such that each block corresponds to the vector $in\omega \mathbf{c}_n(\mathbf{f})$. Similarly, we consider $\mathbf{c}(\mathbf{f})$, a column block vector with block $\mathbf{c}_n(\mathbf{f})$. Using the shifting operator $T_k\mathbf{c}(\mathbf{f})$ such that the $n$-th block of $T_k\mathbf{c}(\mathbf{f})$ is $\mathbf{c}_{n+k}(\mathbf{f})$, we can see $q_k$ as
\begin{equation}
    \begin{split}
        q_k &= (T_{-k}\mathbf{c}(\mathbf{f}))^*\mathbf{L}_{N,\omega}(\mathbf{a})^\dagger D\mathbf{c}(\mathbf{f})\\
        &+(D\mathbf{c}(\mathbf{f}))^*\mathbf{L}_{N,\omega}(\mathbf{a})^\dagger T_{k}\mathbf{c}(\mathbf{f}),
    \end{split}
\end{equation}
where $\mathbf{L}_{N,\omega}(\mathbf{a})=\diag\{\mathbf{K}_\omega(\mathbf{a}),\dots,\mathbf{K}_{N\omega}(\mathbf{a})\}$. By convention, $\mathbf{c}_m(\mathbf{f})$ is zero for $m>N$. We see that $T_kc(\mathbf{f})=0$ for any $k\geq N$. Thus, we define the matrix $\mathbf{F}$ with $3N$ columns such that the $k$-th column is
\begin{equation}
    \forall k\in\{1,\dots,3N\}, \mathbf{F}_k= \begin{cases}
        T_{N-k}\mathbf{c}(\mathbf{f})& \text{ if $k\neq N$}\\
        D\mathbf{c}(\mathbf{f})& \text{ if $k=N$}
    \end{cases}
\end{equation}

For example, consider $N=3$ and $c(\mathbf{f})=\begin{pmatrix}
    \mathbf{c}_1(\mathbf{f})^T & \mathbf{c}_2(\mathbf{f})^T & \mathbf{c}_3(\mathbf{f})^T
\end{pmatrix}^T$. The matrix $\mathbf{F}$ is then
\begin{equation}
   \mathbf{F}=\begin{pmatrix}
    \mathbf{c}_3(\mathbf{f})&\mathbf{c}_2(\mathbf{f})&i\omega \mathbf{c}_1(\mathbf{f})&\mathbf{c}_0(\mathbf{f})&\overline{\mathbf{c}_1(\mathbf{f})}&\dots&0\\
    0&\mathbf{c}_3(\mathbf{f})&2i\omega \mathbf{c}_2(\mathbf{f})&\mathbf{c}_1(\mathbf{f})&\mathbf{c}_0(\mathbf{f})&\dots&0\\
    0&0&3i\omega \mathbf{c}_3(\mathbf{f})&\mathbf{c}_2(\mathbf{f})&\mathbf{c}_1(\mathbf{f})&\dots&\overline{\mathbf{c}_3(\mathbf{f})}
    \end{pmatrix}.
\end{equation}
If $\mathbf{X}=\mathbf{F}^*\mathbf{L}_{N,\omega}(\mathbf{a})^\dagger \mathbf{F}$, then there are constant matrices $\mathbf{C}_k$ such that
\begin{equation}
    q_k = \tr\{\mathbf{C}_k\mathbf{X}\}.
\end{equation}
By introducing the Hermitian variable $\mathbf{X}$ of size $3N\times 3N$, the peak power minimization problem \eqref{eqn:pp_appendix_C} is equivalent to
\begin{equation}
    \begin{split}
        \underset{\mathbf{a},\mathbf{X},\theta,\mathbf{Q}_1,\mathbf{Q}_2}{\min} ~&~ \theta\\
        \text{ s.t } & \left\{\begin{array}{l}
            a_i\geq0, \mathbf{a}^T\mathbf{q}\leq m,\\
            \mathbf{K}_{N\omega}(\mathbf{a})\succeq 0,\\
            \mathbf{c}_k(\mathbf{f})\in\range\{\mathbf{K}_{k\omega}(\mathbf{a})\},\forall k,\\
            \mathbf{X}=\mathbf{F}^*\mathbf{L}_{N,\omega}(\mathbf{a})^\dagger \mathbf{F},\\
            \theta = \tr\{\mathbf{\Lambda}_0\mathbf{Q}_1\}=-\tr\{\mathbf{\Lambda}_0\mathbf{Q}_2\},\\
            \tr\{\mathbf{C}_k\mathbf{X}\} = \tr\{\mathbf{\Lambda}_k\mathbf{Q}_1\},\forall k\neq 0,\\
            \tr\{\mathbf{C}_k\mathbf{X}\} =-\tr\{\mathbf{\Lambda}_k\mathbf{Q}_2\},\forall k\neq 0,\\
            \mathbf{Q}_1,\mathbf{Q}_2\succeq 0,\\
        \end{array}\right.
    \end{split}.
\end{equation}
Finally, using the techniques presented in Sections \ref{sec:conv_relax} and \ref{sec:lag_relax}, the previous problem is at first equivalent to
\begin{equation}
    \begin{split}
        \underset{\mathbf{a},\mathbf{X},\theta,\mathbf{Q}_1,\mathbf{Q}_2}{\min} ~&~ \theta\\
        \text{ s.t } & \left\{\begin{array}{l}
             a_i\geq0, \mathbf{a}^T\mathbf{q}\leq m\\
            \begin{pmatrix}
                \mathbf{X} & \mathbf{F}^* \\
                \mathbf{F} & \mathbf{L}_{N,\omega}(\mathbf{a})
            \end{pmatrix}\succeq 0,\\
            \mathbf{X}=\mathbf{F}^*\mathbf{L}_{N,\omega}(\mathbf{a})^\dagger \mathbf{F},\\
            \theta = \tr\{\mathbf{\Lambda}_0\mathbf{Q}_1\}=-\tr\{\mathbf{\Lambda}_0\mathbf{Q}_2\},\\
            \tr\{\mathbf{C}_k\mathbf{X}\} = \tr\{\mathbf{\Lambda}_k\mathbf{Q}_1\},\forall k\neq 0,\\
            \tr\{\mathbf{C}_k\mathbf{X}\} =-\tr\{\mathbf{\Lambda}_k\mathbf{Q}_2\},\forall k\neq 0,\\
            \mathbf{Q}_1,\mathbf{Q}_2\succeq 0,\\
        \end{array}\right.
    \end{split}
\end{equation}
Note that the constraint $\mathbf{K}_{N\omega}(\mathbf{a})\succeq 0$ has disappeared from the optimization problem however it doesn't change the constraint set of the variables since the constraint $\begin{pmatrix}
    \mathbf{X} & \mathbf{F}^* \\
    \mathbf{F} & \mathbf{L}_{N,\omega}(\mathbf{a})
\end{pmatrix}\succeq 0$ would imply that $\mathbf{K}_{N\omega}(\mathbf{a})\succeq 0$ by the block structure. As a result, the Lagrange relaxation reads as
\begin{equation}
    \begin{split}
        \underset{\mathbf{a},\mathbf{X},\theta,\mathbf{Q}_1,\mathbf{Q}_2}{\min} ~&~ \theta+\eta\tr\{\mathbf{X}-\mathbf{F}^*\mathbf{L}_{N,\omega}(\mathbf{a})^\dagger \mathbf{F}\}\\
        \text{ s.t } & \left\{\begin{array}{l}
             a_i\geq0, \mathbf{a}^T\mathbf{q}\leq m\\
            \begin{pmatrix}
                \mathbf{X} & \mathbf{F}^* \\
                \mathbf{F} & \mathbf{L}_{N,\omega}(\mathbf{a})
            \end{pmatrix}\succeq 0,\\
            \theta = \tr\{\mathbf{\Lambda}_0\mathbf{Q}_1\}=-\tr\{\mathbf{\Lambda}_0\mathbf{Q}_2\},\\
            \tr\{\mathbf{C}_k\mathbf{X}\} = \tr\{\mathbf{\Lambda}_k\mathbf{Q}_1\},\forall k\neq 0,\\
            \tr\{\mathbf{C}_k\mathbf{X}\} =-\tr\{\mathbf{\Lambda}_k\mathbf{Q}_2\},\forall k\neq 0,\\
            \mathbf{Q}_1,\mathbf{Q}_2\succeq 0,\\
        \end{array}\right.
    \end{split}.
\end{equation}
By discarding the non convex term $\tr\{\mathbf{F}^*\mathbf{L}_{N,\omega}(\mathbf{a})^\dagger \mathbf{F}\}$ we find the convex relaxation with penalty which evaluates as
\begin{equation}\label{eqn:multiple_harmonics}
    \begin{split}
        \underset{\mathbf{a},\mathbf{X},\theta,\mathbf{Q}_1,\mathbf{Q}_2}{\min} ~&~ \theta+\eta\tr\{\mathbf{X}\}\\
        \text{ s.t } & \left\{\begin{array}{l}
             a_i\geq0, \mathbf{a}^T\mathbf{q}\leq m,\\
            \begin{pmatrix}
                \mathbf{X} & \mathbf{F}^* \\
                \mathbf{F} & \mathbf{L}_{N,\omega}(\mathbf{a})
            \end{pmatrix}\succeq 0,\\
            \theta = \tr\{\mathbf{\Lambda}_0\mathbf{Q}_1\}=-\tr\{\mathbf{\Lambda}_0\mathbf{Q}_2\},\\
            \tr\{\mathbf{C}_k\mathbf{X}\} = \tr\{\mathbf{\Lambda}_k\mathbf{Q}_1\},\forall k\neq 0\\
            \tr\{\mathbf{C}_k\mathbf{X}\}=-\tr\{\mathbf{\Lambda}_k\mathbf{Q}_2\},\forall k\neq 0,\\
            \mathbf{Q}_1,\mathbf{Q}_2\succeq 0,\\
        \end{array}\right.
    \end{split}.
\end{equation}

\section{Sensitivity analysis of the peak power}\label{appendix:d}
We propose here a way to compute the sensitivity of the peak power function with respect to the design variables $a$ using the adjoint model. It will thus be possible to use a general non-linear programming method to find local minima of the peak power, but this is not considered in this study.

To compute the sensitivity of a function $\phi[\mathbf{c(v)}]$ with respect to $a$ such that $\mathbf{K}_{n\omega}\mathbf{c}_n(\mathbf{v})=in\omega \mathbf{c}_n(\mathbf{f})$ for $n\in\{-N,\dots,N\}$, we need to write the augmented function $\tilde{\phi}[\mathbf{c(v)}]$ with $\bm{\lambda}_n = \overline{\bm{\lambda}_{-n}}$:
\begin{equation}
    \begin{split}
        \tilde{\phi}[\mathbf{c(v)}]&=\phi[\mathbf{c(v)}]\\
        &+\sum_{n=-N}^N\bm{\lambda}_n^T(\mathbf{K}_{n\omega}(\mathbf{a})\mathbf{c}_n(\mathbf{v})-in\omega \mathbf{c}_n(\mathbf{f})).
    \end{split}
\end{equation}
For the sensitivity with respect to $a_j$, we differentiate and obtain
\begin{align}
    \frac{\partial\tilde{\phi}}{\partial a_j}&=\sum_{n=-N}^{N}\nabla_{\mathbf{c}_n(\mathbf{v})}\phi[\mathbf{c(v)}]^T\frac{\partial \mathbf{c}_n(\mathbf{v})}{\partial a_j}\\
    &+\sum_{n=-N}^N\bm{\lambda}_n^T\frac{\partial \mathbf{K}_{n\omega}(\mathbf{a})}{\partial a_j}\mathbf{c}_n(\mathbf{v})+\bm{\lambda}_n^T\mathbf{K}_{n\omega}(\mathbf{a})\frac{\partial \mathbf{c}_n(\mathbf{v})}{\partial a_j}.
\end{align}
If $\bm{\lambda}_n$ are the solutions of the adjoint models $\mathbf{K}_{n\omega}(\mathbf{a})\bm{\lambda}_n=-\nabla_{\mathbf{c}_n(v)}\phi[\mathbf{c(v)}]$ for $n\in\{-N,\dots,N\}$, then 
$$\forall j, \frac{\partial{\phi}}{\partial a_j}=\frac{\partial\tilde{\phi}}{\partial a_j}=\sum_{n=-N}^N\bm{\lambda}_n^T\frac{\partial \mathbf{K}_{n\omega}(\mathbf{a})}{\partial a_j}\mathbf{c}_n(\mathbf{v}).$$

For the peak power function $p[\mathbf{c(v)}]$ in particular, it is the optimal value of the linear SDP problem
\begin{equation}\label{eqn:peak_power_sdp}
    \begin{array}{cc}
        \underset{\theta,\mathbf{Q}_0,\mathbf{Q}_1}{\min} & \theta  \\
         \text{s.t. } & \begin{cases}
             \theta = \tr\{\bm{\Lambda}_0\mathbf{Q}_1\}=-\tr\{\bm{\Lambda}_0\mathbf{Q}_2\},\\
            q_k = \tr\{\bm{\Lambda}_k\mathbf{Q}_1\}=-\tr\{\bm{\Lambda}_k\mathbf{Q}_2\},\forall k\neq 0,\\
            \mathbf{Q}_1,\mathbf{Q}_2\succeq 0,
         \end{cases} 
    \end{array}
\end{equation}
where we insert the expression $q_k=\sum_{n=-N}^{N}\mathbf{c}_{k-n}(\mathbf{f})^T\mathbf{c}_{n}(\mathbf{v})$, which is linear in $\mathbf{c(v)}$. To compute the sensitivity of the peak power, we need to compute the sensitivity of a linear SDP program with respect to its data defining the linear constraints. This problem has recently been investigated in \citep{agrawal2020differentiating}, which resulted in implementation of the technique in the Python package \textbf{CVXPY} \citep{diamond2016cvxpy}.

Let $\mathcal{P}(\mathbf{q})$ denotes the optimal value of \eqref{eqn:peak_power_sdp} as a function of $\mathbf{q}$. by the chain rule we obtain that
\begin{equation}
    \nabla_{\mathbf{c}_n(v)}p[\mathbf{c(v)}]=\sum_{k=-2N}^{2N}\frac{\partial\mathcal{P}}{\partial q_k}\mathbf{c}_{k-n}(\mathbf{f}).
\end{equation}
Thus, we can compute the sensitivity $\frac{\partial p[\mathbf{c(v)}]}{\partial a_j}$ using the adjoint model method. At each fixed design vector $a$, we first solve for direct models $\mathbf{K}_{n\omega}(\mathbf{a})\mathbf{c}_n(v)=in\omega \mathbf{c}_n(\mathbf{f})$, followed by the solution of adjoint models $\forall n\in\{-N,\dots,N\}$:
\begin{equation}\label{eqn:adjoint_model}
    \mathbf{K}_{n\omega}(\mathbf{a})\bm{\lambda}_n = \sum_{k=-2N}^{2N}\frac{\partial\mathcal{P}}{\partial q_k}\mathbf{c}_{k-n}(\mathbf{f}).
\end{equation}
Then, the sensitivities are evaluated as $\frac{\partial p[\mathbf{c(v)}]}{\partial a_j}=\sum_{n=-N}^{N}\bm{\lambda}_n^T\frac{\partial\mathbf{K}_{n\omega}(\mathbf{a})}{\partial a_j}\mathbf{c}_n(v)$.

If $\mathbf{a}$ satisfies the LMI $\begin{pmatrix}
    \mathbf{X} & \mathbf{F}\\
    \mathbf{F}^*& \mathbf{L}_{N,\omega}(\mathbf{a})
\end{pmatrix}\succeq 0$, it is guaranteed that the right hand side of the adjoint model \eqref{eqn:adjoint_model} is in the range of $\mathbf{K}_{n\omega}(\mathbf{a})$.

\end{appendices}

\section*{Statements and Declarations}
\subsection*{Funding}
This work is funded by Czech Science Foundation (grant number 22-15524S).
\subsection*{Competing interests}
The authors declare that they have no conflict of interest.

\bibliographystyle{abbrvnat}
\bibliography{reference}

\begin{thebibliography}{32}
\providecommand{\natexlab}[1]{#1}
\providecommand{\url}[1]{\texttt{#1}}
\expandafter\ifx\csname urlstyle\endcsname\relax
  \providecommand{\doi}[1]{doi: #1}\else
  \providecommand{\doi}{doi: \begingroup \urlstyle{rm}\Url}\fi

\bibitem[Achtziger and Kočvara(2008)]{achtziger_structural_2008}
W.~Achtziger and M.~Kočvara.
\newblock Structural topology optimization with eigenvalues.
\newblock \emph{SIAM J. Optim.}, 18\penalty0 (4):\penalty0 1129--1164, 2008.
\newblock ISSN 1052-6234.
\newblock \doi{10.1137/060651446}.

\bibitem[Achtziger et~al.(1992)Achtziger, Bends{\o}e, Ben-Tal, and
  Zowe]{Achtziger1992}
W.~Achtziger, M.~P. Bends{\o}e, A.~Ben-Tal, and J.~Zowe.
\newblock Equivalent displacement based formulations for maximum strength truss
  topology design.
\newblock \emph{Impact Comput. Sci. Eng.}, 4\penalty0 (4):\penalty0 315--345,
  1992.
\newblock \doi{10.1016/0899-8248(92)90005-s}.

\bibitem[Agrawal et~al.(2020)Agrawal, Barratt, Boyd, Busseti, and
  Moursi]{agrawal2020differentiating}
A.~Agrawal, S.~Barratt, S.~Boyd, E.~Busseti, and W.~M. Moursi.
\newblock Differentiating through a cone program, 2020.

\bibitem[ApS(2023)]{mosek}
M.~ApS.
\newblock \emph{MOSEK Optimizer API for Python}, 2023.
\newblock URL \url{http://docs.mosek.com/10.0/pythonapi/index.html}.

\bibitem[Aroztegui and Pessoa(2024)]{Aroztegui2024}
J.~M. Aroztegui and A.~Pessoa.
\newblock A cutting plane approach to maximization of fundamental frequency in
  truss topology optimization.
\newblock \emph{Struct. Multidiscipl. Optim.}, 67\penalty0 (4), 2024.
\newblock ISSN 1615-1488.
\newblock \doi{10.1007/s00158-024-03778-y}.

\bibitem[Ben-Tal and Bends\o{}e(1993)]{BentalBendsoe1993}
A.~Ben-Tal and M.~P. Bends\o{}e.
\newblock A new method for optimal truss topology design.
\newblock \emph{SIAM Journal on Optimization}, 3\penalty0 (2):\penalty0
  322--358, 1993.
\newblock \doi{10.1137/0803015}.
\newblock URL \url{https://doi.org/10.1137/0803015}.

\bibitem[Ben-Tal and Nemirovski(1997)]{BenTal1997}
A.~Ben-Tal and A.~Nemirovski.
\newblock Robust truss topology design via semidefinite programming.
\newblock \emph{{SIAM} J. Optim.}, 7\penalty0 (4):\penalty0 991--1016, 1997.
\newblock \doi{10.1137/s1052623495291951}.

\bibitem[Ben-Tal and Nemirovski(2001{\natexlab{a}})]{Ben_Tal_2001}
A.~Ben-Tal and A.~Nemirovski.
\newblock \emph{Lectures on modern convex optimization}.
\newblock Society for Industrial and Applied Mathematics, 2001{\natexlab{a}}.
\newblock \doi{10.1137/1.9780898718829}.

\bibitem[Ben-Tal and Nemirovski(2001{\natexlab{b}})]{ben-tal_lectures_2001}
A.~Ben-Tal and A.~Nemirovski.
\newblock \emph{Lectures on modern convex optimization}.
\newblock {MOS}-{SIAM} {Series} on {Optimization}. Society for Industrial and
  Applied Mathematics, 2001{\natexlab{b}}.
\newblock ISBN 978-0-89871-491-3.
\newblock \doi{10.1137/1.9780898718829}.

\bibitem[Boyd and Vandenberghe(2004)]{boyd_convex_2004}
S.~Boyd and L.~Vandenberghe.
\newblock \emph{Convex {Optimization}}.
\newblock Cambridge University Press, 2004.
\newblock ISBN 9780511804441.
\newblock \doi{10.1017/CBO9780511804441}.

\bibitem[Diamond and Boyd(2016)]{diamond2016cvxpy}
S.~Diamond and S.~Boyd.
\newblock {CVXPY}: {A} {P}ython-embedded modeling language for convex
  optimization.
\newblock \emph{J. Mach. Learn. Res.}, 17\penalty0 (83):\penalty0 1--5, 2016.

\bibitem[Dorn et~al.(1964)Dorn, Gomory, and Greenberg]{Dorn1964}
W.~S. Dorn, R.~E. Gomory, and H.~J. Greenberg.
\newblock {Automatic design of optimal structures}.
\newblock \emph{J. Mec.}, 3\penalty0 (1):\penalty0 25--52, 1964.

\bibitem[Dumitrescu(2017)]{dumitrescu_positive_2017}
B.~Dumitrescu.
\newblock \emph{Positive trigonometric polynomials and signal processing
  applications}.
\newblock Signals and {Communication} {Technology}. Springer International
  Publishing, Cham, 2017.
\newblock ISBN 978-3-319-53688-0.
\newblock \doi{10.1007/978-3-319-53688-0}.

\bibitem[Heidari et~al.(2009)Heidari, Cogill, Allaire, and
  Sheth]{heidari_optimization_2009}
M.~Heidari, R.~Cogill, P.~Allaire, and P.~Sheth.
\newblock Optimization of peak power in vibrating structures via semidefinite
  programming.
\newblock In \emph{50th {AIAA}/{ASME}/{ASCE}/{AHS}/{ASC} {Structures},
  {Structural} {Dynamics}, and {Materials} {Conference}}. American Institute of
  Aeronautics and Astronautics, 2009.
\newblock ISBN 978-1-60086-975-4.
\newblock \doi{10.2514/6.2009-2181}.

\bibitem[Kang et~al.(2006)Kang, Park, and Arora]{Kang2006}
B.-S. Kang, G.-J. Park, and J.~S. Arora.
\newblock A review of optimization of structures subjected to transient loads.
\newblock \emph{Struct. Multidiscipl. Optim.}, 31\penalty0 (2):\penalty0
  81--95, 2006.
\newblock \doi{10.1007/s00158-005-0575-4}.

\bibitem[Kim et~al.(2011)Kim, Kojima, Mevissen, and
  Yamashita]{kim_exploiting_2011}
S.~Kim, M.~Kojima, M.~Mevissen, and M.~Yamashita.
\newblock Exploiting sparsity in linear and nonlinear matrix inequalities via
  positive semidefinite matrix completion.
\newblock \emph{Math. Program.}, 129\penalty0 (1):\penalty0 33--68, 2011.
\newblock ISSN 1436-4646.
\newblock \doi{10.1007/s10107-010-0402-6}.

\bibitem[Kočvara(2021)]{kocvara_decomposition_2021}
M.~Kočvara.
\newblock Decomposition of arrow type positive semidefinite matrices with
  application to topology optimization.
\newblock \emph{Math. Program.}, 190\penalty0 (1):\penalty0 105--134, Nov.
  2021.
\newblock ISSN 1436-4646.
\newblock \doi{10.1007/s10107-020-01526-w}.

\bibitem[Liu et~al.(2015{\natexlab{a}})Liu, Zhang, and Gao]{Liu2015}
H.~Liu, W.~Zhang, and T.~Gao.
\newblock Structural topology optimization under rotating load.
\newblock \emph{Struct. Multidiscipl. Optim.}, 53\penalty0 (4):\penalty0
  847--859, 2015{\natexlab{a}}.
\newblock \doi{10.1007/s00158-015-1356-3}.

\bibitem[Liu et~al.(2015{\natexlab{b}})Liu, Zhang, and Gao]{Liu2015rev}
H.~Liu, W.~Zhang, and T.~Gao.
\newblock A comparative study of dynamic analysis methods for structural
  topology optimization under harmonic force excitations.
\newblock \emph{Struct. Multidiscipl. Optim.}, 51\penalty0 (6):\penalty0
  1321--1333, 2015{\natexlab{b}}.
\newblock \doi{10.1007/s00158-014-1218-4}.

\bibitem[Liu et~al.(2008)Liu, Yan, and Cheng]{Liu2008}
L.~Liu, J.~Yan, and G.~Cheng.
\newblock Optimum structure with homogeneous optimum truss-like material.
\newblock \emph{Comput. Struct.}, 86\penalty0 (13-14):\penalty0 1417--1425,
  2008.
\newblock \doi{10.1016/j.compstruc.2007.04.030}.

\bibitem[Lobo et~al.(1998)Lobo, Vandenberghe, Boyd, and Lebret]{Lobo_1998}
M.~S. Lobo, L.~Vandenberghe, S.~Boyd, and H.~Lebret.
\newblock Applications of second-order cone programming.
\newblock \emph{Linear Alg. Appl.}, 284\penalty0 (1-3):\penalty0 193--228,
  1998.
\newblock \doi{10.1016/s0024-3795(98)10032-0}.

\bibitem[Ma et~al.(1993)Ma, Kikuchi, and Hagiwara]{Ma1993}
Z.~D. Ma, N.~Kikuchi, and I.~Hagiwara.
\newblock Structural topology and shape optimization for a frequency response
  problem.
\newblock \emph{Comput. Mech.}, 13\penalty0 (3):\penalty0 157--174, 1993.
\newblock \doi{10.1007/bf00370133}.

\bibitem[Michell(1904)]{Michell_1904}
A.~Michell.
\newblock The limits of economy of material in frame-structures.
\newblock \emph{Lond. Edinb. Dubl. Phil. Mag.}, 8\penalty0 (47):\penalty0
  589--597, 1904.
\newblock \doi{10.1080/14786440409463229}.

\bibitem[Nishioka et~al.(2023)Nishioka, Toyoda, Tanaka, and
  Kanno]{nishioka2023}
A.~Nishioka, M.~Toyoda, M.~Tanaka, and Y.~Kanno.
\newblock On a minimization problem of the maximum generalized eigenvalue:
  properties and algorithms, 2023.
\newblock URL \url{https://arxiv.org/abs/2312.01603}.

\bibitem[Ohsaki et~al.(1999)Ohsaki, Fujisawa, Katoh, and Kanno]{Ohsaki1999}
M.~Ohsaki, K.~Fujisawa, N.~Katoh, and Y.~Kanno.
\newblock Semi-definite programming for topology optimization of trusses under
  multiple eigenvalue constraints.
\newblock \emph{Comput. Methods Appl. Mech. Eng.}, 180\penalty0 (1-2):\penalty0
  203--217, 1999.
\newblock \doi{10.1016/s0045-7825(99)00056-0}.

\bibitem[Silva et~al.(2019)Silva, Neves, and Lenzi]{silva_critical_2019}
O.~M. Silva, M.~M. Neves, and A.~Lenzi.
\newblock A critical analysis of using the dynamic compliance as objective
  function in topology optimization of one-material structures considering
  steady-state forced vibration problems.
\newblock \emph{J. Sound Vib.}, 444:\penalty0 1--20, 2019.
\newblock ISSN 0022-460X.
\newblock \doi{10.1016/j.jsv.2018.12.030}.

\bibitem[Tyburec et~al.(2019)Tyburec, Zeman, Nov{\'{a}}k, Lep{\v{s}},
  Plach{\'{y}}, and Poul]{Tyburec2019}
M.~Tyburec, J.~Zeman, J.~Nov{\'{a}}k, M.~Lep{\v{s}}, T.~Plach{\'{y}}, and
  R.~Poul.
\newblock Designing modular 3d printed reinforcement of wound composite hollow
  beams with semidefinite programming.
\newblock \emph{Mater. Des.}, 183:\penalty0 108131, 2019.
\newblock \doi{10.1016/j.matdes.2019.108131}.

\bibitem[Tyburec et~al.(2021)Tyburec, Zeman, Kru{\v{z}}{\'i}k, and
  Henrion]{tyburec_global_2021}
M.~Tyburec, J.~Zeman, M.~Kru{\v{z}}{\'i}k, and D.~Henrion.
\newblock Global optimality in minimum compliance topology optimization of
  frames and shells by moment-sum-of-squares hierarchy.
\newblock \emph{Struct. Multidiscipl. Optim.}, 64\penalty0 (4):\penalty0
  1963--1981, 2021.
\newblock ISSN 1615-1488.
\newblock \doi{10.1007/s00158-021-02957-5}.

\bibitem[Vandenberghe and Boyd(1996)]{Vandenberghe_1996}
L.~Vandenberghe and S.~Boyd.
\newblock Semidefinite programming.
\newblock \emph{{SIAM} Rev.}, 38\penalty0 (1):\penalty0 49--95, 1996.
\newblock \doi{10.1137/1038003}.

\bibitem[Venini(2016)]{Venini2016}
P.~Venini.
\newblock Dynamic compliance optimization: Time vs frequency domain strategies.
\newblock \emph{Comput. Struct.}, 177:\penalty0 12--22, 2016.
\newblock \doi{10.1016/j.compstruc.2016.07.012}.

\bibitem[Wolkowicz et~al.(2000)Wolkowicz, Saigal, Vandenberghe, and
  Hillier]{wolkowicz_handbook_2000}
H.~Wolkowicz, R.~Saigal, L.~Vandenberghe, and F.~S. Hillier, editors.
\newblock \emph{Handbook of semidefinite programming}, volume~27 of
  \emph{International {Series} in {Operations} {Research} \& {Management}
  {Science}}.
\newblock Springer US, 2000.
\newblock ISBN 978-1-4615-4381-7.
\newblock \doi{10.1007/978-1-4615-4381-7}.

\bibitem[Zhang and Kang(2015)]{Zhang2015}
X.~Zhang and Z.~Kang.
\newblock Topology optimization of magnetorheological fluid layers in sandwich
  plates for semi-active vibration control.
\newblock \emph{Smart. Mater. Struct.}, 24\penalty0 (8):\penalty0 085024, 2015.
\newblock \doi{10.1088/0964-1726/24/8/085024}.

\end{thebibliography}

\end{document}